\newtheorem{thm}{Theorem}[subsection]
\newtheorem{lem}[thm]{Lemma}
\newtheorem{prop}[thm]{Proposition}
\newtheorem{cor}[thm]{Corollary}
\newtheorem*{conjA}{Conjecture A}
\newtheorem*{questionB}{Question B}
\newtheorem*{propositionC}{Proposition C}
\newtheorem*{theoremD}{Theorem D}
\theoremstyle{definition}
\newtheorem{rem}[thm]{Remark}
\newtheorem{definition}[thm]{Definition}
\newtheorem{example}[thm]{Example}
\newtheorem*{acknowledgements}{Acknowledgements}
\newtheorem*{noteIT}{Note on the computational results given in \cite{I-T}}
\newcommand\Gal{\mathrm{Gal}}
\newcommand\kernel{\mathop{\mathrm{ker}}}
\newcommand\rkzp{\mathop{\mathrm{rank}_{\mathbb{Z}_p}}}
\newcommand\charideal{\mathrm{char}_{\Lambda}}
\begin{document}

\title{On the structure of the Galois group of the 
maximal pro-$p$ extension with restricted ramification over the 
cyclotomic $\mathbb{Z}_p$-extension}
\footnote[0]{2010 Mathematical Subject Classification. 11R23.}
\author{Tsuyoshi Itoh}
\date{\today}

\maketitle

\begin{abstract}
Let $k_\infty$ be the cyclotomic $\mathbb{Z}_p$-extension of an algebraic number field $k$.
We denote by $S$ a finite set of prime numbers which does not contain $p$, and 
$S(k_\infty)$ the set of primes of $k_\infty$ lying above $S$.
In the present paper, we will study the structure of the Galois group $\mathcal{X}_S (k_\infty)$ 
of the maximal pro-$p$ extension unramified outside $S (k_\infty)$ over $k_\infty$.
We mainly consider the question whether $\mathcal{X}_S (k_\infty)$ is 
a non-abelian free pro-$p$ group or not.
In the former part, we treat the case when $k$ is an imaginary quadratic field and $S = \emptyset$ 
(here $p$ is an odd prime 
number which does not split in $k$).
In the latter part, we treat the case when $k$ is a totally real field and $S \neq \emptyset$.
\end{abstract}

\section{Introduction}\label{intro}

\subsection{Maximal unramified pro-$p$ extension over the cyclotomic $\mathbb{Z}_p$-extension}
Let $p$ be an arbitrary prime number, and 
$k_\infty /k$ the cyclotomic $\mathbb{Z}_p$-extension 
of an algebraic number field.
We denote by $\mathcal{L} (k_\infty)/k_\infty$ the 
maximal unramified pro-$p$ extension, and
put $\mathcal{X} (k_\infty) = \Gal (\mathcal{L} (k_\infty)/k_\infty)$.
For the structure of $\mathcal{X} (k_\infty)$, there are many studies 
(\cite{Okano}, \cite{Oza07}, \cite{Sharifi08}, 
\cite{Miz10}, \cite{M-O10}, \cite{Fuj11}, etc.).
Concerning this, the following conjecture is considered 
(see, e.g., \cite[p.298]{Sharifi08}, \cite[p.101]{Fuj11}). 

\begin{conjA}
Let the notation be as above.
Then $\mathcal{X} (k_\infty)$ is not a ``non-abelian free pro-$p$ group''.
\end{conjA}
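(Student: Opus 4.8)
The plan is to argue by contradiction: suppose $\mathcal{X}:=\mathcal{X}(k_\infty)$ is a free pro-$p$ group of rank $d\geq 2$. The first step is to pin down its abelianization. By class field theory in the limit, $\mathcal{X}^{\mathrm{ab}}$ is the classical Iwasawa module $X_\infty=\varprojlim_{n}A_{n}$, where $A_{n}$ is the $p$-part of the ideal class group of the $n$-th layer $k_{n}$; by Iwasawa's theorem $X_\infty$ is a finitely generated \emph{torsion} $\Lambda$-module, $\Lambda=\mathbb{Z}_{p}[[T]]$. Freeness forces $X_\infty$ to be $\mathbb{Z}_{p}$-torsion-free, and for a finitely generated torsion $\Lambda$-module this in turn forces $\mu=0$ and the absence of a nonzero finite $\Lambda$-submodule; the structure theorem then embeds $X_\infty$ into a finitely generated free $\mathbb{Z}_{p}$-module, so $X_\infty\cong\mathbb{Z}_{p}^{d}$ with $d<\infty$, $\mu(k_\infty)=0$, $\lambda(k_\infty)=d\geq 2$, and $\Gamma=\Gal(k_\infty/k)$ acts on $\mathbb{Z}_{p}^{d}$ with the characteristic polynomial of $\gamma-1$ distinguished of degree $d$. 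So it suffices to derive a contradiction from this configuration.

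The second step is to climb the tower of unramified layers. Every open normal subgroup $U\leq\mathcal{X}$ of index $p^{j}$ is $\mathcal{X}(K_\infty)=\Gal(\mathcal{L}(k_\infty)/K_\infty)$ for the field $K_\infty=\mathcal{L}(k_\infty)^{U}$, an unramified $p$-extension of $k_\infty$ of degree $p^{j}$ which is again a cyclotomic $\mathbb{Z}_{p}$-extension; here one uses that $\mathcal{L}(K_\infty)=\mathcal{L}(k_\infty)$, since ``unramified everywhere'' is transitive in towers. By the Nielsen--Schreier formula for pro-$p$ groups $U$ is free of rank $p^{j}(d-1)+1$, so the first step applied to $K_\infty$ gives $\mu(K_\infty)=0$ and $\lambda(K_\infty)=p^{j}(d-1)+1$. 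In other words, if $\mathcal{X}$ were non-abelian free, the Iwasawa $\lambda$-invariant would grow \emph{strictly linearly, with positive slope $d-1$}, along the directed family of finite unramified $p$-extensions of $k_\infty$ inside $\mathcal{L}(k_\infty)$. (One can also feed in the surjections $\mathcal{X}(k_\infty)\twoheadrightarrow\mathcal{X}(k_{n})$ for $n\gg 0$, which exhibit each $\mathcal{X}(k_{n})$ as a $d$-generated pro-$p$ group, but the linear growth of $\lambda$ is the sharper fact.)

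The final step is to contradict this growth, and this is where I expect the real difficulty to lie. The natural tool is an Iwasawa--Kida type Riemann--Hurwitz formula: for a $p$-extension $K_\infty/k_\infty$ of cyclotomic $\mathbb{Z}_{p}$-extensions with $\mu=0$, the quantity $\lambda(K_\infty)$ should be governed by $[K_\infty:k_\infty]$, $\lambda(k_\infty)$, and ramification contributions supported at primes not lying above $p$; in the unramified case those contributions vanish, so $\lambda(K_\infty)$ cannot exceed the resulting genus-theoretic bound, which is incompatible with $\lambda(K_\infty)=p^{j}(d-1)+1$ holding for every $j$ (already $p^{j}\lambda(k_\infty)=p^{j}(d-1)+1$ is impossible for $j\geq 1$). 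The obstacle is that a Riemann--Hurwitz formula of this strength, valid for \emph{arbitrary} number fields $k$ --- not merely CM fields, and with no Greenberg-type vanishing hypothesis --- is not available, which is precisely why Conjecture A is still open. In the two settings of the present paper this is circumvented by explicit computation: when $k$ is imaginary quadratic, $S=\varnothing$, and $p$ is non-split, the module $X_\infty$ is under tight control (a single, totally ramified prime above $p$; no nonzero finite $\Lambda$-submodule; workable genus formulas for $X_\infty$ and for the next layer $\gamma_{2}(\mathcal{X})/\gamma_{3}(\mathcal{X})\cong\wedge^{2}_{\mathbb{Z}_{p}}X_\infty$), so that whenever $\lambda\geq 2$ one produces a nontrivial relation in $\mathcal{X}(k_\infty)$ outright; and the companion results for $k$ totally real with $S\neq\varnothing$ handle the corresponding question for $\mathcal{X}_{S}(k_\infty)$ by similarly explicit means.
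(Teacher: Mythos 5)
The statement you were asked to prove is Conjecture~A, which the paper does not prove and explicitly presents as open; the paper only establishes it under additional hypotheses (Theorems \ref{thm_im1} and \ref{thm_im2} for imaginary quadratic $k$ with $p$ non-split, and the analogues for $\mathcal{X}_S$ over totally real fields). So there is no complete proof to compare against, and you are right to say that your third step cannot be carried out in general. Your first two steps are sound and do match the paper's engine: the paper's proofs of Theorems \ref{thm_im1} and \ref{thm_im2} begin exactly as you do, using $\mathcal{L}(K_\infty)=\mathcal{L}(k_\infty)$ for an unramified subextension and Lemma \ref{pro-ell}(i) (the Schreier index formula) to force $\lambda(K)=p\lambda(k)-p+1$, together with the fact that freeness forces $X(K_\infty)$ to be $\mathbb{Z}_p$-free (which is how Theorem \ref{thm_im2} gets its contradiction, by producing a nontrivial finite $\Lambda$-submodule of $X(F_\infty)$ via the bound $|A(F_n)^{\Gamma_n}|\leq |A(F)|\cdot 3^{e(F)-1}$ and Ozaki's capitulation result).

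The genuine flaw is in your final step, beyond the honest admission that no general Riemann--Hurwitz formula exists: even where such a formula \emph{is} available it does not give the contradiction you want. Kida's formula for an unramified $p$-extension of CM fields with $\mu=0$ reads $\lambda^- (K_\infty)-\delta=[K_\infty:k_\infty]\,(\lambda^-(k_\infty)-\delta)$, which for $\delta=1$ is precisely the shape $p^{j}(\lambda-1)+1$ of the Schreier formula; the genus-theoretic growth of $\lambda$ is therefore \emph{consistent} with freeness, not incompatible with it, and this is exactly why Conjecture~A is hard --- abelianization data alone cannot detect a relation. The paper's actual mechanism in Theorem \ref{thm_im1} is different: it exploits the dihedral symmetry of $K/\mathbb{Q}$ to split $X(K_\infty)=X(K_\infty)^+\oplus X(K_\infty)^-$ with $X(K_\infty)^+\cong X(F_\infty)$, and uses the Konomi-type homomorphism $\phi\colon X(K_\infty)^-\to X(K_\infty)^+$, $x\mapsto (\sigma^{(p-1)/2}-\sigma^{(p+1)/2})x$, whose kernel lies in $X(K_\infty)^{\langle\sigma\rangle}$ and hence has $\mathbb{Z}_p$-rank at most $\lambda(k)$. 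This yields $\lambda(K)\leq\lambda(k)+2\lambda(F)$, and only under the \emph{hypothesis} $2\lambda(F)\leq(p-1)\lambda(k)-p$ does this contradict $\lambda(K)=p\lambda(k)-p+1$. Without some such extra input bounding the plus part, no contradiction is obtainable along these lines, so your sketch, as you suspected, cannot close.
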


At this time, there seems to be no counter-examples for Conjecture A 
(see also \cite[p.32, Remark]{Sharifi07}).
In particular, when $p$ splits completely in $k$,  
``Greenberg's generalized conjecture'' \cite[Conjecture (3.5)]{Gre01} for $k$ and $p$
implies Conjecture A for $k$ and $p$ (see \cite[Main Theorem 1.2]{Fuj11}).

Let $L (k_\infty)/k_\infty$ be the maximal unramified abelian pro-$p$ extension.
Then, $X(k_\infty) = \Gal (L (k_\infty)/k_\infty)$ is a fundamental research 
object in Iwasawa theory.
This is also the maximal abelian pro-$p$ quotient of $\mathcal{X} (k_\infty)$, and 
is an important object to study Conjecture A.
Especially, Conjecture A holds when $X(k_\infty)$ has a non-trivial $\mathbb{Z}_p$-torsion 
element (see also \cite[p.104]{Fuj11}).

When $p=2$, M.~Ozaki announced that Conjecture A holds for 
most imaginary quadratic fields.
Inspired by his study, the author conducted the study of the present paper.
(Several ideas in the present paper were learned from Ozaki's talk.)

\subsection{Criteria for imaginary quadratic fields}
Let $k$ be an imaginary quadratic field and $p$ an odd prime number.
Assume that $k$ and $p$ satisfy both of the following conditions:

\begin{itemize}
\item[(C1)] $p$ does not split in $k$,
\item[(C2)] the class number $h(k)$ of $k$ is divisible by $p$.
\end{itemize}

\noindent 
We recall some properties of $X(k_\infty)$ in this case.
Denote by $\lambda (k)$ the Iwasawa $\lambda$-invariant of $k_\infty/k$. 
Since $p$ is odd, 
it is well known that $X(k_\infty)$ is isomorphic to $\mathbb{Z}_p^{\oplus \lambda(k)}$ as a 
$\mathbb{Z}_p$-module (see, e.g., \cite[p.82]{Oza07}, \cite{Was}).
Thus, to verify Conjecture A, we need more information.
Since $k$ satisfies (C2), we see that $\lambda (k) \geq 1$.
(We note that $\mathcal{X} (k_\infty)$ is an abelian free pro-$p$ group 
when $\lambda(k)$ is $0$ or $1$.)

Let $K/k$ be an unramified cyclic extension of degree $p$.
By considering the action of $\Gal(k/\mathbb{Q})$ on 
the Sylow $p$-subgroup of the ideal class group of $k$, we can show that 
$K$ is a Galois extension over $\mathbb{Q}$, and $\Gal (K/\mathbb{Q})$ is 
isomorphic to the dihedral group of order $2p$. 
Let $F$ be an intermediate field of $K/\mathbb{Q}$ of degree $p$.
We denote by $\lambda (F)$ the Iwasawa $\lambda$-invariant of 
the cyclotomic $\mathbb{Z}_p$-extension $F_\infty/F$.
We can obtain the following criterion for Conjecture A 
(the proof will be given in Section \ref{proof_thm_im1}).

\begin{thm}\label{thm_im1}
Let $k$ be an imaginary quadratic field and $p$ an odd prime number.
Assume that $k$ and $p$ satisfy (C1) and (C2). 
Let $K/k$ be an unramified cyclic extension of degree $p$, and 
$F$ an intermediate field of $K/\mathbb{Q}$ of degree $p$.
Assume also that $\lambda (k) \geq 2$.
If 
\[ \lambda (F) \leq \dfrac{(p-1) \lambda (k) - p}{2}, \] 
then $\mathcal{X} (k_\infty)$ is not a free pro-$p$ group.
\end{thm}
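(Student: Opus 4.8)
The plan is to argue by contradiction, combining the Schreier formula for free pro-$p$ groups with a genus-theoretic computation of $\lambda(K)$. Suppose $\mathcal{X}(k_\infty)$ is a free pro-$p$ group. Since $X(k_\infty)\cong\mathbb{Z}_p^{\oplus\lambda(k)}$ is the maximal abelian pro-$p$ quotient of $\mathcal{X}(k_\infty)$, the minimal number of topological generators of $\mathcal{X}(k_\infty)$ equals $\dim_{\mathbb{F}_p}X(k_\infty)/pX(k_\infty)=\lambda(k)$, so $\mathcal{X}(k_\infty)$ is free of rank $\lambda(k)\geq 2$ (hence non-abelian free). Because $K/k$ is unramified, $K_\infty:=Kk_\infty$ is unramified over $k_\infty$; since every prime of $k$ above $p$ ramifies in $k_\infty/k$ but not in $K/k$, we have $K\cap k_\infty=k$, whence $[K_\infty:k_\infty]=p$ and $K_\infty\subseteq\mathcal{L}(k_\infty)$. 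Moreover $\mathcal{L}(K_\infty)=\mathcal{L}(k_\infty)$, because any unramified pro-$p$ extension of $K_\infty$ is automatically pro-$p$ and unramified over $k_\infty$ (as $K_\infty/k_\infty$ is a finite unramified $p$-extension). Thus $\mathcal{X}(K_\infty)=\Gal(\mathcal{L}(k_\infty)/K_\infty)$ is an open subgroup of index $p$ of $\mathcal{X}(k_\infty)$, so by Schreier's formula for finitely generated free pro-$p$ groups it is free of rank $p(\lambda(k)-1)+1=p\lambda(k)-p+1$. Passing to abelianizations, $X(K_\infty)\cong\mathbb{Z}_p^{\oplus(p\lambda(k)-p+1)}$; in particular $\lambda(K)=p\lambda(k)-p+1$, and $X(K_\infty)$ (hence also $X(F_\infty)$, which injects into it up to $2$-torsion) is finitely generated over $\mathbb{Z}_p$.

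The heart of the argument is then the identity $\lambda(K)=\lambda(k)+2\lambda(F)$. Put $V=X(K_\infty)\otimes_{\mathbb{Z}_p}\mathbb{Q}_p$, a module over $\mathbb{Q}_p[G]$ with $G=\Gal(K/\mathbb{Q})$ dihedral of order $2p$; write $\Delta=\Gal(K/k)$ (of order $p$) and let $c\in G$ be complex conjugation, so $c$ is a reflection with $c\delta c^{-1}=\delta^{-1}$ for $\delta\in\Delta$. Decompose $V=V_0\oplus V_1$ with $V_0=V^{\Delta}=e_0V$ and $V_1=(1-e_0)V$, where $e_0=p^{-1}\sum_{\delta\in\Delta}\delta$. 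Since $K_\infty/k_\infty$ is an \emph{unramified} cyclic extension of degree $p$, the ambiguous class number formula for it has no ramification term and only a bounded unit contribution, and one deduces $\rkzp X(K_\infty)^{\Delta}=\lambda(k)$, i.e. $\dim_{\mathbb{Q}_p}V_0=\lambda(k)$; furthermore $c$ acts on $V_0$ as complex conjugation does on $X(k_\infty)\otimes\mathbb{Q}_p$, namely by $-1$, since $k$ is imaginary quadratic. On the other hand $V_1$ is a module over $\mathbb{Q}_p[\Delta]/(1+\delta+\cdots+\delta^{p-1})\cong\mathbb{Q}_p(\zeta_p)$, and $c$ is semilinear on $V_1$ over the automorphism $\zeta_p\mapsto\zeta_p^{-1}$; as multiplication by $\zeta_p-\zeta_p^{-1}$ is invertible on $V_1$ and interchanges the $\pm1$-eigenspaces of $c$, we get $\dim_{\mathbb{Q}_p}V_1^{+}=\dim_{\mathbb{Q}_p}V_1^{-}=\tfrac12\dim_{\mathbb{Q}_p}V_1$. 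Hence the plus part of $V$ equals $V_1^{+}$, and, using that $[K:K^{+}]=2$ is prime to $p$ together with the fact that $F$ is $G$-conjugate (hence isomorphic) to $K^{+}$, we obtain $\dim_{\mathbb{Q}_p}V_1^{+}=\rkzp X(K_\infty)^{+}=\lambda(K^{+})=\lambda(F)$. Adding up, $\lambda(K)=\dim V_0+\dim V_1=\lambda(k)+2\lambda(F)$.

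Combining the two expressions for $\lambda(K)$ gives $p\lambda(k)-p+1=\lambda(k)+2\lambda(F)$, that is, $\lambda(F)=\dfrac{(p-1)\lambda(k)-p+1}{2}>\dfrac{(p-1)\lambda(k)-p}{2}$, contradicting the hypothesis. Therefore $\mathcal{X}(k_\infty)$ is not a free pro-$p$ group. I expect the main obstacle to be the genus-theoretic identity of the second paragraph: one must verify carefully that the unramifiedness of $K_\infty/k_\infty$ forces $\rkzp X(K_\infty)^{\Delta}$ to equal $\lambda(k)$ with \emph{no} correction term (from units or from archimedean places), since the constant in the statement is sharp and leaves no slack, and one must pin down the action of complex conjugation on the two blocks $V_0$ and $V_1$ and the identification of $V^{+}$ with the plus part attached to $K^{+}$.
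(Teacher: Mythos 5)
Your proof is correct and follows the same two--step architecture as the paper's: first the Schreier index formula applied to the open index-$p$ subgroup $\mathcal{X}(K_\infty)=\Gal(\mathcal{L}(k_\infty)/K_\infty)$ forces $\lambda(K)=p\lambda(k)-p+1$, and then a dihedral computation of $\lambda(K)$ in terms of $\lambda(k)$ and $\lambda(F)$ produces the contradiction. The difference lies in how the second step is packaged. The paper works integrally with the explicit homomorphism $\phi=\sigma^{(p-1)/2}-\sigma^{(p+1)/2}\colon X(K_\infty)^-\to X(K_\infty)^+$ and proves only the inequality $\lambda(K)\le\lambda(k)+2\lambda(F)$, bounding $\rkzp \ker\phi$ by $\rkzp X(K_\infty)^{\langle\sigma\rangle}=\rkzp X(K_\infty)_{\langle\sigma\rangle}\le\lambda(k)$ via the observation that the field cut out by $(\sigma-1)X(K_\infty)$ is abelian and unramified over $k_\infty$. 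You instead decompose $V=X(K_\infty)\otimes\mathbb{Q}_p$ over $\mathbb{Q}_p[\Gal(K/\mathbb{Q})]$ and assert the exact Brauer--type identity $\lambda(K)=\lambda(k)+2\lambda(F)$; your element $\zeta_p-\zeta_p^{-1}$ plays precisely the role of the paper's $\sigma^{(p-1)/2}-\sigma^{(p+1)/2}$ (both are invertible on the non-$\Delta$-invariant part and swap the $\pm1$-eigenspaces of a reflection), so the two computations are really the same mechanism. Your identity is in fact true, but the thinnest point is your justification of $\dim_{\mathbb{Q}_p}V_0=\lambda(k)$: the ambiguous class number formula for the unramified extension $K_n/k_n$ carries a unit-index denominator $(E(k_n):E(k_n)\cap N_{K_n/k_n}K_n^\times)$ whose boundedness in $n$ is not obvious, since the unit rank of $k_n$ grows; ``only a bounded unit contribution'' therefore needs an argument. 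The clean route is the Galois-theoretic one implicit in the paper: the fixed field of $(\sigma-1)X(K_\infty)$ equals $L(k_\infty)$ exactly, so $X(K_\infty)_{\langle\sigma\rangle}\cong\Gal(L(k_\infty)/K_\infty)\cong\mathbb{Z}_p^{\oplus\lambda(k)}$, and the four-term exact sequence transfers this to the invariants. Fortunately your argument only needs the inequalities $\dim V_0\le\lambda(k)$, $\dim V_1^{+}=\dim V_1^{-}$ and $\dim V^{+}\le\lambda(F)$ to get $\lambda(K)\le\lambda(k)+2\lambda(F)$, hence $\lambda(F)\ge\tfrac{(p-1)\lambda(k)-p+1}{2}$, which already contradicts the hypothesis; so the step you flagged as the main obstacle is not load-bearing at the level of equality, and your proof stands.
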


When $p=3$, we can also obtain another criterion for Conjecture A.
Assume that $k$ and $3 (=p)$ satisfy (C1) and (C2).
Then, the above $F$ is a cubic (non-Galois) extension over $\mathbb{Q}$.
We note that the unique prime of $k$ lying above $3$ splits completely in $K$.
From this, we see that there are exactly two primes lying above $3$ in $F$, 
and they are totally ramified in $F_\infty$.
For an integer $n \geq 0$, let $F_n$ be the $n$th layer of $F_\infty/F$ (note that $F_0 =F$).
We denote by $\mathfrak{p}_n$, $\mathfrak{p}'_n$ the primes of $F_n$ lying above $3$.
Let $A(F_n)$ be the Sylow $3$-subgroup of the ideal class group of $F_n$.
We put 
\[ D(F_n) = \langle c(\mathfrak{p}_n), c(\mathfrak{p}'_n) \rangle \cap A(F_n), \]
where $c(\mathfrak{p}_n)$ (resp. $c(\mathfrak{p}'_n)$) is the ideal class containing 
$\mathfrak{p}_n$ (resp. $\mathfrak{p}'_n$).
The second criterion is the following:

\begin{thm}\label{thm_im2}
Assume that $p=3$.
Let $k$ be an imaginary quadratic field satisfying (C1) and (C2), 
$K/k$ an unramified cyclic cubic extension, and 
$F$ an intermediate field of $K/\mathbb{Q}$ of degree $3$.
If $D (F_n)$ is not trivial for some $n \geq 0$, then 
$\mathcal{X} (k_\infty)$ is not a free pro-$3$ group.
\end{thm}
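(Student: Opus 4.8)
The plan is to suppose that $\mathcal{X}(k_\infty)$ is a free pro-$3$ group and derive a contradiction, by descending to $K_\infty := Kk_\infty$, transferring information to $F_\infty$, and then using the hypothesis on $D(F_n)$ to manufacture $\mathbb{Z}_3$-torsion in an Iwasawa module that freeness forces to be torsion-free. We may assume $F$ is the maximal real subfield $K^+$ of $K$, since all cubic subfields of $K$ are conjugate over $\mathbb{Q}$ and hence share all the relevant invariants. First I would record the descent: since $K/k$ is unramified of degree $3=p$, the extension $K_\infty/k_\infty$ is unramified of degree $3$, and because $\Gal(K_\infty/k_\infty)$ is a $3$-group the maximal unramified pro-$3$ extension $\mathcal{L}(k_\infty)$ of $k_\infty$ coincides with $\mathcal{L}(K_\infty)$. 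Thus $\mathcal{X}(K_\infty)=\Gal(\mathcal{L}(k_\infty)/K_\infty)$ is an open subgroup of $\mathcal{X}(k_\infty)$ of index $3$. If $\mathcal{X}(k_\infty)$ were free pro-$3$, then by the Nielsen--Schreier theorem for pro-$p$ groups $\mathcal{X}(K_\infty)$ would be free pro-$3$, so $X(K_\infty)=\mathcal{X}(K_\infty)^{\mathrm{ab}}$ would be a torsion-free $\mathbb{Z}_3$-module.

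Next I would pass from $K_\infty$ to $F_\infty$. The field $K_\infty$ is a CM field with maximal real subfield $F_\infty$ and $[K_\infty:F_\infty]=2$, so since $3$ is odd complex conjugation gives $X(K_\infty)=X(K_\infty)^+\oplus X(K_\infty)^-$. The conorm map $j\colon X(F_\infty)\to X(K_\infty)$ is compatible with the norm maps in the tower, has image contained in $X(K_\infty)^+$, and its composite with the norm $N\colon X(K_\infty)\to X(F_\infty)$ is multiplication by $2\in\mathbb{Z}_3^\times$; hence $j$ is injective and its image is all of $X(K_\infty)^+$. Therefore $X(F_\infty)\cong X(K_\infty)^+$ is a $\mathbb{Z}_3$-direct summand of $X(K_\infty)$, and under the freeness assumption $X(F_\infty)$ is a torsion-free $\mathbb{Z}_3$-module. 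In particular $X(F_\infty)$ then has no nonzero finite $\Lambda$-submodule.

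The crucial step is to show that $D(F_n)\neq 0$ for some $n$ forces $X(F_\infty)$ to contain $\mathbb{Z}_3$-torsion. Because $\mathfrak{p}_n$ and $\mathfrak{p}'_n$ are the unique primes of $F_n$ over $\mathfrak{p}_0$ and $\mathfrak{p}'_0$ (they are totally ramified in $F_\infty/F_0$) and so have residue degree $1$ over the layer below, the norm maps send $c(\mathfrak{p}_{n+1})\mapsto c(\mathfrak{p}_n)$ and $c(\mathfrak{p}'_{n+1})\mapsto c(\mathfrak{p}'_n)$; thus $(c(\mathfrak{p}_n))_n$ and $(c(\mathfrak{p}'_n))_n$ define elements $P,P'$ of $X(F_\infty)=\varprojlim_n A(F_n)$, each fixed by $\Gamma=\Gal(F_\infty/F)$, so that the $\Lambda$-submodule $\langle P,P'\rangle$ is finitely generated over $\mathbb{Z}_3$ (hence $\mathbb{Z}_3$-torsion if and only if finite). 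I would prove that $\langle P,P'\rangle$ is in fact finite, using the relation coming from the principal ideal $(3)=\mathfrak{p}_0^{e_0}(\mathfrak{p}'_0)^{e_0'}$ together with the control-theorem description of $A(F_n)$ as a quotient of $X(F_\infty)$ by $(\gamma-1)$ and by the inertia subgroups attached to $\mathfrak{p}_0,\mathfrak{p}'_0$, which bounds the orders of the classes $c(\mathfrak{p}_n),c(\mathfrak{p}'_n)$. On the other hand, if $D(F_n)\neq 0$ then at least one of $c(\mathfrak{p}_n),c(\mathfrak{p}'_n)$ is nonzero in $A(F_n)$ (otherwise $\langle c(\mathfrak{p}_n),c(\mathfrak{p}'_n)\rangle$, and hence $D(F_n)$, is trivial), so $P\neq 0$ or $P'\neq 0$ and $\langle P,P'\rangle$ is a nonzero finite $\Lambda$-submodule of $X(F_\infty)$. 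This contradicts the conclusion of the previous paragraph, so $\mathcal{X}(k_\infty)$ is not a free pro-$3$ group.

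The main obstacle is the finiteness assertion in the third step: that the submodule of $X(F_\infty)$ generated by the classes of the ramified primes over $3$ is finite, not merely annihilated by $\gamma-1$. (Note that the bare $\lambda$-invariant comparison, as in Theorem \ref{thm_im1}, does not settle the question here, so the finer input from $D(F_n)$ is genuinely needed.) Proving finiteness demands a careful study of $X(F_\infty)$ for the non-Galois real cubic field $F$ carrying its two totally ramified primes over $3$ — pinning down how $\langle P,P'\rangle$ sits inside $X(F_\infty)$ through the exact sequences relating $X(F_\infty)$ to the finite-layer class groups $A(F_n)$, and separating the cases according to whether $3$ is ramified in $F/\mathbb{Q}$. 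It is precisely the wild ramification in $K_\infty/k_\infty$ (degree $3$ at residue characteristic $3$) that makes the integral structure delicate and forces one to track the classes of the ramified primes directly rather than rely on a cleaner semisimple comparison of the modules attached to $k_\infty$, $F_\infty$, and $K_\infty$.
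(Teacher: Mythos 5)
Your first two reductions coincide with the paper's: freeness of $\mathcal{X}(k_\infty)$ forces $X(K_\infty)$, hence its direct summand $X(K_\infty)^{+}\cong X(F_\infty)$, to be $\mathbb{Z}_3$-torsion-free, so it suffices to produce non-trivial torsion in $X(F_\infty)$. One correction here: $F$ is \emph{not} totally real and $K$ is not a CM field ($\Gal(K/\mathbb{Q})\cong S_3$ has trivial center, so complex conjugation cannot be central). The decomposition $X(K_\infty)=X(K_\infty)^{+}\oplus X(K_\infty)^{-}$ is taken with respect to the involution $\tau$ generating $\Gal(K/F)$ and works because $3$ is odd, not because of any CM structure; your conclusion survives, but the justification should be rephrased.

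The genuine gap is exactly where you locate it: the finiteness of $\langle P,P'\rangle$, i.e.\ the boundedness of the orders of the $3$-parts of $c(\mathfrak{p}_n)$, $c(\mathfrak{p}'_n)$ in $A(F_n)$. Your proposed justification --- the relation $(3)=\mathfrak{p}_0^{e_0}(\mathfrak{p}'_0)^{e'_0}$ together with a control-theorem presentation of $A(F_n)$ --- does not suffice: the relation only says $e_0\,c(\mathfrak{p}_0)+e'_0\,c(\mathfrak{p}'_0)=0$, and the ambiguous class number formula for $F_n/F$ (two totally ramified primes) gives $|A(F_n)^{\Gamma_n}|=|A(F)|\cdot 3^{n}/\bigl(E(F):E(F)\cap N_{F_n/F}F_n^\times\bigr)$ up to standard factors, so boundedness forces the unit index to grow like $3^{\,n-c}$ --- an arithmetic input, not a formal consequence. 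The paper supplies it by a Hachimori-type argument: taking the prime $\mathfrak{p}$ above $3$ with $F_\mathfrak{p}=\mathbb{Q}_3$ and a fundamental unit $\varepsilon_F$, one shows $\mathfrak{X}_{\{\mathfrak{p}\}}(F_n)_{\Gamma_n}\cong\mathfrak{X}_{\{\mathfrak{p}\}}(F)$, which is finite of order $|A(F)|\cdot 3^{e(F)-1}$ because the rank-one global units land in the rank-one local units $\mathcal{U}^1_{\mathfrak{p}}(F)\cong\mathbb{Z}_3$ with index $3^{e(F)-1}$; since $A(F_n)$ is a quotient of $\mathfrak{X}_{\{\mathfrak{p}\}}(F_n)$, this bounds $|A(F_n)^{\Gamma_n}|=|A(F_n)_{\Gamma_n}|$ and hence the orders in question. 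This step crucially uses that one prime above $3$ has local degree $1$ and that $E(F)$ has rank $1$; nothing in your sketch engages with either fact. Granting the bound, your endgame is sound and in fact slightly more direct than the paper's: you get $3^N P=0$ with $P\neq 0$ immediately, whereas the paper deduces from boundedness that the classes capitulate (Greenberg's argument) and then invokes Ozaki's result that a non-trivial capitulation kernel forces a non-trivial finite $\Lambda$-submodule of $X(F_\infty)$.
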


The above theorem seems close to Ozaki's result \cite[Proposition 5]{Oza07}.
Note that our proof of the above theorem uses different tools from Ozaki's.
(See also Section \ref{Ozaki}.)
This theorem will be shown in Section \ref{proof_thm_im2}.

\subsection{Several results around Conjecture A}\label{sub_conj1}
In Sections \ref{proof_prop_conj1} and \ref{sec_im_D}, 
we will give results obtained from the study of Conjecture A.

Let $k$ be an imaginary quadratic field and $p$ an odd prime number 
which satisfies (C1) and (C2), and take $K$, $F$ similarly as above.
Separately from the validity of Conjecture A, we are also concerned with 
the structure of $X (F_\infty)$ itself.
We will give a result which relates $X (F_\infty)$ to $\lambda (k)$.
Before stating this, we shall introduce the following additional condition:

\begin{itemize}
\item[(C3)] the Sylow $p$-subgroup $A (k)$ of the ideal class group of $k$ is cyclic.
\end{itemize}

\noindent We will show the following theorem in Section \ref{proof_prop_conj1}.
This theorem can be shown by using Lemmermeyer's results given in \cite{Lem}.

\begin{thm}\label{prop_conj1}
Let $k$ be an imaginary quadratic field and $p$ an odd prime number.
Assume that $k$ and $p$ satisfy (C1), (C2), and (C3).
We denote by $K/k$ the unique unramified cyclic extension of degree $p$, and 
$F$ an intermediate field of $K/\mathbb{Q}$ of degree $p$.
Then, $\lambda (k)=1$ if and only if $X (F_\infty)$ is trivial.
\end{thm}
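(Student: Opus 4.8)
The plan is to translate the statement, layer by layer, into a comparison of the $p$-class groups of $k_n$ and $F_n$ through the dihedral extension $K_n/\mathbb{Q}_n$, and then read off the conclusion from the known Iwasawa structure of $X(k_\infty)$. I would first set up the tower: write $k_n,F_n$ for the $n$-th layers of $k_\infty/k$ and $F_\infty/F$, and $K_n=Kk_n=KF_n$. Since $[F:\mathbb{Q}]=p$ is prime and $F/\mathbb{Q}$ is not abelian, $F\cap\mathbb{Q}_\infty=\mathbb{Q}$; likewise $K\cap\mathbb{Q}_n=\mathbb{Q}$, because a subfield of $K$ abelian over $\mathbb{Q}$ is $\mathbb{Q}$ or $k$ and $[k:\mathbb{Q}]=2$ is not a power of $p$. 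Hence $\Gal(K_n/\mathbb{Q}_n)$ is again dihedral of order $2p$, with quadratic subfield $k_n$ and degree-$p$ subfield $F_n$, and $K_\infty=Kk_\infty$ together with $K/k$ unramified forces $K_\infty/k_\infty$, hence each $K_n/k_n$, to be everywhere unramified of degree $p$. Thus Lemmermeyer's results of \cite{Lem} on $p$-class groups of dihedral extensions apply over the base field $\mathbb{Q}_n$ in exactly the unramified case treated there; I would use them in the form of the dichotomy \emph{$A(F_n)$ is trivial if and only if $A(k_n)$ is cyclic}. (Condition (C3), i.e. cyclicity of $A(k)=A(k_0)$, is what singles out $K$; at higher layers one still obtains this equivalence, since $A(k_n)$ non-cyclic forces $A(F_n)\neq 0$ through Lemmermeyer's class-number/rank relation.)

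Granting the dichotomy, the theorem is a chain of equivalences. Because each $F_{n+1}/F_n$ is ramified at the primes above $p$, the transition (norm) maps $A(F_{n+1})\to A(F_n)$ are surjective, so $X(F_\infty)=\varprojlim_n A(F_n)$ vanishes if and only if $A(F_n)=0$ for all $n$, hence (by the dichotomy) if and only if $A(k_n)$ is cyclic for all $n$. Similarly the norm maps $A(k_{n+1})\to A(k_n)$ are surjective, so $X(k_\infty)=\varprojlim_n A(k_n)$ is an inverse limit of a surjective system of finite abelian $p$-groups, and such a limit is pro-cyclic exactly when every $A(k_n)$ is cyclic. Finally, as recalled in the Introduction, $X(k_\infty)\cong\mathbb{Z}_p^{\oplus\lambda(k)}$, which is pro-cyclic precisely when $\lambda(k)\leq 1$; combined with (C2), which forces $\lambda(k)\geq 1$, pro-cyclicity of $X(k_\infty)$ is equivalent to $\lambda(k)=1$. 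Concatenating: $\lambda(k)=1\iff X(k_\infty)$ pro-cyclic $\iff A(k_n)$ cyclic for all $n\iff A(F_n)=0$ for all $n\iff X(F_\infty)=0$.

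The main obstacle is the input from \cite{Lem}: one must extract the precise dihedral relation among $A(k_n)$, $A(F_n)$ (and $A(K_n)$) in the unramified case, verify that it yields the clean equivalence above, and — crucially — check that its hypotheses genuinely persist at every layer, i.e. that the ambient extension stays dihedral (this is where $F\cap\mathbb{Q}_\infty=\mathbb{Q}$ is used) and that the unit-index and genus terms in Lemmermeyer's class-number formulas do not disturb the conclusion as $n\to\infty$. A secondary, more routine point is the surjectivity of the class-group norm maps along the cyclotomic tower, needed to pass between ``$A(k_n)$ cyclic for all $n$'' and ``$X(k_\infty)$ pro-cyclic''. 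I note that the direction $\lambda(k)=1\Rightarrow X(F_\infty)=0$ can alternatively be obtained without \cite{Lem}: then $\mathcal{X}(k_\infty)\cong\mathbb{Z}_p$, so $\mathcal{L}(k_\infty)/k_\infty$ is the unramified $\mathbb{Z}_p$-extension; since $L(K_\infty)/k_\infty$ is unramified we get $L(K_\infty)\subseteq\mathcal{L}(k_\infty)$ and hence $X(K_\infty)\cong\mathbb{Z}_p$; the infinite dihedral structure of $\Gal(\mathcal{L}(k_\infty)/\mathbb{Q}_\infty)$ shows complex conjugation acts on $X(K_\infty)$ by $-1$, so its fixed part — which is $X(F_\infty)$, as $p$ is odd — vanishes. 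The converse, however, still seems to require the finer information in \cite{Lem}.
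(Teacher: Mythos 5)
Your forward direction is sound: ``$\lambda(k)=1 \Rightarrow X(F_\infty)$ trivial'' follows either from the one-way implication in \cite{Lem} (cyclic $A(k_n)$ forces trivial $A(F_n)$, which is exactly the paper's Proposition C, combined with Fukuda's stabilization theorem), or from your alternative pro-dihedral argument, which is correct and pleasantly avoids \cite{Lem} altogether. The gap is in the converse, and it sits exactly where you flag your own unease. The ``dichotomy'' you want to import from \cite{Lem} --- $A(F_n)$ trivial if and only if $A(k_n)$ cyclic --- is not what Lemmermeyer proves. His Proposition 8.4 gives only ``cyclic $\Rightarrow$ trivial''; the reverse implication (his Theorem 2.1, quoted in the paper as Theorem D) carries the extra hypothesis that the unit norm index is trivial, i.e.\ $N_{F_1/\mathbb{B}_1} E(F_1) = E(\mathbb{B}_1)$. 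Without that, the dihedral rank relation leaves room for $A(k_n)$ to be non-cyclic while $A(F_n)=0$, the discrepancy being absorbed by the unit index. So your chain of equivalences breaks at the step ``$A(F_n)=0$ for all $n$ $\Rightarrow$ $A(k_n)$ cyclic for all $n$''.

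The paper's proof spends essentially all of its effort closing precisely this gap, and only at level $n=1$, which suffices: once $r_p(X(k_1))=r_p(X(k))=1$, Fukuda's theorem \cite[Theorem 1 (2)]{Fuk94} gives that $X(k_\infty)$ is a cyclic $\mathbb{Z}_p$-module, hence $\lambda(k)=1$; no statement at higher layers is needed. Concretely: assuming $X(F_1)$ trivial, $h(F_1)$ is prime to $p$, so the degree-one prime $\mathfrak{p}_{F_1}$ above $p$ satisfies $\mathfrak{p}_{F_1}^h=(\eta_{F_1})$ with $h$ prime to $p$; the unit $u=\eta_{F_1}^{\gamma-1}$ norms down to $v^h\,\varepsilon^{\gamma-1}$, where $v=\eta_{\mathbb{B}_1}^{\gamma-1}$ comes from the cyclotomic unit known to generate $E(\mathbb{B}_1)/E(\mathbb{B}_1)^p$ as a Galois module, and Nakayama's lemma then yields $N_{F_1/\mathbb{B}_1}E(F_1)=E(\mathbb{B}_1)$. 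This computation --- or some substitute for it --- is the missing ingredient in your proposal: ``checking that the unit-index terms do not disturb the conclusion'' is not a side condition to be verified later but the heart of the converse direction.
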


The above theorem slightly relates to our criteria for Conjecture A 
(Theorems \ref{thm_im1} and \ref{thm_im2}).
See the last paragraph of Section \ref{computation_conjA}.

When $p=3$, Theorem \ref{prop_conj1} can be reformulated as the following.
In this case, $F$ is a cubic field (and it is not totally real).
Let $E(F)$ be the group of units in $F$.
Since the free rank of $E(F)$ is $1$,  
we can take a unit $\varepsilon_F$ which satisfies 
$E(F) = \langle -1, \varepsilon_F \rangle$.
As noted before, there are two primes lying above $3$ in $F$.
We also see that one of such a prime $\mathfrak{p}$ satisfies $F_\mathfrak{p} = \mathbb{Q}_3$
(i.e., the local degree at $\mathfrak{p}$ is $1$).
Then we can obtain the following corollary 
by using Hachimori's result given in \cite{Hachi} 
(for the details, see Section \ref{proof_prop_conj1}).

\begin{cor}\label{conj1'}
Let the assumptions be as in Theorem \ref{prop_conj1}.
Moreover, assume that $p=3$.
Then, $\lambda (k)=1$ if and only if $\varepsilon_F^{2} \not\equiv 1 
\pmod{\mathfrak{p}^2}$.
\end{cor}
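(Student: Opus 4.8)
The plan is to derive Corollary~\ref{conj1'} from Theorem~\ref{prop_conj1} combined with a criterion of Hachimori in \cite{Hachi}. By Theorem~\ref{prop_conj1}, under its hypotheses together with $p=3$ one has $\lambda(k)=1$ if and only if $X(F_\infty)$ is trivial, so the whole task is to prove
\[
X(F_\infty)=0 \quad\Longleftrightarrow\quad \varepsilon_F^{2}\not\equiv 1 \pmod{\mathfrak p^{2}}.
\]
First I would make the local picture explicit. Being a non-totally-real cubic field, $F$ has signature $(1,1)$, hence unit rank $1$, and (having a real embedding) its only roots of unity are $\pm1$; this is exactly what makes the choice of $\varepsilon_F$ with $E(F)=\langle-1,\varepsilon_F\rangle$ possible, and, as recalled above, $F$ has exactly two primes $\mathfrak p,\mathfrak p'$ above $3$, both totally ramified in $F_\infty$, with $F_\mathfrak p=\mathbb Q_3$. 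Since $\varepsilon_F$ is a unit we have $\varepsilon_F\equiv\pm1\pmod{\mathfrak p}$, so $\varepsilon_F^{2}\equiv1\pmod{\mathfrak p}$; moreover $\varepsilon_F^{2}\bmod\mathfrak p^{2}$ is unaffected by replacing $\varepsilon_F$ with $-\varepsilon_F$ or with $\varepsilon_F^{-1}$, so the condition is intrinsic, and identifying $F$ with its image in $F_\mathfrak p=\mathbb Q_3$ it states precisely that $\varepsilon_F^{2}$ has nontrivial image in $(1+3\mathbb Z_3)/(1+9\mathbb Z_3)\cong\mathbb Z/3\mathbb Z$, i.e. that the global units already exhaust the principal units of $F_\mathfrak p=\mathbb Q_3$ at the first layer.

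Next I would invoke Hachimori's criterion \cite{Hachi}. For a cubic field of exactly this shape --- two primes above $3$, one of local degree $1$, both totally ramified in the cyclotomic $\mathbb Z_3$-extension --- that result characterises the vanishing of the Iwasawa module of $F_\infty/F$ by precisely the local non-degeneracy of the fundamental unit at $\mathfrak p$ isolated above. Granting this, substituting into the displayed equivalence and then into Theorem~\ref{prop_conj1} gives the corollary. What must be checked en route is that our $F$ actually meets the hypotheses of Hachimori's theorem: the structural facts above supply the prime configuration, while the remaining hypothesis --- a condition forcing $A(F)$ to be small --- is delivered by Lemmermeyer's class-number relations \cite{Lem} for the $S_3$-extension $K/\mathbb Q$ together with (C3), the very same input already used in the proof of Theorem~\ref{prop_conj1}.

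The step I expect to be the real obstacle is reconciling what Hachimori's criterion literally controls with what is needed here: making sure that, for our $F$, it detects $X(F_\infty)=0$ rather than merely $\lambda_3(F)=0$. For this I would use that in the cyclotomic $\mathbb Z_3$-extension of $F$ all primes above $3$ are totally ramified and no other prime ramifies, so the norm maps $A(F_m)\to A(F_n)$ are surjective; since $X(F_\infty)=\varprojlim_n A(F_n)$ with these transition maps and the $A(F_n)$ are finite, $X(F_\infty)=0$ is equivalent to $A(F_n)=0$ for every $n$ (an inverse system of nonempty finite sets with surjective transition maps has surjective projections from its limit). Together with the standard vanishing $\mu_3(F)=0$, this identifies ``$X(F_\infty)=0$'' with the simultaneous vanishing of all $A(F_n)$, which is exactly what Hachimori's congruence governs. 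Once this identification and the hypothesis check of the previous paragraph are in place, the chain
\[
\lambda(k)=1 \iff X(F_\infty)=0 \iff \varepsilon_F^{2}\not\equiv 1 \pmod{\mathfrak p^{2}}
\]
closes up, and Corollary~\ref{conj1'} follows.
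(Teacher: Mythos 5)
Your proposal is correct and follows essentially the same route as the paper: reduce via Theorem \ref{prop_conj1} to the triviality of $X(F_\infty)$, note that $X(F)$ is trivial (Proposition C, from Lemmermeyer), and then apply Hachimori's criterion \cite[Theorem 8.1]{Hachi}, which translates the vanishing of $X(F_\infty)$ into the statement that $\varepsilon_F^2$ topologically generates the principal units of $F_{\mathfrak p}=\mathbb Q_3$, i.e.\ $\varepsilon_F^2\not\equiv 1\pmod{\mathfrak p^2}$. Your extra care about $X(F_\infty)=0$ versus $\lambda_3(F)=0$ is sound but not needed as a separate step, since Hachimori's theorem is stated directly in terms of the triviality of the Iwasawa module.
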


This corollary says that a property of a certain unit of $F$ determines whether
$\lambda (k)=1$ or $\lambda (k)>1$ (when $p=3$).
(Note that there is another criterion.
See \cite[Theorem 1]{Kra-Was}. 
See also \cite{H-W}.)

In Section \ref{sec_im_D}, we consider a question whether  
$\mathcal{X} (k_\infty)$ is a Demu{\v s}kin group or not.
For this question, we will give a result similar to Theorem \ref{thm_im1}.

In Section \ref{sec_computation}, we shall give some computational results.

\subsection{$S$-ramified analog of Conjecture A}
We shall return to the general case.
That is, $p$ is an arbitrary prime number, 
and $k_\infty /k$ the cyclotomic $\mathbb{Z}_p$-extension 
of an algebraic number field.
Let $S$ be a finite set of prime numbers, and $S(k)$ (resp. $S (k_\infty)$) the set of primes of 
$k$ (resp. $k_\infty$) lying above the prime numbers in $S$.
Assume that $S$ does not contain $p$.
Let $\mathcal{L}_S (k_\infty)/k_\infty$ be the maximal pro-$p$ 
extension unramified outside $S(k_\infty)$.
(We mention that $\mathcal{L}_S (k_\infty)/k_\infty$ is unramified at archimedean primes.)
Put $\mathcal{X}_S (k_\infty) = \Gal (\mathcal{L}_S (k_\infty)/k_\infty)$.
In this situation, we will consider a question similar to Conjecture A 
(see also \cite[Section 5]{F-I}).

\begin{questionB} 
Let the notation and the assumption be as above (in particular, $p \not\in S$). 
Can $\mathcal{X}_S (k_\infty)$ be a non-abelian free pro-$p$ group?
\end{questionB}

Let $\mathbb{B}_\infty$ be the cyclotomic $\mathbb{Z}_p$-extension $\mathbb{Q}$.
When $p$ is odd, it is known that if $\mathcal{X}_S (\mathbb{B}_\infty)$ is not trivial, then 
it is not a free pro-$p$ group (\cite[Corollary 5.1]{F-I}).
This result follows from the fact that 
the maximal abelian pro-$p$ quotient of $\mathcal{X}_S (\mathbb{B}_\infty)$ 
has a non-trivial $\mathbb{Z}_p$-torsion element when it is not trivial 
(\cite[Theorem 1.1]{F-I}).
Note also that some partial results for imaginary quadratic fields are given in \cite{F-I}.
Moreover, \cite{BLM} also gave results concerning Question B.
(For the structure of $\mathcal{X}_S (k_\infty)$ and related topics, 
see also \cite{Salle08}, \cite{Salle10}, \cite{M-O13}, \cite{I-M}, \cite{Miz18}, etc.)

In the present paper, we shall consider Question B
when $k'$ is a totally real field.
(We use the symbol $k'$ for a totally real field 
because $k$ is used as an imaginary quadratic field in 
Sections \ref{sec_im1} and \ref{sec_computation}.)
First, we will relate the above question to Greenberg's conjecture (see \cite{Gre76}) 
which states that $X (k'_\infty)$ is finite if $k'$ is totally real.
In Section \ref{sec_real_GC}, we shall show the following result.

\begin{thm}\label{relationship_GC}
Let $p$ be an (arbitrary) prime number,
$k'$ a totally real number field, and 
$S$ a finite set of prime numbers which does not contain $p$.
Assume that there is an infinite Galois extension $\mathbb{L}/k'_\infty$ 
unramified outside $S(k'_\infty)$ such that 
$\mathcal{G}= \Gal (\mathbb{L}/k'_\infty)$ is a finitely generated pro-$p$ group 
and satisfies either of the following conditions:
\begin{itemize}
\item[(F)] $\mathcal{G}$ is a free pro-$p$ group with 
$d (\mathcal{G}) \geq 2$, or 
\item[(D)] $\mathcal{G}$ is a Demu{\v s}kin group with 
$d (\mathcal{G}) \geq 3$. 
\end{itemize}
(Here, $d (\mathcal{G})$ denotes the generator rank of $\mathcal{G}$.)
Then, there is a totally real number field 
$k''$ such that $X (k''_\infty)$ is not finite 
(i.e., a counter-example for Greenberg's conjecture exists).
\end{thm}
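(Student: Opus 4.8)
The plan is to argue by contraposition: assuming Greenberg's conjecture holds for every totally real number field, I will derive that no such $\mathbb{L}/k'_\infty$ with $\mathcal{G}$ of type (F) or (D) can exist. First I would pass to the fixed field of a suitable open subgroup to exploit the hereditary nature of the two classes of groups. If $\mathcal{G}$ is a free pro-$p$ group with $d(\mathcal{G}) \geq 2$, then for every open subgroup $\mathcal{H}$ the Nielsen--Schreier formula gives $d(\mathcal{H}) = 1 + [\mathcal{G}:\mathcal{H}](d(\mathcal{G})-1)$, so $d(\mathcal{H})$ is unbounded as $\mathcal{H}$ ranges over open subgroups; similarly, if $\mathcal{G}$ is Demu{\v s}kin with $d(\mathcal{G}) \geq 3$, then the analogous formula for the generator rank of open subgroups of Demu{\v s}kin groups (which behave like fundamental groups of surfaces, Euler characteristic $2 - d$) shows $d(\mathcal{H})$ is again unbounded. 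In particular, in either case there is an open subgroup $\mathcal{H} = \Gal(\mathbb{L}/M)$, where $M/k'_\infty$ is a finite extension, with $d(\mathcal{H})$ arbitrarily large.

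Next I would produce a totally real field from $M$. The field $M$ is a finite extension of $k'_\infty$, hence contains $k'_\infty$; it need not itself be the cyclotomic $\mathbb{Z}_p$-extension of a number field, but it is a finite extension of one. The key point is that $M/k''_\infty$ for a suitable number field $k''$: since $M/\mathbb{Q}$ is an infinite extension whose only infinite subextension structure comes from $k'_\infty$, one checks that $M = k''_\infty$ where $k''$ is the fixed field in $M$ of the (unique) $\mathbb{Z}_p$ acting, so $k''$ is a finite extension of $k'$ and $M = k''_\infty$ is its cyclotomic $\mathbb{Z}_p$-extension. Since $k'$ is totally real and $k''/k'$ is finite, I must arrange that $k''$ is totally real; if $\mathbb{L}/k'_\infty$ is unramified at archimedean primes (which it is, as the maximal pro-$p$ extension unramified outside $S(k'_\infty)$ is unramified at the infinite places, $p$ being tamely ramified there only if $p = 2$ — and even then the archimedean places of $k'_\infty$ are complex only if... no, $k'_\infty$ is totally real), the intermediate field $k''$ is totally real as well. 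Thus $k''_\infty = M$ and $\mathcal{H} = \Gal(\mathbb{L}/k''_\infty)$ is a quotient of $\mathcal{X}_S(k''_\infty)$.

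Now I invoke the constraint imposed by Greenberg's conjecture. The maximal abelian pro-$p$ quotient $\mathcal{X}_S(k''_\infty)^{\mathrm{ab}}$ surjects onto $\mathcal{H}^{\mathrm{ab}}$, which is $\mathbb{Z}_p^{d(\mathcal{H})}$ in the free case (and $\mathbb{Z}_p^{d(\mathcal{H})-1} \oplus (\text{torsion})$ in the Demu{\v s}kin case, but in any event has $\mathbb{Z}_p$-rank at least $d(\mathcal{H})-1$). On the other hand, for a totally real $k''$ with Greenberg's conjecture holding — so that $X(k''_\infty)$ is finite — there is a bound on the $\mathbb{Z}_p$-rank of $\mathcal{X}_S(k''_\infty)^{\mathrm{ab}}$: by class field theory this rank is controlled by the $S$-ramified Iwasawa module, and its $\Lambda$-rank (equivalently the $\mathbb{Z}_p$-corank over $k''_\infty$) is bounded by $\sum_{v \in S(k''_\infty)} [\,] $ — more precisely by a quantity governed by $r_2(k'') + |S(k'')| + (\text{Greenberg term})$, which for fixed $k'$ grows at most linearly in $[k'':k']$, hence at most linearly in $[\mathcal{G}:\mathcal{H}]$. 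But the free/Demu{\v s}kin formulas make $d(\mathcal{H})$ grow linearly in $[\mathcal{G}:\mathcal{H}]$ with a strictly larger leading coefficient when $d(\mathcal{G}) \geq 2$ (resp. $\geq 3$) — this is exactly why the rank hypotheses are what they are. Choosing $\mathcal{H}$ deep enough forces a contradiction, so Greenberg's conjecture must fail for some such $k''$.

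The main obstacle I anticipate is making the rank comparison in the last step precise: I need a clean upper bound for the $\mathbb{Z}_p$-rank of $\mathcal{X}_S(k''_\infty)^{\mathrm{ab}}$ in terms of $[k'':k']$ that is \emph{genuinely sublinear in the right sense} — i.e., with leading coefficient strictly smaller than $d(\mathcal{G})-1$ — under the assumption that Greenberg's conjecture holds for $k''$. This requires the standard exact sequence relating the $S$-ramified abelian Iwasawa module to the unramified one $X(k''_\infty)$ together with the decomposition groups at $S(k''_\infty)$, plus the observation that when $X(k''_\infty)$ is finite the contribution of the $S$-part is governed by the number and residue degrees of primes above $S$, which is at most $|S| \cdot [k'':\mathbb{Q}]/[\text{something}]$ — careful bookkeeping here, especially tracking how many primes of $k''_\infty$ lie above each prime in $S$ as $k''$ varies through the tower $\mathbb{L}$, is where the argument will need the most care. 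A secondary subtlety is the Demu{\v s}kin case: I should confirm that the Schreier-type index formula for generator rank applies to \emph{every} open subgroup (Demu{\v s}kin groups of rank $\geq 2$ are Poincaré duality groups of dimension $2$, and their open subgroups are again Demu{\v s}kin with the expected Euler-characteristic scaling), so that $d(\mathcal{H}) - 1 = [\mathcal{G}:\mathcal{H}](d(\mathcal{G})-2) + 1$ grows linearly with the required slope once $d(\mathcal{G}) \geq 3$.
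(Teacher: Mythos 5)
Your overall skeleton --- use the Schreier/Euler-characteristic formulas to make the generator rank of $\mathcal{H}=\Gal(\mathbb{L}/\mathbb{M})$ grow linearly in the index, identify $\mathbb{M}$ with $k''_\infty$ for a totally real $k''$ (the paper cites Iwasawa for this), and compare against an upper bound for the $\mathbb{Z}_p$-rank of $\mathcal{H}^{ab}$ coming from class field theory plus finiteness of $X(k''_\infty)$ --- is exactly the paper's. But the step you yourself flag as ``the main obstacle'' is a genuine gap, and it is precisely the step the paper spends two lemmas on. Since each tame prime contributes at most one copy of $\mathbb{Z}_p$ of inertia to $\Gal(\mathbb{L}^{ab}_{\mathbb{M}}/\mathbb{M})$, what you must control is the \emph{number} of primes of $\mathbb{M}$ above $S$ with infinite inertia. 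For an arbitrary chain of open subgroups this number can grow like $[\mathcal{G}:\mathcal{H}]\cdot|S(k'_\infty)|$: a prime can split completely in $\mathbb{M}/k'_\infty$ even though it ramifies deeper in $\mathbb{L}$ (take $\mathcal{H}$ containing the normal closure of its decomposition group), and $|S(k'_\infty)|$ may well exceed $d(\mathcal{G})-1$. So there is no reason the leading coefficient comes out ``strictly smaller,'' and the contradiction cannot be closed by bookkeeping alone.

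The missing idea is to \emph{choose the tower} rather than descend through arbitrary open subgroups. The paper first splits $S(k'_\infty)$ into the set $S_1$ of primes whose inertia becomes isomorphic to $\mathbb{Z}_p$ over some finite subextension and the set $S_2$ of primes whose inertia stays finite, passes to a finite Galois base change $\mathbb{K}^{(0)}/k'_\infty$ realizing this dichotomy uniformly (Lemma \ref{lem_K0}), and then uses Kataoka's lemma to produce a $\mathbb{Z}_p$-extension $\mathbb{K}^{(\infty)}/\mathbb{K}^{(0)}$ inside $\mathbb{L}$ in which \emph{every} prime of $S_1(\mathbb{K}^{(0)})$ has inertia of finite index (Lemma \ref{lem_K_infty}). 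Along the layers $\mathbb{K}^{(n)}$ of this specific tower the $S_1$-primes are eventually non-split, so the number of primes with $\mathbb{Z}_p$-inertia is \emph{bounded} by a constant $C$ independent of $n$, while $d(\Gal(\mathbb{L}/\mathbb{K}^{(n)}))$ grows like $p^n$; taking $p^n>C$ leaves an unramified abelian extension of $\mathbb{K}^{(n)}$ of positive $\mathbb{Z}_p$-rank. Without this construction (or some substitute that forces the relevant primes of $S$ to be finitely decomposed along your chain), the rank comparison in your last step fails.
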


In particular, if $\mathcal{X}_S (k'_\infty)$ is a finitely generated 
``non-abelian'' free pro-$p$ group 
for some totally real field $k'$, then Greenberg's conjecture fails.
Hence it seems significant to determine whether $\mathcal{X}_S (k'_\infty)$ is a 
free pro-$p$ group or not.
(Note that if $\mathcal{X}_S (k'_\infty)$ is not finitely generated, then 
$X (k'_\infty)$ is not finitely generated as a $\mathbb{Z}_p$-module.
See, e.g., \cite[p.1494]{IMO}.)

In Section \ref{sec_real_quadratic}, 
we shall consider Question B in the case of real quadratic fields (with odd $p$).
Let $k'$ be a real quadratic field, and $k'_\infty/k'$ the cyclotomic $\mathbb{Z}_p$-extension.
We put $X_S (k'_\infty) = \Gal (L_S (k'_\infty)/k'_\infty)$ 
(resp. $X_S (k') = \Gal (L_S (k')/k')$), 
where $L_S (k'_\infty)/k'_\infty$ (resp. $L_S (k')/k'$) is the maximal 
abelian pro-$p$ extension 
unramified outside $S (k'_\infty)$ (resp. $S (k')$).
When $X_S (k'_\infty)$ is trivial or $X_S (k'_\infty)$ contains a non-trivial 
$\mathbb{Z}_p$-torsion element, then $\mathcal{X}_S (k'_\infty)$ is not a non-abelian 
free pro-$p$ group. 
Unfortunately, there is a case that $X_S (k'_\infty)$ is not trivial and 
has no non-trivial $\mathbb{Z}_p$-torsion elements.
(For this topic, see \cite{Itoh18}.)
We will consider the above question for such a case.
In the present paper, we show the following:

\begin{thm}\label{thm_real_quad}
Let $k'$ be a real quadratic field and $p$ an odd prime number.
Assume that $p$ is inert in $k'$, and $p$ does not divide the class number of $k'$.
Let $r$ be a positive integer satisfying $r \geq 3$, and 
$S= \{ q_1, q_2, q_3, \ldots, q_r \}$ a set of $r$ distinct prime numbers.
Suppose that  
\[ \text{$q_i$ is inert in $k'$}, \quad q_i \equiv -1 \pmod{p}, \quad \text{and} \quad 
\quad q_i^2 \not\equiv 1 \pmod{p^2} \]
for $i=1,2,3, \ldots, r$.
Suppose also that both $X_{\{ q_1 \}} (k')$ and $X_{\{ q_2 \}} (k')$ are trivial.
Then $\mathcal{X}_S (k'_\infty)$ is not a free pro-$p$ group.
\end{thm}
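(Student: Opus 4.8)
The plan is to analyze the maximal abelian pro-$p$ quotient $X_S(k'_\infty)$ of $\mathcal{X}_S(k'_\infty)$ as a $\Lambda$-module and show that it has a non-trivial $\mathbb{Z}_p$-torsion element; by the principle recalled in Section~\ref{sub_conj1} this immediately forces $\mathcal{X}_S(k'_\infty)$ to be non-free (and in fact not even a non-abelian free pro-$p$ group). Since $p$ is odd and inert in the real quadratic field $k'$, there is a unique prime of $k'_\infty$ above $p$, and by hypothesis $p \nmid h(k')$; the primes $q_i$ are inert in $k'$ with $q_i \equiv -1 \pmod p$ but $q_i^2 \not\equiv 1 \pmod{p^2}$, so at each $q_i$ the local unit group has a $\mathbb{Z}/p$ but not $\mathbb{Z}/p^2$ contribution, and the decomposition group in $k'_\infty/k'$ at $q_i$ is the whole of $\Gamma$ (the prime $q_i$ does not split in the $\mathbb{Z}_p$-extension because $q_i$ is a $\mathbb{Z}_p$-extension-inert prime — more precisely the local cyclotomic $\mathbb{Z}_p$-extension of $(k')_{q_i}$ is unramified and procyclic, matched with $\Gamma$). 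This is exactly the local setup studied in \cite{Itoh18} and \cite{F-I}, where one shows that adjoining ramification at a single such $q_i$ produces in $X_{\{q_i\}}(k'_\infty)$ a $\Lambda$-submodule isomorphic to $\Lambda/((1+T)^{?}-1,\,p)$-type quotient — in short, a finite non-trivial piece — \emph{unless} a Kummer-type obstruction vanishes; the hypotheses $q_i^2 \not\equiv 1 \pmod{p^2}$ and the triviality of $X_{\{q_i\}}(k')$ for $i=1,2$ are precisely what is needed to pin down that local contribution.

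The key steps, in order, are as follows. First, I would set up the genus-theory / Chebotarev description of $X_S(k')$ and $X_S(k'_\infty)$ via class field theory: $X_S(k'_\infty) \cong \varprojlim X_S(k'_n)$, and each $X_S(k'_n)$ sits in an exact sequence relating it to the $S$-ray class group, with the ramification at the primes above $S$ contributing a quotient of $\bigoplus_{v \mid S} U_v^{(1)}/(\text{global units, norms})$. Second, using $p \nmid h(k')$ and the triviality assumptions on $X_{\{q_1\}}(k')$ and $X_{\{q_2\}}(k')$, I would show that at the base level the contributions of $q_1$ and $q_2$ are "absorbed" (killed by global units), so that effectively $X_S(k'_\infty)$ is controlled by the $r-2 \geq 1$ remaining primes $q_3,\dots,q_r$ together with the linking terms among all $r$ primes. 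Third — the heart of the argument — I would compute the $\Gamma$-action on the local ramification contribution at each $q_i$: because $q_i \equiv -1 \pmod p$, the Frobenius at $q_i$ acts on $\mu_p$ by $-1$, and the decomposition group being all of $\Gamma$ means the corresponding cyclic-of-order-$p$ local term is acted on by $\Gamma$ through the character $T \mapsto (1+T)-1$ composed with reduction, yielding a $\Lambda$-module isomorphic to $\Lambda/(p, (1+T)^p - 1)$ or a similar finite module with non-trivial $\mathbb{Z}_p$-torsion as a $\mathbb{Z}_p$-module. Fourth, I would check that in passing from $X_S(k'_\infty)$ to its $\Lambda$-torsion and then to its $\mathbb{Z}_p$-torsion, this finite piece survives — here the condition $r \geq 3$ guarantees enough generators that the relation module cannot kill everything, and the condition $q_i^2 \not\equiv 1 \pmod{p^2}$ guarantees the piece is exactly $\mathbb{Z}/p$ (not split off as a free $\mathbb{Z}_p$-summand by a higher local unit). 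Finally, invoking that a non-trivial $\mathbb{Z}_p$-torsion element in the abelianization obstructs freeness, I conclude $\mathcal{X}_S(k'_\infty)$ is not free pro-$p$.

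The main obstacle I anticipate is the third step: controlling the \emph{linking} between the different ramified primes $q_1,\dots,q_r$ in the Iwasawa module, i.e., showing that the finite $\mathbb{Z}/p$ contribution coming from (say) $q_3$ is not cancelled by a relation involving the Frobenius symbols of the other $q_j$'s at $q_3$. This requires a careful Chebotarev/Kummer duality computation — essentially controlling the matrix of power-residue symbols $\left(\tfrac{q_i}{q_j}\right)_p$ and the image of the global units in $\prod_v U_v^{(1)}/(U_v^{(1)})^p$ — and it is exactly here that the precise congruence conditions $q_i \equiv -1\pmod p$, $q_i^2\not\equiv 1\pmod{p^2}$, together with the triviality of $X_{\{q_1\}}(k')$ and $X_{\{q_2\}}(k')$, have to be used to force the linking matrix into a shape where the rank computation leaves a non-trivial torsion cokernel. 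I would handle this by reducing mod $p$ and arguing that the relevant $\mathbb{F}_p$-linear map has a cokernel of dimension at least one, using $r \geq 3$ to ensure the source is strictly smaller than what would be needed to kill the target; a secondary technical point is justifying the commutativity of the various $\varprojlim$'s (Nakayama / finiteness of the layers) so that the base-level vanishing statements propagate up the tower.
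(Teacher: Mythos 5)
Your proposed strategy cannot work: you aim to exhibit a non-trivial $\mathbb{Z}_p$-torsion element in $X_S(k'_\infty)$ and then invoke the standard obstruction to freeness, but under exactly the hypotheses of the theorem the paper's Lemma \ref{lem_real1} shows $X_S(k'_\infty)\cong\mathbb{Z}_p^{\oplus r-1}$ as a $\mathbb{Z}_p$-module, i.e.\ the abelianization is $\mathbb{Z}_p$-\emph{free}. Indeed the introduction states explicitly that this theorem is designed for the case where ``$X_S(k'_\infty)$ is not trivial and has no non-trivial $\mathbb{Z}_p$-torsion elements,'' so the easy criterion you rely on is precisely the one that fails here. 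The congruences $q_i\equiv -1\pmod p$, $q_i^2\not\equiv 1\pmod{p^2}$ and the triviality of $X_{\{q_1\}}(k')$, $X_{\{q_2\}}(k')$ are used in the paper to pin down this torsion-free structure (and the isomorphism $X_{\{q_1,q_2\}}(k'_\infty)\cong\mathbb{Z}_p$), not to manufacture a finite submodule; your third and fourth steps, which try to make a $\mathbb{Z}/p$ piece ``survive'' into the $\mathbb{Z}_p$-torsion of $X_S(k'_\infty)$, would therefore end in a rank computation with no leftover torsion.

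The missing idea is to test freeness against an open subgroup rather than against the abelianization of $\mathcal{X}_S(k'_\infty)$ itself. The paper takes $K'=L_{\{q_1,q_2\}}(k')$, a degree-$p$ extension of $k'$ unramified outside $S$ (dihedral over $\mathbb{Q}$), so that $U=\mathcal{X}_S(K'_\infty)$ is an open subgroup of index $p$; if $\mathcal{X}_S(k'_\infty)$ were free with $d=r-1$, Schreier's formula would force $\rkzp X_S(K'_\infty)=p(r-2)+1$. The heart of the proof is then an upper bound in the opposite direction: one shows $X_{\{q_3\}}(K'_\infty)$ is \emph{finite} (Proposition \ref{real_key} and Corollary \ref{key_cor}, via Kummer theory over $H^+=K'(\zeta_p+\zeta_p^{-1})$, a Baker--Brumer independence argument for the conjugates of a generator of a power of a prime above $q_3$, and a characteristic-ideal computation for $\mathfrak{X}_{\{p,q_3\}}(H^+_\infty)$), whence the global units already fill up the $p$-dimensional local contribution at $q_3$ and $\rkzp X_S(K'_\infty)\le p(r-2)+2-p\le p(r-2)$, a contradiction. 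None of this machinery appears in your outline, and without descending to $K'_\infty$ (or some comparable non-abelian input) the statement is not reachable from the abelian data over $k'_\infty$ alone.
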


Under the assumptions of the above theorem, we see that
$X_S (k'_\infty) \cong \mathbb{Z}_p^{\oplus r-1}$ as a $\mathbb{Z}_p$-module 
(see Lemma \ref{lem_real1}).

\subsection{Notation}
Let $p$ be an arbitrary prime number.
In this subsection, $\mathcal{K}$ is an algebraic extension of $\mathbb{Q}$, 
and $S$ is a finite set of prime numbers not containing $p$.
(We assume that all algebraic extensions of $\mathbb{Q}$ 
are embedded in $\mathbb{C}$ under a fixed embedding.)
Let $S (\mathcal{K})$ be the set of primes of $\mathcal{K}$ lying above $S$.
We will use the following notation:
\begin{itemize}
\item $\mathcal{L}_S (\mathcal{K})/\mathcal{K}$ : the maximal pro-$p$ extension unramified outside 
$S (\mathcal{K})$,
\item $\mathcal{X}_S (\mathcal{K}) =\Gal (\mathcal{L}_S (\mathcal{K})/\mathcal{K})$,
\item $L_S (\mathcal{K})/\mathcal{K}$ : the maximal 
abelian pro-$p$ extension unramified outside 
$S (\mathcal{K})$, and 
\item $X_S (\mathcal{K}) =\Gal (L_S (\mathcal{K})/\mathcal{K})$.
\end{itemize}
For simplicity, we shall write $\mathcal{L} (\mathcal{K})$, 
$\mathcal{X} (\mathcal{K})$, $L (\mathcal{K})$, $X (\mathcal{K})$ 
instead of $\mathcal{L}_\emptyset (\mathcal{K})$, 
$\mathcal{X}_\emptyset (\mathcal{K})$, $L_\emptyset (\mathcal{K})$, 
$X_\emptyset (\mathcal{K})$, respectively.

We denote by $\mathbb{B}_\infty$ the cyclotomic $\mathbb{Z}_p$-extension of $\mathbb{Q}$.
When $\mathcal{K}$ is a finite extension of $\mathbb{Q}$, we will define the following notation:
\begin{itemize}
\item $h(\mathcal{K})$ : the class number of $\mathcal{K}$,
\item $A(\mathcal{K})$ : the Sylow $p$-subgroup of the ideal class group of $\mathcal{K}$ 
($A(\mathcal{K}) \cong X (\mathcal{K})$ by class field theory),
\item $E (\mathcal{K})$ : the group of units in $\mathcal{K}$,
\item $\mathcal{K}_{\mathfrak{p}}$ : the completion of $\mathcal{K}$ at a prime $\mathfrak{p}$ of 
$\mathcal{K}$,  
\item $\mathcal{K}_\infty = \mathcal{K} \mathbb{B}_\infty$ (the cyclotomic $\mathbb{Z}_p$-extension 
of $\mathcal{K}$),
\item $\mathcal{K}_n$ : the $n$th layer of $\mathcal{K}_\infty/\mathcal{K}$ 
($\mathcal{K}_n/\mathcal{K}$ is a cyclic extension of degree $p^n$), 
\item $\lambda (\mathcal{K})$ : the Iwasawa 
$\lambda$-invariant of $\mathcal{K}_\infty/\mathcal{K}$, and 
\item $\mu (\mathcal{K})$ : the Iwasawa 
$\mu$-invariant of $\mathcal{K}_\infty/\mathcal{K}$.
\end{itemize}

For a finite group $G_1$, we denote by $|G_1|$ its order.
For a finite abelian $p$-group $G_2$, we put 
$r_p (G_2) = \dim_{\mathbb{F}_p} G_2 / p G_2$ (the $p$-rank of $G_2$).
For a $\mathbb{Z}_p$-module $M_1$, we put 
$\rkzp M_1 = \dim_{\mathbb{Q}_p} M_1 \otimes_{\mathbb{Z}_p} \mathbb{Q}_p$ 
(the $\mathbb{Z}_p$-rank of $M_1$).
For a cyclic $p$-group (resp. an infinite procyclic pro-$p$ group) $G_3$ and 
a $\mathbb{Z}_p [G_3]$-module (resp. $\mathbb{Z}_p [[G_3]]$-module) $M_2$, 
let $(M_2)^{G_3}$ be the $G_3$-invariant submodule of $M_2$ and 
$(M_2)_{G_3}$ the $G_3$-coinvariant quotient of $M_2$.

\subsection{Free pro-$p$ groups and Demu{\v s}kin groups}
In this subsection, we recall some properties of free pro-$p$ groups and Demu{\v s}kin groups.
(See, e.g., \cite{Serre}, \cite{Koch}, \cite{NSW}.)

Let $p$ be an arbitrary prime number.
For a pro-$p$ group $\mathcal{G}$, we denote by $d( \mathcal{G} ) = 
\dim_{\mathbb{F}_p} H^1 (\mathcal{G}, \mathbb{F}_p)$ its generator rank and 
$\mathcal{G}^{ab}$ its maximal abelian pro-$p$ quotient.

\begin{lem}\label{pro-ell}
Let $\mathcal{G}$ be a finitely generated pro-$p$ group.

\smallskip

\noindent (i) If $\mathcal{G}$ is a free pro-$p$ group satisfying 
$d( \mathcal{G}) \geq 1$, then 
\begin{itemize}
\item $\mathcal{G}^{ab}$ is a free $\mathbb{Z}_p$-module of rank $d( \mathcal{G})$, and
\item for every open subgroup $U$ of $\mathcal{G}$, $U$ is also a free pro-$p$ group and 
\[ d(U) - 1 = (\mathcal{G}:U) (d( \mathcal{G}) - 1). \]
\end{itemize}

\smallskip

\noindent (ii) If $\mathcal{G}$ is a Demu{\v s}kin group satisfying 
$d( \mathcal{G}) \geq 2$, then 
\begin{itemize}
\item $\mathop{\mathrm{rank}_{\mathbb{Z}_p}} \mathcal{G}^{ab}$ 
is $d( \mathcal{G})$ or $d( \mathcal{G})-1$, and
\item for every open subgroup $U$ of $\mathcal{G}$, $U$ is also a Demu{\v s}kin group and 
\[ d(U) - 2 = (\mathcal{G}:U) (d( \mathcal{G}) - 2). \]
\end{itemize}
\end{lem}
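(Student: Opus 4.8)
The plan is to reduce both parts to three standard facts about finitely generated pro-$p$ groups of finite cohomological dimension: multiplicativity of the Euler--Poincar\'e characteristic under passage to open subgroups, the pro-$p$ Nielsen--Schreier theorem, and Serre's one-relator description of Demu{\v s}kin groups (see \cite{Serre}, \cite{Koch}, \cite{NSW}). Recall first that for such a $\mathcal{G}$ the integer $\chi(\mathcal{G}) = \sum_i (-1)^i \dim_{\mathbb{F}_p} H^i(\mathcal{G},\mathbb{F}_p)$ is well defined and satisfies $\chi(U) = (\mathcal{G}:U)\,\chi(\mathcal{G})$ for every open subgroup $U \le \mathcal{G}$. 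A free pro-$p$ group has cohomological dimension $\le 1$, hence $\chi(\mathcal{G}) = 1 - d(\mathcal{G})$; a Demu{\v s}kin group has cohomological dimension $2$ with $\dim_{\mathbb{F}_p} H^2(\mathcal{G},\mathbb{F}_p) = 1$, hence $\chi(\mathcal{G}) = 2 - d(\mathcal{G})$. Everything in the lemma will follow once these normalisations are combined with the appropriate ``inheritance'' statement for open subgroups.

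For part (i): the isomorphism $\mathcal{G}^{ab} \cong \mathbb{Z}_p^{d(\mathcal{G})}$ is immediate from the universal property of the free pro-$p$ group on a finite set, whose maximal abelian quotient is the free $\mathbb{Z}_p$-module on that set. That every open subgroup $U$ of a free pro-$p$ group is again free is the pro-$p$ Nielsen--Schreier theorem, which one gets from $\mathrm{cd}(U) \le \mathrm{cd}(\mathcal{G}) \le 1$ together with the cohomological characterisation of free pro-$p$ groups. The index formula is then forced: applying $\chi = 1 - d$ and multiplicativity to both $\mathcal{G}$ and $U$ gives $1 - d(U) = (\mathcal{G}:U)\,(1 - d(\mathcal{G}))$, i.e. $d(U) - 1 = (\mathcal{G}:U)\,(d(\mathcal{G}) - 1)$.

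For part (ii): I would invoke Serre's structure theorem, which says that a Demu{\v s}kin group $\mathcal{G}$ with $n = d(\mathcal{G})$ has a single defining relation $r$, and that, for a suitable minimal generating set, either $r$ lies in the closure of the commutator subgroup of the ambient free pro-$p$ group --- in which case $\mathcal{G}^{ab} \cong \mathbb{Z}_p^{\,n}$ and $\rkzp \mathcal{G}^{ab} = n$ --- or the image of $r$ in the free abelianisation is $p^f$ times a primitive element for some finite $f \ge 1$ --- in which case $\mathcal{G}^{ab} \cong \mathbb{Z}/p^f\mathbb{Z} \oplus \mathbb{Z}_p^{\,n-1}$ and $\rkzp \mathcal{G}^{ab} = n-1$; in either case $\rkzp \mathcal{G}^{ab}$ is $d(\mathcal{G})$ or $d(\mathcal{G}) - 1$. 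That every open subgroup $U$ is again Demu{\v s}kin follows from the fact that being Demu{\v s}kin is equivalent to $\mathcal{G}$ being a Poincar\'e duality pro-$p$ group of dimension $2$ ($\mathrm{cd}\,\mathcal{G} = 2$, $\dim H^2 = 1$, and a non-degenerate cup product $H^1 \times H^1 \to H^2$), a property passed to open subgroups. Then multiplicativity of $\chi$ with $\chi = 2 - d$ for $\mathcal{G}$ and for $U$ yields $2 - d(U) = (\mathcal{G}:U)\,(2 - d(\mathcal{G}))$, i.e. $d(U) - 2 = (\mathcal{G}:U)\,(d(\mathcal{G}) - 2)$.

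The main obstacle is that the two inheritance statements --- open subgroups of free pro-$p$ groups are free, and open subgroups of Demu{\v s}kin groups are Demu{\v s}kin --- are genuine theorems rather than formal consequences of the definitions: the first rests on the cohomological characterisation of freeness via $\mathrm{cd} \le 1$, and the second on the stability of the Poincar\'e duality property (and the control of the dualizing module) under passage to open subgroups. Once these are granted, the index formulas are pure bookkeeping with the Euler--Poincar\'e characteristic, and the statements on $\mathcal{G}^{ab}$ are immediate --- from the universal property in part (i), and from Serre's explicit relator in part (ii). As all of this is classical, in the paper I would simply cite \cite{Serre}, \cite{Koch}, \cite{NSW}.
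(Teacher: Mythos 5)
Your proposal is correct and matches the paper's treatment: the paper offers no proof of this lemma beyond the remark that the assertions are well known, citing the same references (\cite{Serre}, \cite{Koch}, \cite{NSW}) that you invoke. Your sketch via multiplicativity of the Euler--Poincar\'e characteristic, the pro-$p$ Nielsen--Schreier theorem, and the Demu{\v s}kin classification is precisely the standard argument contained in those sources, so there is nothing to reconcile.
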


\begin{proof}
These assertions are well known.
See, e.g., \cite{Serre}, \cite{Koch}, \cite{NSW}.
\end{proof}

\section{Results for imaginary quadratic fields}\label{sec_im1}

\subsection{Preliminaries ($p \geq 3$)}\label{sub_Settings} 
Let $k$ be an imaginary quadratic field and $p$ an odd prime number 
satisfying (C1) and (C2).
Take an unramified cyclic extension $K/k$ of degree $p$, and let $F$ be an intermediate field of 
$K/\mathbb{Q}$ of degree $p$.
We fix elements $\sigma$, $\tau$ of $\Gal (K/\mathbb{Q})$ such that 
$\sigma$ generates $\Gal (K/k)$ and $\tau$ generates $\Gal (K/F)$.
(Then, $\tau \sigma = \sigma^{-1} \tau$.)

We will give some remarks on $K_\infty /K$ and $X (K_\infty)$.
Since $K \cap \mathbb{B}_\infty = \mathbb{Q}$, we shall identify $\Gal (K_\infty/\mathbb{B}_\infty)$ 
with $\Gal (K/\mathbb{Q})$.
We put $\Gamma = \Gal (K_\infty/K)$, and we also identify $\Gal(F_\infty/F)$, $\Gal(k_\infty/k)$ 
with $\Gamma$.
It is well known that $X (K_\infty)$ is finitely generated torsion $\mathbb{Z}_p [[\Gamma]]$-module.
We see that $\mu (k)=0$ by Ferrero-Washington's theorem \cite{F-W}, 
and we also see that $\mu (K) =0$ because 
$K_\infty /k_\infty$ is a cyclic extension of degree $p$ (see \cite[Theorem 3]{Iwa73mu}).
Hence, $X (K_\infty)$ is finitely generated as a $\mathbb{Z}_p$-module 
(the same assertion also holds for $X (F_\infty)$).
We also note that all primes of $K$ lying above $p$ are totally ramified in $K_\infty$.

We put $X(K_\infty)^{\pm} = (1 \pm \tau) X (K_\infty)$.
Then $X(K_\infty) \cong X(K_\infty)^{+} \oplus X(K_\infty)^{-}$, and 
$X(K_\infty)^{+} \cong X(F_\infty)$.
We shall define a homomorphism (as $\mathbb{Z}_p$-modules) from 
$X(K_\infty)^-$ to $X(K_\infty)^+$ inspired by the idea given in 
Konomi's paper (see \cite[Proposition 1.5 (4)]{Kono}).

\begin{definition}\label{Konomi_hom} \normalfont
Let the notation be as above.
We define a homomorphism $\phi$ by the following:
\[ \phi : X (K_\infty)^- \rightarrow X (K_\infty)^+, \quad x \mapsto 
(\sigma^{\frac{p-1}{2}} - \sigma^{\frac{p+1}{2}}) x. \]
\end{definition}

\begin{lem}\label{Konomi_property}
Let the notation be as above.
Then the image of $\phi$ is certainly contained in $X (K_\infty)^+$, 
and the kernel $\kernel \phi$ of $\phi$ is 
\[ \{ x \in X (K_\infty)^- \, | \, \sigma x = x \}. \]
\end{lem}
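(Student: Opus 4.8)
The plan is to verify both assertions by a direct computation in the group ring $\mathbb{Z}_p[\Gal (K/\mathbb{Q})]$ acting on $X(K_\infty)$, using only the dihedral relation $\tau \sigma = \sigma^{-1}\tau$, the relation $\sigma^p = 1$ (so that $\sigma^p$ acts trivially on $X(K_\infty)$), and the fact that $\tau$ acts as $-1$ on $X(K_\infty)^-$, which is immediate from $X(K_\infty)^- = (1-\tau)X(K_\infty)$ together with $\tau^2 = 1$.

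For the first claim, I would take $x \in X(K_\infty)^-$ and show $\tau \phi(x) = \phi(x)$. Writing $\phi(x) = (\sigma^{\frac{p-1}{2}} - \sigma^{\frac{p+1}{2}})x$ and pushing $\tau$ past the powers of $\sigma$ via $\tau \sigma^j = \sigma^{-j}\tau$, one gets
\[ \tau \phi(x) = (\sigma^{-\frac{p-1}{2}} - \sigma^{-\frac{p+1}{2}})\tau x = -(\sigma^{-\frac{p-1}{2}} - \sigma^{-\frac{p+1}{2}})x, \]
using $\tau x = -x$. Now reduce the exponents modulo $p$: since $\sigma^p = 1$ we have $\sigma^{-\frac{p-1}{2}} = \sigma^{\frac{p+1}{2}}$ and $\sigma^{-\frac{p+1}{2}} = \sigma^{\frac{p-1}{2}}$, so the right-hand side equals $-(\sigma^{\frac{p+1}{2}} - \sigma^{\frac{p-1}{2}})x = \phi(x)$. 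Hence $\phi(x) \in X(K_\infty)^+$, as claimed.

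For the second claim, I would factor $\sigma^{\frac{p-1}{2}} - \sigma^{\frac{p+1}{2}} = \sigma^{\frac{p-1}{2}}(1 - \sigma)$ inside $\mathbb{Z}_p[\Gal (K/\mathbb{Q})]$. Since $\sigma^p$ acts trivially on $X(K_\infty)$, the operator $\sigma^{\frac{p-1}{2}}$ is invertible on $X(K_\infty)$ (its inverse is $\sigma^{\frac{p+1}{2}}$), so $\phi(x) = 0$ if and only if $(1 - \sigma)x = 0$, i.e. $\sigma x = x$. Intersecting with the domain $X(K_\infty)^-$ of $\phi$ gives $\kernel \phi = \{ x \in X(K_\infty)^- \mid \sigma x = x \}$.

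There is essentially no substantial obstacle here; the only points requiring a little care are the bookkeeping with exponents of $\sigma$ modulo $p$ (so that $\frac{p\pm 1}{2}$ and their negatives are matched up correctly) and the observation that $\sigma^{\frac{p-1}{2}}$ acts invertibly, which is exactly what allows one to cancel it in the kernel computation.
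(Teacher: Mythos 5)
Your computation is correct, and it is exactly the ``direct computation'' that the paper's proof consists of (the paper gives no further details). The only implicit point worth noting is that identifying $X(K_\infty)^+=(1+\tau)X(K_\infty)$ with the $\tau$-fixed part in your first step uses that $2$ is invertible in $\mathbb{Z}_p$, which holds since $p$ is odd.
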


\begin{proof}
This lemma follows from direct computations.
\end{proof}

\subsection{First criterion ($p \geq 3$)}\label{proof_thm_im1}

\begin{proof}[Proof of Theorem \ref{thm_im1}] 
Assume that $\mathcal{X} (k_\infty)$ is a free pro-$p$ group and 
\[ \lambda (F) \leq \dfrac{(p-1)\lambda (k)-p}{2}. \]
Put $\mathcal{G} = \mathcal{X} (k_\infty)$ and $U = \mathcal{X} (K_\infty)$.
From the fact that $K/k$ is unramified, 
we see that $\mathcal{L} (K_\infty) = \mathcal{L} (k_\infty)$, and hence 
$U$ is an open subgroup of $\mathcal{G}$ of index $p$.
We shall apply Lemma \ref{pro-ell} (i) for $\mathcal{G}$ and $U$.
Since $X (k_\infty) \cong \mathbb{Z}_p^{\lambda (k)}$, 
the generator rank of $\mathcal{G}$ is $\lambda (k)$. 
We see that $U$ is also a free pro-$p$ group, and
\[ d (U) - 1 = p (\lambda (k) -1 ). \]
Consequently, 
\begin{equation}\label{eq_thm1}
\lambda (K) = \rkzp U^{ab} = d (U) = p \lambda (k) - p +1.
\end{equation}

Let $\phi$ be the homomorphism given in Definition \ref{Konomi_hom}.
We put 
\[ \lambda^+ (K) = \rkzp X(K_\infty)^+ (= \lambda (F)) \quad \text{and} 
\quad \lambda^- (K) = \rkzp X(K_\infty)^-.\]
Consider the following exact sequence 
\[ 0 \to X (K_\infty)^{\langle \sigma \rangle} \to X (K_\infty) 
\overset{\sigma - 1}{\longrightarrow} 
X (K_\infty) \to X (K_\infty)_{\langle \sigma \rangle} \to 0. \]
From this, we see that $\rkzp X (K_\infty)^{\langle \sigma \rangle} 
= \rkzp X (K_\infty)_{\langle \sigma \rangle}$.
We note that $\kernel \phi$ is a $\mathbb{Z}_p$-submodule of 
$X(K_\infty)^{\langle \sigma \rangle}$.
Note also that the intermediate field $L'$ of $L (K_\infty)/K_\infty$ corresponding to 
$(\sigma -1) X(K_\infty)$ is an abelian (unramified) pro-$p$ extension over $k_\infty$ 
(that is, $L'$ is an intermediate field of $L (k_\infty)/k_\infty$).
By combining these facts, we obtain the following inequalities 
\[ \rkzp (\kernel \phi) \leq \rkzp X(K_\infty)^{\langle \sigma \rangle} 
= \rkzp X(K_\infty)_{\langle \sigma \rangle} \leq \lambda (k). \]
Hence, by using Lemma \ref{Konomi_property}, 
we see that 
\[ \lambda^- (K) \leq  \rkzp (\kernel \phi) + \lambda^+ (K) \leq \lambda(k) + \lambda (F). \]
Moreover, we see the following:
\[ \lambda (K) = \lambda^+ (K) + \lambda^- (K) \leq \lambda (k) + 2 \lambda (F). \]
Since $2 \lambda (F) \leq (p-1)\lambda (k)-p$, we see  
\[ \lambda (K) \leq p \lambda (k) - p. \]
This contradicts the equality (\ref{eq_thm1}).
\end{proof}

\subsection{Second criterion ($p = 3$)}\label{proof_thm_im2}
In this subsection, we assume that $p=3$.
Recall that $F$ is a cubic field which is not totally real, and 
there are only two primes lying above $3$ in $F$.

\begin{definition} \label{unit_F} \normalfont 
Let $\mathfrak{p}$ be the prime of $F$ which is lying above $3$ and  
satisfies $F_{\mathfrak{p}} = \mathbb{Q}_3$.
We also take a unit $\varepsilon_F$ such that $E(F) = \langle -1, \varepsilon_F \rangle$. 
We define a positive integer $e (F)$ which satisfies
\[ \varepsilon_F^{2} \equiv 1 \pmod{\mathfrak{p}^{e(F)}} \quad \text{and} \quad 
\varepsilon_F^{2} \not\equiv 1 \pmod{\mathfrak{p}^{e(F)+1}}. \]
Note that $e(F)$ does not depend on the choice of $\varepsilon_F$.
\end{definition}

\begin{proof}[Proof of Theorem \ref{thm_im2}.] 
By using Lemma \ref{pro-ell} (i), we see that 
if $\mathcal{X} (k_\infty)$ is a 
free pro-$3$ group, then $X (K_\infty)$ is a finitely generated free 
$\mathbb{Z}_3$-module 
(recall the first paragraph of the proof of Theorem \ref{thm_im1}).
Hence, $\mathcal{X} (k_\infty)$ is not a free pro-$3$ group when 
$X (K_\infty)$ has a non-trivial $\mathbb{Z}_3$-torsion element.

Recall the facts that $X(K_\infty) \cong X(K_\infty)^{+} \oplus X(K_\infty)^{-}$ and 
$X(K_\infty)^{+} \cong X(F_\infty)$. 
Hence, to see the assertion of this theorem, 
it is sufficient to show that 
$X(F_\infty)$ has a non-trivial $\mathbb{Z}_p$-torsion element 
(cf. \cite{F-I}, \cite{Itoh18}).
The outline of the following proof is similar to \cite[Corollary 3.5 (i)]{F-I}.

Let $\mathfrak{p}$ and $e(F)$ be as in Definition \ref{unit_F}.
We put $\Gamma_n = \Gal (F_n/F)$ for $n \ge 0$.
First of all, we shall show the following inequality:
\begin{equation}\label{ineq_A(F_n)}
|A (F_n)^{\Gamma_n}| \leq |A(F)| \cdot 3^{e(F)-1}.
\end{equation}
We will use the idea given in \cite{Hachi}.
For $n \geq 0$, let $M_{\{ \mathfrak{p} \}} (F_n)$ be the 
maximal abelian pro-$3$ extension of $F_n$ unramified outside $\mathfrak{p}$, 
and put $\mathfrak{X}_{\{ \mathfrak{p} \}} (F_n) = \Gal (M_{\{ \mathfrak{p} \}} (F_n)/F_n)$.
Since there are only two primes in $F$ lying above $3$ and they are totally ramified in $F_\infty/F$,
we can see that 
$\mathfrak{X}_{\{ \mathfrak{p} \}} (F_n)_{\Gamma_n} \cong \mathfrak{X}_{\{ \mathfrak{p} \}} (F)$.
Let $\mathcal{U}^1_{\mathfrak{p}} (F)$ be the group of principal units of $F_{\mathfrak{p}}$, 
that is, 
\[ \mathcal{U}^1_{\mathfrak{p}} (F) = \{ u \, | \, 
\text{$u$ is a unit of $F_{\mathfrak{p}}$ satisfying 
$u \equiv 1 \pmod{\mathfrak{p}}$} \} \]
(we also use the symbol $\mathfrak{p}$ for the maximal ideal of the ring of integers of 
$F_{\mathfrak{p}}$), 
and denote by $\overline{\mathcal{E}}$ the subgroup of 
$\mathcal{U}^1_{\mathfrak{p}} (F)$ topologically generated by $\varepsilon_F^2$.
From the exact sequence 
\[ 0 \to \mathcal{U}^1_{\mathfrak{p}} (F) /\overline{\mathcal{E}}
\to \mathfrak{X}_{\{ \mathfrak{p} \}} (F) \to A (F) \to 0 \]
and the fact that 
$|\mathcal{U}^1_{\mathfrak{p}} (F) /\overline{\mathcal{E}}| = 3^{e(F) -1}$, 
we see that $|\mathfrak{X}_{\{ \mathfrak{p} \}} (F)|= |A(F)| \cdot 3^{e(F) -1}$.
We also note that $A(F_n)$ can be seen as a quotient of $\mathfrak{X}_{\{ \mathfrak{p} \}} (F_n)$, 
then $|\mathfrak{X}_{\{ \mathfrak{p} \}} (F_n)_{\Gamma_n}| \geq |A (F_n)_{\Gamma_n}|$.
From these facts, we see that 
\[ |A (F_n)^{\Gamma_n}| = |A (F_n)_{\Gamma_n}| \leq |\mathfrak{X}_{\{ \mathfrak{p} \}} (F_n)_{\Gamma_n}|
= |\mathfrak{X}_{\{ \mathfrak{p} \}} (F)| = |A(F)| \cdot 3^{e(F) -1}. \]
The inequality (\ref{ineq_A(F_n)}) has been shown.

Assume that an ideal class $c$ of $D(F_{n'})$ is not trivial for some $n'$.
Next, we claim that $c$ becomes trivial in $A (F_m)$ for a sufficiently large $m > n'$.
Since $D(F_n)$ is a subgroup of $A (F_n)^{\Gamma_n}$,
the boundedness of $|A (F_n)^{\Gamma_n}|$ also implies the boundedness of 
$|D(F_n)|$.
Then the claim follows from the argument given in the proof of 
\cite[p.267, Corollary]{Gre76} (with a slight modification).

Finally, by using \cite[p.218, Corollary]{Oza95}, 
we see that the maximal finite $\mathbb{Z}_3 [[\Gamma]]$-submodule 
of $X(F)$ is not trivial.
Then the theorem follows.
\end{proof}

\subsection{Proof of Theorem \ref{prop_conj1} ($p \geq 3$)}\label{proof_prop_conj1}
In this subsection, $p$ denotes an odd prime number.

\begin{propositionC}[see Lemmermeyer \cite{Lem}]
Let $k$ be an imaginary quadratic field and $p$ an odd prime number 
satisfying (C1), (C2), and (C3).
Let $K/k$ be the unique unramified cyclic extension of degree $p$, and 
$F$ an intermediate field of $K/\mathbb{Q}$ of degree $p$.
For a non-negative integer $n$, if $r_p (X (k_n))=1$ then 
$X(F_n)$ is trivial.
(In particular, $X(F)$ is always trivial because $k$ and $p$ satisfy (C3).)
\end{propositionC}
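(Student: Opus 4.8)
The plan is to control $A(K_n)\cong X(K_n)$ as a module over the group ring $\mathbb{Z}_p[\langle\sigma\rangle]$ of $\Gal(K_n/k_n)$, and to show it coincides with its $\tau$-minus part. Since $K\cap\mathbb{B}_\infty=\mathbb{Q}$, identify $\Gal(K_n/\mathbb{B}_n)$ with $\Gal(K/\mathbb{Q})\cong D_{2p}$, so that $\sigma$ generates $\Gal(K_n/k_n)$ (order $p$) and $\tau$ generates $\Gal(K_n/F_n)$ (order $2$). Because $p$ is odd and $[K_n:F_n]=2$, the extension-of-ideals map identifies $A(F_n)$ with the $\tau$-fixed subgroup $A(K_n)^{+}$, and $A(K_n)=A(K_n)^{+}\oplus A(K_n)^{-}$ with $A(K_n)^{\pm}=\{c:\tau c=\pm c\}$; thus it suffices to prove $A(K_n)=A(K_n)^{-}$, since this gives $X(F_n)\cong A(F_n)\cong A(K_n)^{+}=0$. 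Two facts about the extension map $j\colon A(k_n)\to A(K_n)$ (equivariant for $\Gal(K_n/\mathbb{B}_n)$ acting on $k_n$ through its quotient) are immediate: (i) $\tau\notin\Gal(K_n/k_n)$, so $\tau|_{k_n}$ is the non-trivial element of $\Gal(k_n/\mathbb{B}_n)$; since $A(\mathbb{B}_n)=0$ and $[k_n:\mathbb{B}_n]=2$ is prime to $p$, that involution acts as $-1$ on $A(k_n)$, whence $j(A(k_n))\subseteq A(K_n)^{-}$; (ii) $\sigma$ fixes $k_n$, so $\sigma$ acts trivially on $j(A(k_n))$.

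The heart of the matter is to show that $j(A(k_n))$ generates $A(K_n)$ over $\mathbb{Z}_p[\langle\sigma\rangle]$. By Nakayama's lemma (which applies as $A(K_n)$ is a finite $p$-group, hence a module over the localization of $\mathbb{Z}_p[\langle\sigma\rangle]$ at its maximal ideal $(p,\sigma-1)$) it is enough that $j(A(k_n))$ surjects onto the coinvariants $A(K_n)_{\langle\sigma\rangle}$. This is where class field theory, and the hypothesis, come in. Since $K/k$ is unramified, so is the base change $K_n/k_n$; hence $K_n\subseteq L(k_n)\subseteq L(K_n)$, the maximal subfield of $L(K_n)$ abelian over $k_n$ is $L(k_n)$ itself, and therefore $A(K_n)_{\langle\sigma\rangle}$ (the largest quotient of $A(K_n)$ with trivial $\sigma$-action) is $\Gal(L(k_n)/K_n)$, the subgroup of $A(k_n)$ of index $p$ cut out by $K_n$ — and, \emph{because $r_p(X(k_n))=1$ makes $A(k_n)$ cyclic}, this subgroup is $pA(k_n)$. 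Moreover, by the functoriality of Artin reciprocity under base change, the composite $A(k_n)\xrightarrow{\,j\,}A(K_n)\twoheadrightarrow A(K_n)_{\langle\sigma\rangle}$ is the transfer attached to $\Gal(L(k_n)/K_n)\le\Gal(L(k_n)/k_n)$; as $\Gal(L(k_n)/k_n)=A(k_n)$ is abelian, this transfer is the $p$-th power map, of image $pA(k_n)$. Hence the composite is surjective, so $A(K_n)=\mathbb{Z}_p[\langle\sigma\rangle]\cdot j(A(k_n))$.

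Putting the pieces together: by (ii), $\sigma$ acts trivially on a $\mathbb{Z}_p[\langle\sigma\rangle]$-generating subset of $A(K_n)$, hence on all of $A(K_n)$, so $A(K_n)=\mathbb{Z}_p\cdot j(A(k_n))=j(A(k_n))$; by (i) this is contained in $A(K_n)^{-}$, so $A(K_n)^{+}=0$ and $X(F_n)$ is trivial. The parenthetical assertion is the case $n=0$: by (C3), $A(k)$ is cyclic, so $r_p(X(k_0))=1$ and $X(F)=0$. I expect the main obstacle to be the second paragraph — identifying $A(K_n)_{\langle\sigma\rangle}$ with a subgroup of $A(k_n)$ via class field theory and recognizing the connecting map as the transfer, so that cyclicity of $A(k_n)$ becomes surjectivity onto the coinvariants. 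One must also handle the $\tau$-eigenspaces with care: because $\sigma\tau\sigma^{-1}\ne\tau$, the splitting $A(K_n)=A(K_n)^{+}\oplus A(K_n)^{-}$ is \emph{not} $\sigma$-stable, which is precisely why the argument proceeds through the submodule $j(A(k_n))$ rather than through $A(K_n)^{-}$ directly. (Alternatively, the statement can be read off from Lemmermeyer's explicit analysis of class groups in unramified dihedral extensions in \cite{Lem}.)
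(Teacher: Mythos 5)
Your argument is correct, but it takes a genuinely different route from the paper: the paper's entire proof is a citation, observing that the statement is the special case of Lemmermeyer's Proposition~8.4 in \cite{Lem} obtained by applying it to the dihedral extension $K_n/\mathbb{B}_n$ (using that $p\nmid h(\mathbb{B}_n)$ and that $K_n/k_n$ is unramified). What you have written is in effect a self-contained reproof of that special case by genus theory. The skeleton is sound: $A(F_n)\cong A(K_n)^{+}$ for odd $p$; $j(A(k_n))\subseteq A(K_n)^{-}$ because complex conjugation acts as $-1$ on $A(k_n)$ (since $A(\mathbb{B}_n)=0$); $\sigma$ acts trivially on $j(A(k_n))$; and Nakayama reduces everything to surjectivity onto the $\sigma$-coinvariants, which you extract from cyclicity of $A(k_n)$. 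Two steps would need one more sentence each in a full write-up. First, the identification $A(K_n)_{\langle\sigma\rangle}\cong\Gal(L(k_n)/K_n)$ rests on the equality of the commutator subgroup of $\Gal(L(K_n)/k_n)$ with $(\sigma-1)A(K_n)$; the nonobvious inclusion holds because a central extension of a cyclic group by an abelian group is abelian. Second, the transfer can be avoided: under the Artin maps the projection $A(K_n)\to A(K_n)_{\langle\sigma\rangle}\cong\Gal(L(k_n)/K_n)$ is restriction of automorphisms to $L(k_n)$, i.e.\ the ideal norm $N_{K_n/k_n}$, and $N_{K_n/k_n}\circ j$ is multiplication by $p$ on $A(k_n)$, whose image is $pA(k_n)=\Gal(L(k_n)/K_n)$ by cyclicity --- the same surjectivity with less machinery. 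Either way you obtain $A(K_n)=j(A(k_n))\subseteq A(K_n)^{-}$, hence $A(K_n)^{+}=0$; your approach buys independence from \cite{Lem} (and shows in passing that $A(K_n)$ is a cyclic quotient of $A(k_n)$ with trivial $\sigma$-action), at the cost of reproving a result the paper prefers simply to quote.
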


\begin{proof}
This is a special case of \cite[Proposition 8.4]{Lem}. 
(Apply this proposition for $K_n / \mathbb{B}_n$.
Note that the class number of $\mathbb{B}_n$ is prime to $p$, and 
$K_n/k_n$ is an unramified extension.)
\end{proof}

\begin{proof}[Proof of Theorem \ref{prop_conj1}]
First, we shall show the only if part. 
We note that $X(F)$ is trivial by Proposition C.
Assume that $\lambda (k)=1$. 
Since $X (k_\infty) \cong \mathbb{Z}_p$ as a $\mathbb{Z}_p$-module, 
we see that $r_p (X (k_1)) = 1$.
Then, by using Proposition C for $n=1$, 
we see that $X(F_1)$ is trivial.
Since both $X(F)$ and $X(F_1)$ are trivial, we conclude that $X(F_\infty)$ is trivial. 
(See \cite[Theorem 1 (1)]{Fuk94}. 
Note that every prime lying above $p$ is totally ramified in $F_\infty /F$.)

Next, we will show the if part.
Assume that $X (F_\infty)$ is trivial.
Then $X (F_1)$ must be trivial. 
To see the assertion, 
we will show that $r_p (X(k_1)) = 1$ and then apply \cite[Theorem 1 (2)]{Fuk94}.

For a finite extension $\mathcal{K}'/\mathcal{K}$ of algebraic number fields, 
we denote by $N_{\mathcal{K}'/\mathcal{K}}$ the norm mapping 
from $\mathcal{K}'$ to $\mathcal{K}$ 
(elements or ideals).
We use the following result. 
This is a special case of \cite[Theorem 2.1]{Lem}.

\begin{theoremD}[see Lemmermeyer \cite{Lem}]
Let the assumptions be as in Theorem \ref{prop_conj1}.
If $X (F_1)$ is trivial and 
$N_{F_1/\mathbb{B}_1} E(F_1) = E(\mathbb{B}_1)$, then 
$r_p (X(k_1)) =1$.
\end{theoremD}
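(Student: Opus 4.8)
The plan is to read Theorem~D as an instance of genus theory for the dihedral extension $K_1/\mathbb{B}_1$. Recall that $K_1=K\mathbb{B}_1$ is Galois over $\mathbb{B}_1$ with Galois group dihedral of order $2p$, that $k_1$ is its quadratic subfield and $F_1$ a degree-$p$ (non-Galois) subfield, that $K_1/k_1$ is unramified cyclic of degree $p$, and that $p\nmid h(\mathbb{B}_1)$. Write $\sigma,\tau$ for the images in $\Gal(K_1/\mathbb{B}_1)$ of the elements fixed in Section~\ref{sub_Settings}, so $\langle\sigma\rangle=\Gal(K_1/k_1)$, $\langle\tau\rangle=\Gal(K_1/F_1)$ and $\tau\sigma\tau^{-1}=\sigma^{-1}$. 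Throughout I would combine the $\pm$-eigenspace decomposition for $\tau$ (available since $p$ is odd) with Chevalley's ambiguous class number formula.

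First I would exploit $X(F_1)=0$. As $[K_1:F_1]=2$ is prime to $p$, the usual norm/extension argument identifies $A(K_1)^{+}$ (the $\tau$-fixed part) with $A(F_1)\cong X(F_1)=0$; similarly $A(k_1)^{+}\cong A(\mathbb{B}_1)=0$ since $p\nmid h(\mathbb{B}_1)$. Hence $\tau$ acts by $-1$ on both $A(K_1)$ and $A(k_1)$. Inserting this into the dihedral relation gives, for $x\in A(K_1)$, $\sigma^{-1}x=(\tau\sigma\tau^{-1})x=\sigma x$ (because $\tau x=-x$), so $\sigma^{2}x=x$; as $A(K_1)$ is a $p$-group and $p$ is odd, $\sigma$ acts trivially on $A(K_1)$, and in particular $A(K_1)^{\langle\sigma\rangle}=A(K_1)$. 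Now apply Chevalley's formula to the unramified cyclic extension $K_1/k_1$ (there is no finite ramification, and none at infinity since $k_1,K_1$ are totally complex): it reads $|A(K_1)|=|A(K_1)^{\langle\sigma\rangle}|=|A(k_1)|/\bigl(p\cdot[E(k_1):E(k_1)\cap N_{K_1/k_1}K_1^{\times}]\bigr)$. On the other hand, class field theory for the unramified extension $K_1/k_1$ gives $A(k_1)/N_{K_1/k_1}A(K_1)\cong\Gal(K_1/k_1)\cong\mathbb{Z}/p$, so $|N_{K_1/k_1}A(K_1)|=|A(k_1)|/p$. Comparing the two identities forces the unit index to be $1$ and the norm map $N_{K_1/k_1}\colon A(K_1)\to A(k_1)$ to be injective with cokernel $\mathbb{Z}/p$. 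From the resulting exact sequence $0\to A(K_1)\to A(k_1)\to\mathbb{Z}/p\to 0$ one reads off that $r_p(A(k_1))$ equals $r_p(A(K_1))$ or $r_p(A(K_1))+1$, the alternative being governed by whether $A(k_1)[p]\subseteq N_{K_1/k_1}A(K_1)$.

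It remains to show that in fact $r_p(A(k_1))=1$ (it is at least $1$, since $p\mid h(k)$ forces $p\mid h(k_1)$), and this is the step I expect to carry the real weight — it is also the only place where the second hypothesis $N_{F_1/\mathbb{B}_1}E(F_1)=E(\mathbb{B}_1)$ enters. The idea would be to use this unit equality to control the genus number of the full degree-$2p$ extension $K_1/\mathbb{B}_1$ — equivalently the Hasse norm defect of $K_1/\mathbb{B}_1$, or the pertinent $H^{1}$ of units, tracked through $F_1$ — and thereby to force $A(k_1)$ to be cyclic, i.e.\ to bound $r_p(A(K_1))$ and resolve the containment $A(k_1)[p]\subseteq N_{K_1/k_1}A(K_1)$ in the favourable direction. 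Making this bookkeeping precise, with all the unit indices attached to the subfields of the dihedral extension kept straight, is exactly the content of \cite[Theorem~2.1]{Lem}, so in practice I would either reproduce Lemmermeyer's genus-theoretic computation or simply quote it. The main obstacle is therefore not the structural reductions above but this final quantitative point: extracting the exact value $r_p(A(k_1))=1$, rather than merely an inequality for $|A(k_1)|$, from the single unit-norm condition.
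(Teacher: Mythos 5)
First, a point of calibration: the paper does not prove Theorem~D at all — its entire ``proof'' is the sentence ``This is a special case of \cite[Theorem 2.1]{Lem}.'' So there is no internal argument to compare your steps against, and your ultimate fallback on the same citation is consistent with what the author does.

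Your preliminary reductions are correct and go beyond what the paper records: the identifications $A(K_1)^{\langle\tau\rangle}\cong A(F_1)=0$ and $A(k_1)^{\langle\tau\rangle}\cong A(\mathbb{B}_1)=0$, the deduction from $\tau\sigma\tau^{-1}=\sigma^{-1}$ and oddness of $p$ that $\sigma$ acts trivially on $A(K_1)$, and the comparison of Chevalley's formula for the unramified extension $K_1/k_1$ with the class-field-theoretic identity $|A(k_1)/N_{K_1/k_1}A(K_1)|=p$, yielding the exact sequence $0\to A(K_1)\to A(k_1)\to\mathbb{Z}/p\to 0$, are all sound. The problem is that the argument stops exactly where the hypothesis $N_{F_1/\mathbb{B}_1}E(F_1)=E(\mathbb{B}_1)$ has to do its work. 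Nothing you have established excludes, say, $A(K_1)\cong(\mathbb{Z}/p)^{\oplus 2}$ sitting inside an $A(k_1)$ of $p$-rank $2$ or $3$; and the unit hypothesis cannot be dispensed with, since Proposition~C is only the one-way implication $r_p(X(k_1))=1\Rightarrow X(F_1)=0$. Deferring precisely this step to \cite[Theorem 2.1]{Lem} means your write-up is not an independent proof — it invokes the cited theorem to finish (a reduction of) itself. As a reading of the paper your treatment is therefore at least as complete as the author's; as a self-contained argument it is missing the genus-theoretic bookkeeping that converts the index $[E(\mathbb{B}_1):N_{F_1/\mathbb{B}_1}E(F_1)]$ (together with $A(F_1)=0$) into the cyclicity of $A(k_1)$, which is the actual content of Lemmermeyer's theorem.
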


Hence it is sufficient to show that 
$N_{F_1/\mathbb{B}_1} E(F_1) = E(\mathbb{B}_1)$ 
under the assumption that $X (F_1)$ is trivial.
In the remaining part, we shall show this.

Recall that $p$ does not split in $k$, and 
the unique prime of $k$ lying above $p$ splits completely in $K$.
Then we can take a prime $\mathfrak{p}_F$  of $F$ 
which satisfies $F_{\mathfrak{p}_F} = \mathbb{Q}_p$.
Let $\mathfrak{p}_{F_1}$ be the unique prime of $F_1$ lying above $\mathfrak{p}_F$ 
(note that $\mathfrak{p}_F$ is totally ramified in $F_1/F$).
From our assumption, $h(F_1)$ (the class number of $F_1$) is not divisible by $p$.
Hence there is a positive integer $h$ (which is prime to $p$) such that 
$\mathfrak{p}_{F_1}^h$ is a principal ideal generated by an algebraic integer 
$\eta_{F_1}$ of $F_1$.

Fix a generator $\gamma$ of $\Gal (F_1/F)$ ($\cong \Gal (\mathbb{B}_1/\mathbb{Q})$).
Then $u:=\eta_{F_1}^{\gamma -1}$ is a unit of $F_1$, and 
hence $u' := N_{F_1/\mathbb{B}_1} u$ is a unit of $\mathbb{B}_1$.
Note that $N_{F_1/\mathbb{B}_1} u^{\gamma} =  (u')^{\gamma}$.

We claim that $u' \, E(\mathbb{B}_1)^p$ generates 
$E(\mathbb{B}_1)/E(\mathbb{B}_1)^p$ as 
a $\mathbb{F}_p [ \Gal (\mathbb{B}_1 /\mathbb{Q})]$-module.
Recall that the unique prime $\mathfrak{p}_{\mathbb{B}_1}$ of $\mathbb{B}_1$ 
lying above $p$ is a principal ideal generated by 
$\eta_{\mathbb{B}_1} := N_{\mathbb{Q} (\zeta_{p^2})/\mathbb{B}_1} (1-\zeta_{p^2})$, 
where $\zeta_{p^2}= e^{\frac{2 \pi \sqrt{-1}}{p^2}}$. 
Since $N_{F_1/\mathbb{B}_1} \mathfrak{p}_{F_1} = \mathfrak{p}_{\mathbb{B}_1}$, 
we can write $N_{F_1/\mathbb{B}_1} \eta_{F_1} = \eta_{\mathbb{B}_1}^h \, \varepsilon$ with a 
unit $\varepsilon$ of $\mathbb{B}_1$. 
It is known that $v := \eta_{\mathbb{B}_1}^{\gamma -1}$ generates $E(\mathbb{B}_1)/E(\mathbb{B}_1)^p$ 
as a $\mathbb{F}_p [ \Gal (\mathbb{B}_1 /\mathbb{Q})]$-module 
(see \cite{Sinnott}, \cite{Was}, and \cite[pp.204--205]{Kurihara}).
Since $u' = v^h \, \varepsilon^{\gamma -1}$ 
(and $h$ is prime to $p$), we see that 
$u' \, E(\mathbb{B}_1)^p$ generates 
$E(\mathbb{B}_1)/E(\mathbb{B}_1)^p$ modulo $(E(\mathbb{B}_1)/E(\mathbb{B}_1)^p)^{\gamma -1}$.
Then, by using Nakayama's lemma, the claim can be shown.

Since $N_{F_1/\mathbb{B}_1} E(F_1)$ contains $E(\mathbb{B}_1)^p$, 
the above facts yield that $N_{F_1/\mathbb{B}_1} E(F_1) = E(\mathbb{B}_1)$. 
Then, by using Theorem D, we see that $r_p (X(k_1))= 1 = r_p (X(k))$. 
From \cite[Theorem 1 (2)]{Fuk94} (see also its proof),
we obtain the fact that $X (k_\infty)$ is a (non-trivial) cyclic $\mathbb{Z}_p$-module.
Since $X (k_\infty)$ is not finite in this case, we conclude that 
$\lambda (k)=1$.
\end{proof}

\begin{proof}[Proof of Corollary \ref{conj1'}]
Throughout this proof, we assume that $p=3$.
We use the notation given in Section \ref{proof_thm_im2}.
(Especially, $\mathfrak{p}$ is the unique prime of $F$ lying above $3$ which  
satisfies $F_\mathfrak{p} = \mathbb{Q}_3$.)
In this situation, 
we can apply \cite[Theorem 8.1]{Hachi} for $F$ and $p=3$.
We note that $X (F)$ is trivial by Proposition C.
Thus we see that $X (F_\infty)$ is trivial if and only if 
$\mathcal{U}^1_{\mathfrak{p}} (F) /\overline{\mathcal{E}}$ is trivial.
We also see that 
$\mathcal{U}^1_{\mathfrak{p}} (F) /\overline{\mathcal{E}}$ is trivial 
if and only if $\varepsilon_F^2 \not\equiv 1 \pmod{\mathfrak{p}}$ 
(cf. \cite[Example 8.3]{Hachi}).
The assertion follows.
\end{proof}

\subsection{When is $\mathcal{X} (k_\infty)$ a Demu{\v s}kin group? 
($p \geq 3$)}\label{sec_im_D}
Let $k$ be an imaginary quadratic field and $p$ an odd prime number.
In this subsection, 
we consider the question whether 
$\mathcal{X} (k_\infty)$ is a Demu{\v s}kin group.

We assume that $\lambda(k)=2$ in this paragraph. 
Then $\mathcal{X} (k_\infty)^{ab} \cong \mathbb{Z}_p^{\oplus 2}$.
It can be seen that if a Demu{\v s}kin group $\mathcal{G}$ satisfies 
$d (\mathcal{G}) =2$ and $\mathcal{G}^{ab} \cong \mathbb{Z}_p^{\oplus 2}$, then 
$\mathcal{G}$ must be isomorphic to $\mathbb{Z}_p^{\oplus 2}$. 
(This follows from a well known property of Demu{\v s}kin groups. 
See e.g., \cite[(3.9.1)Theorem]{NSW}.)
Hence, this implies that $\mathcal{X} (k_\infty)$ is a Demu{\v s}kin group
if and only if $\mathcal{X} (k_\infty) = \mathcal{X} (k_\infty)^{ab}$.
Okano \cite{Okano} gave a criterion whether $\mathcal{X} (k_\infty)$ is abelian or not.
In particular, by using \cite[Theorem 1.1]{Okano}, 
if $p$ does not split in $k$ and $\lambda (k) =2$, then
$\mathcal{X} (k_\infty)$ is not a Demu{\v s}kin group.
(On the other hand, when $p$ splits in $k$, 
there is a pair $k$, $p$ such that 
$\mathcal{X} (k_\infty) \cong \mathbb{Z}_p^{\oplus 2}$,
which is an abelian Demu{\v s}kin group.)

We will consider the case when $\lambda (k) \geq 3$.

\begin{rem}\label{odd_lambda}
Since $p$ is odd, the generator rank of a Demu{\v s}kin group must be even 
(see, e.g., \cite[(3.9.16)Proposition]{NSW}).
Hence, if $\lambda (k)$ is odd, then $\mathcal{X} (k_\infty)$ cannot be a Demu{\v s}kin group.
\end{rem}

When $p$ does not split in $k$, we can obtain the following criterion.

\begin{thm}\label{im_non_D}
Let $k$, $K$, and $F$ be as in Theorem \ref{thm_im1}.
Assume that $\lambda (k) \geq 4$ and $\lambda (k)$ is even.
If 
\[ \lambda (F) \leq \dfrac{(p-1) \lambda (k) - 2p}{2}, \] 
then $\mathcal{X} (k_\infty)$ is not a Demu{\v s}kin group.
(It is also not a free pro-$p$ group by Theorem \ref{thm_im1}.)
\end{thm}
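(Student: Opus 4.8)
The plan is to mimic the proof of Theorem~\ref{thm_im1} almost verbatim, replacing the free pro-$p$ relation by the Demu\v{s}kin relation from Lemma~\ref{pro-ell}~(ii). So I would start by assuming, for contradiction, that $\mathcal{X}(k_\infty)$ is a Demu\v{s}kin group. Since $\lambda(k)\geq 4$, in particular $d(\mathcal{G})=\lambda(k)\geq 2$, so Lemma~\ref{pro-ell}~(ii) applies. As in the proof of Theorem~\ref{thm_im1}, put $\mathcal{G}=\mathcal{X}(k_\infty)$ and $U=\mathcal{X}(K_\infty)$; because $K/k$ is unramified we still have $\mathcal{L}(K_\infty)=\mathcal{L}(k_\infty)$, so $U$ is an open subgroup of $\mathcal{G}$ of index $p$. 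By Lemma~\ref{pro-ell}~(ii), $U$ is again a Demu\v{s}kin group and
\[ d(U) - 2 = p(\lambda(k) - 2), \]
hence $d(U) = p\lambda(k) - 2p + 2$.

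Next I would extract a lower bound for $\lambda(K) = \rkzp U^{ab}$. The subtlety compared with the free case is that for a Demu\v{s}kin group $\rkzp U^{ab}$ is $d(U)$ or $d(U)-1$, not exactly $d(U)$. So I only get
\[ \lambda(K) = \rkzp U^{ab} \geq d(U) - 1 = p\lambda(k) - 2p + 1. \]
This is where the hypothesis is tuned: I want this lower bound to contradict an upper bound for $\lambda(K)$ coming from Konomi's homomorphism $\phi$. That upper bound is identical to the one in the proof of Theorem~\ref{thm_im1}: using the exact sequence with $\sigma-1$, the bound $\rkzp(\kernel\phi)\leq\rkzp X(K_\infty)^{\langle\sigma\rangle}=\rkzp X(K_\infty)_{\langle\sigma\rangle}\leq\lambda(k)$, Lemma~\ref{Konomi_property}, and $X(K_\infty)^+\cong X(F_\infty)$, one gets
\[ \lambda(K) = \lambda^+(K) + \lambda^-(K) \leq \lambda(k) + 2\lambda(F). \]

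Finally I would combine the two. From the displayed hypothesis $2\lambda(F)\leq (p-1)\lambda(k)-2p$, the upper bound yields $\lambda(K)\leq \lambda(k)+(p-1)\lambda(k)-2p = p\lambda(k)-2p$, which contradicts the lower bound $\lambda(K)\geq p\lambda(k)-2p+1$. That finishes the proof; the parenthetical remark that $\mathcal{X}(k_\infty)$ is also not free follows from Theorem~\ref{thm_im1}, since the hypothesis here is stronger than the one there (note $(p-1)\lambda(k)-2p < (p-1)\lambda(k)-p$). The main obstacle — and really the only difference from Theorem~\ref{thm_im1} — is the loss of $1$ in passing from $d(U)$ to $\rkzp U^{ab}$ for Demu\v{s}kin groups; this is exactly why the bound on $\lambda(F)$ is shifted by $p/2$ (i.e.\ why $-p$ becomes $-2p$ in the numerator) and why one needs $\lambda(k)\geq 4$ rather than $\lambda(k)\geq 2$. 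One should double-check the edge constraints: $\lambda(k)\geq 4$ guarantees $d(\mathcal{G})\geq 2$ and also that the right-hand side $\frac{(p-1)\lambda(k)-2p}{2}$ is large enough to be a meaningful (nonnegative, for $p\geq 3$) constraint, and the evenness of $\lambda(k)$ is imposed so as not to conflict with Remark~\ref{odd_lambda}.
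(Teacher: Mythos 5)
Your proposal is correct and matches the paper's own proof essentially line for line: the same application of Lemma \ref{pro-ell}~(ii) to $U=\mathcal{X}(K_\infty)$, the same lower bound $\lambda(K)\geq d(U)-1=p\lambda(k)-2p+1$, and the same upper bound $\lambda(K)\leq\lambda(k)+2\lambda(F)$ recycled from the proof of Theorem \ref{thm_im1}. Your closing remarks about why the numerator shifts from $-p$ to $-2p$ and why $\lambda(k)\geq 4$ is imposed are also accurate.
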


\begin{proof}
Suppose that $\mathcal{X} (k_\infty)$ is a Demu{\v s}kin group.
We note that $U = \mathcal{X} (K_\infty)$ is an open subgroup of $\mathcal{X} (k_\infty)$.
Then by Lemma \ref{pro-ell} (ii), we see that $U$ is also a Demu{\v s}kin group and  
\[ d (U) = p \lambda (k) - 2p + 2 \]
(recall that $d (\mathcal{X} (k_\infty)) = \lambda (k)$). 
We note that $d (U) \geq 2$ because $\lambda (k) \geq 4$.
By applying the first assertion of 
Lemma \ref{pro-ell} (ii) for $U$, we see that 
$\lambda (K) = \rkzp U^{ab} \geq d (U) - 1$.
Hence, 
\[ \lambda (K) \geq p \lambda (k) - 2p + 1. \]
On the other hand, if $2 \lambda (F) \leq (p-1) \lambda (k) - 2p$, 
then we obtain the following inequality 
\[ \lambda (K) \leq \lambda (k) + 2 \lambda (F) \leq p \lambda (k) - 2p \]
by using the same argument given in the proof of Theorem \ref{thm_im1}.
It is a contradiction.
\end{proof}

\section{Computations for imaginary quadratic fields}\label{sec_computation}

Let $k$ be an imaginary quadratic field, and put $p = 3$.
Assume that $k$ and $p$ satisfy (C1) and (C2) in Section \ref{intro}.
We will give computational results for Conjecture A.

\subsection{Remarks on computations}
The computations were done by using Magma \cite{MagmaHB} and PARI/GP \cite{PARI}.
For several computations, the author used both Magma and PARI/GP independently and 
checked that these results are the same.
However, for example, the author used PARI/GP only for 
the computation of the Iwasawa $\lambda$-invariant of imaginary quadratic fields 
(because it was due to the function \texttt{Iwapoly} 
made by Y.~Mizusawa \cite{Miz_Iwapoly}).
For the Magma computations, the author referenced the source codes 
written by the co-author of \cite{I-T}.
The author used PARI/GP version 2.9.4 at the time 
that the first manuscript of the present paper was written, however, 
he re-computed the examples by using PARI/GP version 2.11.0 
at the time of revising.

Note that the ideal class groups were computed without assuming GRH 
(except for the computations given in Remark \ref{rem_GRH}).
That is, the author used the command \texttt{ClassGroup} with \texttt{Proof:="Full"} 
for Magma computations, 
and used \texttt{bnfcertify} for PARI/GP computations.

\subsection{Computations for Conjecture A ($p=3$)}\label{computation_conjA}
We assume that $p=3$.
Let $K/k$ be an unramified cubic cyclic extension, and $F$  
an intermediate field of $K/\mathbb{Q}$ of degree $3$.
The author used \texttt{AbelianpExtension} (Magma), 
\texttt{rnfkummer} (PARI/GP) to construct $K$.

Before stating examples, 
we shall give some remarks on the structure of $X (F_\infty)$.
We can show that $\mu (F)=0$ (see Section \ref{sub_Settings}). 
However, it seems a mysterious question whether $\lambda (F) =0$ or not. 
Since $F$ is not totally real, this question lies outside of Greenberg's conjecture. 
On the other hand, $p$ does not split completely in $F$, and hence the result given in 
\cite[p.266]{Gre76} is not applicable 
(i.e., it cannot be said that $\lambda (F)>0$ always holds).
However, we can determine that $\lambda (F)=0$ for some cases by using 
an analog of known criteria (cf., e.g., \cite{Gre76}, \cite{F-K}).

\begin{prop}\label{lambda=0}
Assume that $p=3$.
Let $k$ and $F$ be as in Theorem \ref{thm_im2}, 
$D(F_n)$ as in the paragraph before Theorem \ref{thm_im2}, 
and $e(F)$ as in Definition \ref{unit_F}.
If $|D(F_n)|= |A(F)| \cdot 3^{e(F)-1}$ for some $n \geq 0$, then $\lambda(F)=0$.
\end{prop}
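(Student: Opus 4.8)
The plan is to reuse the machinery from the proof of Theorem~\ref{thm_im2}, but now run the inequalities in the opposite direction. Recall that in that proof we established the bound
\[
|A(F_n)^{\Gamma_n}| = |A(F_n)_{\Gamma_n}| \leq |\mathfrak{X}_{\{\mathfrak{p}\}}(F_n)_{\Gamma_n}|
= |\mathfrak{X}_{\{\mathfrak{p}\}}(F)| = |A(F)| \cdot 3^{e(F)-1},
\]
where the first equality comes from the Herbrand quotient being trivial for the finite module $A(F_n)$ under the procyclic group $\Gamma_n$, the middle inequality from $A(F_n)$ being a $\Gamma_n$-equivariant quotient of $\mathfrak{X}_{\{\mathfrak{p}\}}(F_n)$, and the last equality from $\mathfrak{X}_{\{\mathfrak{p}\}}(F_n)_{\Gamma_n} \cong \mathfrak{X}_{\{\mathfrak{p}\}}(F)$. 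First I would observe that $D(F_n)$, being a subgroup of $A(F_n)^{\Gamma_n}$, satisfies $|D(F_n)| \leq |A(F_n)^{\Gamma_n}| \leq |A(F)| \cdot 3^{e(F)-1}$; so the hypothesis $|D(F_n)| = |A(F)| \cdot 3^{e(F)-1}$ forces equality all the way through, in particular $D(F_n) = A(F_n)^{\Gamma_n}$ and the surjection $\mathfrak{X}_{\{\mathfrak{p}\}}(F_n)_{\Gamma_n} \twoheadrightarrow A(F_n)_{\Gamma_n}$ is an isomorphism for this particular $n$.

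Next I would leverage the fact that the primes above $3$ are totally ramified in $F_\infty/F$ to push this equality up the tower. The point is that $D(F_n) = \langle c(\mathfrak{p}_n), c(\mathfrak{p}'_n)\rangle \cap A(F_n)$ is generated (inside $A(F_n)$) by classes of the ramified primes, and for $m \geq n$ the natural map $A(F_n) \to A(F_m)$ sends $c(\mathfrak{p}_n)$ to a power of $c(\mathfrak{p}_m)$ (up to the totally ramified behaviour), so that $D(F_n)$ maps into $D(F_m)$. Combined with the uniform upper bound $|D(F_m)| \leq |A(F)| \cdot 3^{e(F)-1}$ valid for all $m$, one concludes that $|D(F_m)|$ stabilises and the transition maps $D(F_n) \to D(F_m)$ are isomorphisms for all $m \geq n$. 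More importantly, I would argue that the chain of inequalities above being an equality at level $n$ propagates: since $\mathfrak{X}_{\{\mathfrak{p}\}}(F_m)_{\Gamma_m} \cong \mathfrak{X}_{\{\mathfrak{p}\}}(F)$ has constant order independent of $m$, and $A(F_m)_{\Gamma_m}$ surjects from it with the surjection already an isomorphism at level $n$, a standard argument (the order of $A(F_m)_{\Gamma_m}$ is nondecreasing in $m$ and bounded above by $|\mathfrak{X}_{\{\mathfrak{p}\}}(F)|$) shows $A(F_m)_{\Gamma_m} \cong \mathfrak{X}_{\{\mathfrak{p}\}}(F)$ for all $m \geq n$, hence $|A(F_m)^{\Gamma_m}|$ is bounded.

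Finally I would invoke the standard descent criterion for Greenberg-type vanishing: if $|A(F_m)^{\Gamma_m}|$ (equivalently $|A(F_m)_{\Gamma_m}|$) is bounded as $m \to \infty$, then $X(F_\infty)$ is finite, and since $\mu(F) = 0$ (shown in Section~\ref{sub_Settings}) this gives $\lambda(F) = 0$. Concretely, boundedness of $|A(F_m)_{\Gamma_m}| = |(X(F_\infty))_{\Gamma_m}|$ for the $\mathbb{Z}_p[[\Gamma]]$-module $X(F_\infty)$, together with the totally ramified prime above $p$ which makes $X(F_\infty)_{\Gamma_m} \twoheadrightarrow X(F_m)$ eventually (Nakayama/Iwasawa descent), forces $X(F_\infty)$ to be finitely generated over $\mathbb{Z}_p$ with trivial free part, i.e.\ $\lambda(F) = 0$; this is exactly the type of argument in \cite[p.267, Corollary]{Gre76} that was already cited in the proof of Theorem~\ref{thm_im2}. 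The main obstacle I anticipate is the second step: making precise the claim that the single equality $|D(F_n)| = |A(F)|\cdot 3^{e(F)-1}$ at one level $n$ really does force the coinvariants $A(F_m)_{\Gamma_m}$ to be of bounded order for \emph{all} $m$ — one must check carefully that $D(F_n)$ surjects onto (or at least has image of full size in) $A(F_m)^{\Gamma_m}$, using that the extra contribution to $A(F_m)^{\Gamma_m}$ beyond classes of ramified primes is controlled by $\mathfrak{X}_{\{\mathfrak{p}\}}(F)$, whose order we have pinned down exactly. Once that is in hand, the rest is the routine boundedness-implies-finiteness argument.
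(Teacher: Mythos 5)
Your opening move is right and matches the paper: the unconditional bound (\ref{ineq_A(F_n)}) squeezes the hypothesis into the equality $D(F_n)=A(F_n)^{\Gamma_n}$ at the given level. But the two steps after that each contain a genuine gap. First, to propagate the equality up the tower you use the lifting maps $A(F_n)\to A(F_m)$ and assert that they restrict to isomorphisms $D(F_n)\to D(F_m)$ because the orders are bounded above. This is a non sequitur, and in fact the opposite happens: since $\mathfrak{p}_n$ is totally ramified, its extension to $F_m$ is $\mathfrak{p}_m^{3^{m-n}}$, so the lifting map sends $c(\mathfrak{p}_n)$ to $c(\mathfrak{p}_m)^{3^{m-n}}$, and the proof of Theorem \ref{thm_im2} shows precisely that the classes in $D(F_n)$ \emph{capitulate} in $A(F_m)$ for large $m$ --- the lifting maps on $D$ are eventually trivial, not bijective. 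The paper goes the other way: the \emph{norm} maps $D(F_m)\to D(F_n)$ are surjective (total ramification gives $N_{F_m/F_n}\mathfrak{p}_m=\mathfrak{p}_n$), which yields the lower bound $|D(F_m)|\geq |D(F_n)|$ and hence, combined with (\ref{ineq_A(F_n)}), the equality $A(F_m)^{\Gamma_m}=D(F_m)$ for \emph{all} $m\geq n$.

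Second, and more seriously, your concluding step reduces everything to the claim that boundedness of $|A(F_m)^{\Gamma_m}|$ forces $X(F_\infty)$ to be finite. That implication is false here, and it would prove far too much: the bound $|A(F_m)^{\Gamma_m}|\leq |A(F)|\cdot 3^{e(F)-1}$ of (\ref{ineq_A(F_n)}) holds \emph{unconditionally} for every such $F$, so if bounded ambiguous classes implied $\lambda(F)=0$ the proposition would need no hypothesis at all (and the paper explicitly treats the vanishing of $\lambda(F)$ as open in general). Module-theoretically, finiteness of $X_\Gamma$ does not imply finiteness of $X$ (consider $\Lambda/(T-p)$), so no Nakayama-type descent applies. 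What the argument of \cite[Theorem 2]{Gre76} actually needs is not boundedness but the identity $A(F_m)^{\Gamma_m}=D(F_m)$ for all large $m$ --- that every ambiguous class comes from the ramified primes, whose classes die in the limit. That identity is exactly what the norm-surjectivity argument supplies and what your propagation step fails to establish.
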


\begin{proof}
Under the assumptions, we see that 
$|A(F_m)^{\Gamma_m}|=|D(F_m)| =|A(F)| \cdot 3^{e(F)-1}$ for all $m \ge n$ by using 
the inequality (\ref{ineq_A(F_n)}) given in the proof of Theorem \ref{thm_im2}. 
and the fact that the mapping $D (F_m) \to D(F_n)$ 
(induced from the norm) is surjective.
Hence by using the argument given in the proof of \cite[Theorem 2]{Gre76}, 
we can conclude that $\lambda (F)=0$.
\end{proof}

\begin{example}\label{ex1}
Put $k=\mathbb{Q} (\sqrt{-211})$ and $p=3$.
Then $k$ and $3$ satisfy (C1) and (C2).
We see that $|A(k)|=3$ and $\lambda (k)=2$.
Then $K$ is the unique unramified cyclic cubic extension over $k$.
In this case, 
we found that $|A(F_1)|=|D(F_1)|=3$ 
(note that the degree of $F_1/\mathbb{Q}$ is $9$).
Then, by Theorem \ref{thm_im2}, we see that Conjecture A holds for this case.
(Alternatively, we see that $|A(F)|=1$ and $e(F)=2$, then 
$\lambda (F)=0$ by Proposition \ref{lambda=0}.
Hence we can also apply Theorem \ref{thm_im1}.)
\end{example}

\begin{example}\label{ex2}
Put $k=\mathbb{Q} (\sqrt{-274})$ and $p=3$.
This pair satisfy (C1) and (C2).
We see that $|A(k)|=3$ and $\lambda (k)=4$.
Take $K$ and $F$ similarly ($K$ is uniquely determined).
We found that $|A(F_1)|=|D(F_1)|=3$.
Hence Conjecture A follows from Theorem \ref{thm_im2}.
Moreover, we see that $|A(F)|=1$ and $e(F)=2$, and then 
$\lambda (F)=0$ by Proposition \ref{lambda=0}.
From this, by using Theorem \ref{im_non_D}, we also see that 
$\mathcal{X} (k_\infty)$ is not a Demu{\v s}kin group.
\end{example}

At this time, Theorem \ref{thm_im2} seems more useful than Theorem \ref{thm_im1} (when $p=3$).
If there were an example of trivial $X (F_\infty)$ (with $\lambda (k) \geq 2$), 
it could be an example 
such that Theorem \ref{thm_im1} is applicable but Theorem \ref{thm_im2} is not applicable.
Unfortunately, there are no such examples (Theorem \ref{prop_conj1}).
Note also that \cite[Theorem 1 (2)]{Fuk94} is applicable  
to find an upper bound of $\lambda (F)$, even when $D(F_n)$ is trivial.
(That is, if $r_p (A(F_n)) =r_p (A(F_{n+1}))$ for some $n$, then $\lambda (F) \leq r_p (A(F_n))$.)
However, we need the information of $A(F_n)$ for a sufficiently large $n$
(in particular, when $|A(F)|=1$ and $|A(F_1)| > 1$, we must compute 
$A(F_n)$ for $n \geq 2$).

\subsection{Comparison of Theorem \ref{thm_im2} and Ozaki's criterion}\label{Ozaki}
We shall recall Ozaki's result \cite[Proposition 5]{Oza07}. 
In this paragraph, let $p$ be an odd prime number which does not split in $k$ 
(imaginary quadratic field).
Let $L^{(2)} (k_n)$ be the maximal unramified abelian $p$-extension over $L(k_n)$ 
such that 
\begin{itemize}
\item $L^{(2)} (k_n)$ is a Galois extension over $k_n$, and 
\item $\Gal(L^{(2)} (k_n)/L(k_n))$ is 
contained in the center of $\Gal (L^{(2)} (k_n) / k_n)$.
\end{itemize}
We put $X^{(2)} (k_n) = \Gal(L^{(2)} (k_n) / L(k_n))$.
Then $X^{(2)} (k_n)$ is the same as $X^{(2)}_n$ defined in \cite[p.62]{Oza07}.
We also denote by $D^{(2)} (k_n)$ the subgroup of $X^{(2)} (k_n)$ which is 
generated by the decomposition subgroups for the primes lying above $p$.
Then, Ozaki's result says that $\mathcal{X} (k_\infty)$ is not a free pro-$p$ group 
if both of the following conditions are satisfied: 
\begin{itemize}
\item $D^{(2)} (k_n)$ is not trivial for some $n$, and  
\item a certain condition on the $p$-adic $L$-function.
\end{itemize}
Actually, \cite[Proposition 5]{Oza07} gives a more precise result 
for the structure of $\mathcal{X} (k_\infty)$.

As remarked before, the above criterion seems close to our Theorem \ref{thm_im2}.
Note that Theorem \ref{thm_im2} does not need the condition on the $p$-adic $L$-function 
(but only works for the case when $p=3$).
We also mention that the non-triviality of $D(F_n)$ does not imply the 
non-triviality of $D^{(2)} (k_n)$ (for a fixed $n$).
In the following, we shall give an example.

\begin{example}\label{ex_9934}
Put $k= \mathbb{Q} (\sqrt{-9934})$ and $p=3$.
We note that $A(k) \cong (\mathbb{Z}/3 \mathbb{Z})^{\oplus 2}$.
Let $K$ be the splitting field of the polynomial 
\[ x^3 - x^2 - 39x - 109 \]
over $\mathbb{Q}$ 
(the author used PARI/GP to obtain this polynomial).
Then $K/k$ is an unramified cyclic cubic extension.
Let $F$ be a cubic extension of $\mathbb{Q}$ which is contained in $K$.
Then, $|A(F)|=9$ and $|D(F)|=3$ 
(it was checked by both PARI/GP and Magma).

On the other hand, we can see that $L^{(2)} (k) / L(k)$ is a cyclic cubic extension 
from the fact that $A (k) \wedge A(k) \cong \mathbb{Z}/3 \mathbb{Z}$
(see \cite{Oza07}).
Let $L^{sp} (K)/K$ be the maximal unramified abelian $3$-extension in which every 
prime lying above $3$ splits completely.
By using Magma,
it was checked that  
\[ \Gal (L^{sp} (K)/K) \cong \mathbb{Z}/9 \mathbb{Z} \oplus \mathbb{Z}/3 \mathbb{Z}. \]
$\Gal (L^{sp} (K)/L(k))$ is non-trivial.
We note that $L^{sp} (K)$ is a Galois extension over $k$.
Then the quotient of $\Gal (L^{sp} (K)/L(k))$ such that $\Gal (L(k)/k)$ acts trivially 
is not trivial.
Since $|\Gal (L^{(2)} (k)/L(k))|=3$, we can conclude that $L^{(2)} (k)$ is an intermediate field of 
$L^{sp} (K)/L(k)$.
Recall that every prime lying above $3$ splits completely in 
$L^{sp} (K) /L(k)$. 
Thus we see that $D^{(2)} (k)$ is trivial.

From the fact that $|D(F)|=3$, we see that 
$\mathcal{X} (k_\infty)$ is not a free pro-$3$ group by using Theorem \ref{thm_im2}.
However, since $D^{(2)} (k)$ is trivial, Ozaki's criterion is not applicable 
at least for $n=0$.
\end{example}

\begin{rem}\label{rem_GRH}
For the case when $p=3$ and 
$k = \mathbb{Q} (\sqrt{-211})$ or $k = \mathbb{Q} (\sqrt{-274})$ 
(recall Examples \ref{ex1} and \ref{ex2}), 
the author computed the ideal class group of $K_1$ by using Magma assuming GRH. 
(That is, the command \texttt{ClassGroup} was used with \texttt{Proof:="GRH"}.
Note that the degree of $K_1/\mathbb{Q}$ is $18$.)
We define $L^{sp} (K_1) /K_1$ similarly to the above.
For both cases, we found that $L^{sp} (K_1) = L(k_1)$ and $L (K_1) \neq L^{sp} (K_1)$. 
From these facts, we can deduce that $D^{(2)} (k_1)$ is not trivial (under GRH).
\end{rem}

See also the computational results given in \cite[pp.83--85]{Oza07}
(our computations given in 
Example \ref{ex_9934} and Remark \ref{rem_GRH} were done 
based on Ozaki's computation).

\section{Proof of Theorem \ref{relationship_GC} ($p \geq 2$)}\label{sec_real_GC}
In this section, $p$ denotes an arbitrary prime number. 
(In particular, we allow the case when $p=2$.)
Let the notation ($k', S, \mathbb{L}, \mathcal{G}$) be as in Theorem \ref{relationship_GC}.
First, we will show Theorem \ref{relationship_GC} when 
$\mathcal{G}$ satisfies the condition (D) 
(i.e., $\mathcal{G}$ is a Demu{\v s}kin group with $d (\mathcal{G}) \geq 3$).

\subsection{When $\mathcal{G}$ satisfies (D)}
Assume that $\mathcal{G} = \Gal (\mathbb{L}/k'_\infty)$ satisfies (D).
We need some preparations to state our proof.
Denote by $\mathbb{M}/ k'_\infty$ a finite extension contained in $\mathbb{L}$.
Let $\mathbb{L}_{\mathbb{M}}^{ab} /\mathbb{M}$ be the maximal abelian pro-$p$ extension 
contained in $\mathbb{L}$. 
We recall that $S (\mathbb{M})$ is the set of primes of $\mathbb{M}$ lying above $S$.
(Note that $S (\mathbb{M})$ is a finite set.)
For $v \in S (\mathbb{M})$, let $I (\mathbb{M}, v)$ be the 
inertia subgroup of $\Gal (\mathbb{L}_{\mathbb{M}}^{ab} /\mathbb{M})$ for $v$.
Since $v$ is not lying above $p$, we see that 
$I (\mathbb{M}, v)$ is finite or isomorphic to $\mathbb{Z}_p$ 
(see, e.g., \cite{NSW}).
We remark that if $I (\mathbb{M}, v) \cong \mathbb{Z}_p$, then 
$I (\mathbb{M}', v') \cong \mathbb{Z}_p$ for every finite extension 
$\mathbb{M}'/ \mathbb{M}$ contained in $\mathbb{L}$ and every prime $v' \in S (\mathbb{M}')$ 
lying above $v$.

We denote by $S_1 (k'_\infty)$ the maximal subset of $S (k'_\infty)$ 
which satisfies the following property: 
\begin{itemize}
\item[(I)] for every $v \in S_1 (k'_\infty)$ there exists a finite extension 
$\mathbb{M}/k'_\infty$ contained in $\mathbb{L}$ 
and a prime $v' \in S(\mathbb{M})$ lying above $v$ such that 
$I (\mathbb{M}, v') \cong \mathbb{Z}_p$.
\end{itemize}
We also put $S_2 (k'_\infty) = 
S (k'_\infty) \backslash S_1 (k'_\infty)$. 
That is, 
\begin{itemize}
\item[(II)] if $v \in S_2 (k'_\infty)$, then $I (\mathbb{M}, v')$ is finite  
for any finite extension $\mathbb{M}/k'_\infty$ contained in 
$\mathbb{L}$ and any prime $v' \in S (\mathbb{M})$ lying above 
$v \in S_2 (k'_\infty)$.
\end{itemize}
For a finite extension $\mathbb{M}/k'_\infty$ contained in $\mathbb{L}$, 
we denote by $S_1 (\mathbb{M})$ (resp. $S_2 (\mathbb{M})$) the set of primes of $\mathbb{M}$ 
lying above $S_1 (k'_\infty)$ (resp. $S_2 (k'_\infty)$).
Of course, $S (\mathbb{M}) = S_1 (\mathbb{M}) \cup S_2 (\mathbb{M})$ and 
$S_1 (\mathbb{M}) \cap S_2 (\mathbb{M}) = \emptyset$.

\begin{lem}\label{lem_K0}
Let the notation be as in Theorem \ref{relationship_GC}.
We also assume that $\mathcal{G}$ satisfies (D) of Theorem \ref{relationship_GC}.
Then, there is a 
finite extension $\mathbb{K}^{(0)}/k'_\infty$ contained in $\mathbb{L}$ 
satisfying the following properties:
\begin{itemize}
\item $I (\mathbb{K}^{(0)}, v') \cong \mathbb{Z}_p$ 
for every $v' \in S_1 (\mathbb{K}^{(0)})$, and 
\item $I (\mathbb{K}^{(0)}, v')$ is finite  
for every $v' \in S_2 (\mathbb{K}^{(0)})$.
\end{itemize}
\end{lem}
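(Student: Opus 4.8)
The plan is to build $\mathbb{K}^{(0)}$ by a finite accumulation procedure. The set $S_1(k'_\infty)$ is finite (it is a subset of $S(k'_\infty)$, which has finitely many elements since $S$ is finite and each prime of $k'_\infty$ above $S$ has, in the tower $k'_\infty/k'$, only finitely many primes above a given rational prime---indeed $k'_\infty/k'$ is a $\mathbb{Z}_p$-extension and a prime not above $p$ splits into finitely many primes in it). So enumerate $S_1(k'_\infty) = \{v_1, \ldots, v_s\}$. By the defining property (I), for each $v_j$ there is a finite extension $\mathbb{M}_j/k'_\infty$ inside $\mathbb{L}$ and a prime $w_j \in S(\mathbb{M}_j)$ above $v_j$ with $I(\mathbb{M}_j, w_j) \cong \mathbb{Z}_p$. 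First I would take $\mathbb{K}^{(0)}$ to be (a finite extension of $k'_\infty$ inside $\mathbb{L}$ containing) the compositum $\mathbb{M}_1 \cdots \mathbb{M}_s$; this is still a finite extension of $k'_\infty$ contained in $\mathbb{L}$ because $\mathcal{G}$ is finitely generated and each $\mathbb{M}_j$ corresponds to an open subgroup.

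The first key step is to check that $I(\mathbb{K}^{(0)}, v') \cong \mathbb{Z}_p$ for every $v' \in S_1(\mathbb{K}^{(0)})$. Fix such a $v'$; it lies above some $v_j \in S_1(k'_\infty)$, and since $\mathbb{K}^{(0)} \supseteq \mathbb{M}_j$, the prime $v'$ lies above some prime $w$ of $\mathbb{M}_j$ above $v_j$. One shows that $I(\mathbb{M}_j, w) \cong \mathbb{Z}_p$ for \emph{every} prime $w$ of $\mathbb{M}_j$ above $v_j$, not just the one named $w_j$: this uses the $\mathbb{Z}_p[[\Gal]]$-equivariance of the inertia subgroups under the action of the Galois group of $\mathbb{M}_j$ over $k'_\infty$ (conjugate primes have conjugate, hence isomorphic, inertia groups; and if $\mathbb{M}_j/k'_\infty$ is not Galois one passes to its Galois closure, still a finite extension inside $\mathbb{L}$). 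Then invoke the remark stated just before the lemma: if $I(\mathbb{M}, v) \cong \mathbb{Z}_p$ then $I(\mathbb{M}', v') \cong \mathbb{Z}_p$ for every finite extension $\mathbb{M}'/\mathbb{M}$ inside $\mathbb{L}$ and every $v'$ above $v$. Applying this with $\mathbb{M} = \mathbb{M}_j$, $\mathbb{M}' = \mathbb{K}^{(0)}$ gives the claim. The second key step is the bullet about $S_2(\mathbb{K}^{(0)})$: this is immediate from property (II), which says inertia at every prime above $v \in S_2(k'_\infty)$ is finite in \emph{any} finite extension inside $\mathbb{L}$, in particular in $\mathbb{K}^{(0)}$; no further work is needed.

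The main obstacle I anticipate is the equivariance/descent bookkeeping in the first step---making precise that ``$I(\mathbb{M}_j, w_j) \cong \mathbb{Z}_p$ for one prime $w_j$'' propagates to all primes above $v_j$ and then stays $\cong \mathbb{Z}_p$ after enlarging the field. The cleanest way around it is probably to avoid choosing an intermediate $\mathbb{M}_j$ at all: observe that property (I) is really a statement about the fixed field of $\mathbb{L}^{ab}_{\mathbb{M}}$, and since $I(\mathbb{M}, v')$ can only grow from finite to $\mathbb{Z}_p$ (never shrink) as $\mathbb{M}$ increases, and is bounded by $\mathbb{Z}_p$, it \emph{stabilizes} at $\mathbb{Z}_p$ after a finite stage; taking $\mathbb{K}^{(0)}$ past all $s$ of these finite stages simultaneously (possible by finite generation of $\mathcal{G}$) does the job. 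One should also double-check that the hypothesis (D) on $\mathcal{G}$ is genuinely used here, or whether the lemma holds verbatim under (F) as well---my expectation is that this particular lemma uses only that $\mathcal{G}$ is a finitely generated pro-$p$ group, and the restriction to (D) in the statement is simply because this is the (D)-branch of the proof of Theorem \ref{relationship_GC}; I would phrase the argument so that it does not secretly rely on $\mathcal{G}$ being Demu\v{s}kin.
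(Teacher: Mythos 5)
Your proposal is correct and follows essentially the same route as the paper: the paper also takes, for each $v \in S_1(k'_\infty)$, a witnessing field $\mathbb{M}$, passes to its Galois closure over $k'_\infty$ so that the inertia condition holds at \emph{every} prime above $v$, and then forms the (finite) compositum over the finitely many $v \in S_1(k'_\infty)$. Your additional observations --- that the second bullet is immediate from (II), and that the argument uses only finite generation rather than the Demu{\v s}kin hypothesis --- are accurate but do not change the argument.
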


\begin{proof}
Take $v \in S_1 (k'_\infty)$ and 
$\mathbb{M}$ which satisfy $I (\mathbb{M}, v') \cong \mathbb{Z}_p$ for a prime $v'$ of 
$\mathbb{M}$ lying above $v$.
Then the Galois closure $\mathbb{M} (v)$ of $\mathbb{M}$ over $k'_\infty$ 
is a finite extension of $k'_\infty$ contained in $\mathbb{L}$, 
and satisfies the following property:
$I (\mathbb{M}(v), v'') \cong \mathbb{Z}_p$ for every 
$v'' \in S (\mathbb{M}(v))$ lying above $v$.

We put $\mathbb{K}^{(0)} = \bigcup_{v \in S_1 (k'_\infty)} \mathbb{M} (v)$, 
then $\mathbb{K}^{(0)}$ satisfies the desired properties.
\end{proof}

Let $\mathbb{K}^{(0)}$ be as in Lemma \ref{lem_K0}.
Put $U^{(0)} = \Gal (\mathbb{L}/\mathbb{K}^{(0)})$.
By Lemma \ref{pro-ell} (ii), $U^{(0)}$ is also a Demu{\v s}kin group, and satisfies 
\[ d (U^{(0)}) - 2 = (\mathcal{G} : U^{(0)}) (d(\mathcal{G}) -2 ). \]
Since $d(\mathcal{G}) \geq 3$, we also see that $d (U^{(0)}) \geq 3$.
We note that for every finite extension $\mathbb{M}'/\mathbb{K}^{(0)}$ contained 
in $\mathbb{L}$, $I (\mathbb{M}', v') \cong \mathbb{Z}_p$ 
(resp. $I (\mathbb{M}', v')$ is finite) for every 
$v' \in S_1 (\mathbb{M}')$ (resp. $v' \in S_2 (\mathbb{M}')$).

\begin{lem}\label{lem_K_infty}
Let the notation be as in Theorem \ref{relationship_GC}.
Assume that $\mathcal{G}$ satisfies (D) of Theorem \ref{relationship_GC}, 
and take $\mathbb{K}^{(0)}$ as in Lemma \ref{lem_K0}.
Then, there is an infinite Galois 
extension $\mathbb{K}^{(\infty)}/\mathbb{K}^{(0)}$ 
contained in $\mathbb{L}$ 
such that 
\begin{itemize}
\item $\Gal (\mathbb{K}^{(\infty)}/\mathbb{K}^{(0)}) \cong \mathbb{Z}_p$, and 
\item for every $v' \in S_1 (\mathbb{K}^{(0)})$, the inertia subgroup 
of $\Gal (\mathbb{K}^{(\infty)}/\mathbb{K}^{(0)})$ for $v'$ has finite index.
\end{itemize} 
\end{lem}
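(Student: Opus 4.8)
The plan is to obtain $\mathbb{K}^{(\infty)}$ as a suitable $\mathbb{Z}_p$-subextension of the maximal abelian pro-$p$ extension of $\mathbb{K}^{(0)}$ inside $\mathbb{L}$. Write $\mathfrak{X} = \Gal(\mathbb{L}_{\mathbb{K}^{(0)}}^{ab}/\mathbb{K}^{(0)})$, which is the maximal abelian pro-$p$ quotient $(U^{(0)})^{ab}$ of $U^{(0)}$. Since $U^{(0)}$ is a Demu{\v s}kin group with $d(U^{(0)}) \geq 3$, Lemma \ref{pro-ell} (ii) shows that $\mathfrak{X}$ is a finitely generated $\mathbb{Z}_p$-module whose rank is $d(U^{(0)})$ or $d(U^{(0)})-1$, hence at least $2$; put $V = \mathfrak{X} \otimes_{\mathbb{Z}_p} \mathbb{Q}_p$. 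Any continuous surjection $\psi : \mathfrak{X} \twoheadrightarrow \mathbb{Z}_p$ determines, via its kernel, an infinite Galois (indeed $\mathbb{Z}_p$-) extension $\mathbb{K}^{(\infty)}/\mathbb{K}^{(0)}$ inside $\mathbb{L}_{\mathbb{K}^{(0)}}^{ab} \subseteq \mathbb{L}$, and the inertia subgroup of a prime $v'$ in $\Gal(\mathbb{K}^{(\infty)}/\mathbb{K}^{(0)})$ is exactly the image $\psi(I(\mathbb{K}^{(0)}, v'))$ (the inertia group downstairs is the image of the inertia group upstairs, and in an abelian extension it does not depend on the chosen prime above $v'$). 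So it suffices to build $\psi$ so that $\psi(I(\mathbb{K}^{(0)}, v')) \neq 0$ for every $v' \in S_1(\mathbb{K}^{(0)})$, since a nonzero closed subgroup of $\mathbb{Z}_p$ has finite index.

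First I would note that $S_1(\mathbb{K}^{(0)})$ is finite, being contained in the finite set $S(\mathbb{K}^{(0)})$, and that for each $v' \in S_1(\mathbb{K}^{(0)})$ the group $I(\mathbb{K}^{(0)}, v')$ is isomorphic to $\mathbb{Z}_p$ by the defining property of $\mathbb{K}^{(0)}$ in Lemma \ref{lem_K0}. Being infinite, $I(\mathbb{K}^{(0)}, v')$ is not contained in the finite torsion submodule of $\mathfrak{X}$, so its image in $V$ is a line $\ell_{v'}$. Then I would apply the standard fact that a vector space over the infinite field $\mathbb{Q}_p$ is not a finite union of proper subspaces: the finitely many hyperplanes $\ell_{v'}^{\perp} \subset V^{*}$ do not cover $V^{*}$, so there is a linear form $f_0 \in V^{*}$ with $f_0(\ell_{v'}) \neq 0$ for all $v' \in S_1(\mathbb{K}^{(0)})$. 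Composing $\mathfrak{X} \to V \xrightarrow{f_0} \mathbb{Q}_p$ gives a homomorphism whose image is a nonzero finitely generated $\mathbb{Z}_p$-submodule $p^{a}\mathbb{Z}_p$ of $\mathbb{Q}_p$; rescaling by $p^{-a}$ yields the desired surjection $\psi : \mathfrak{X} \twoheadrightarrow \mathbb{Z}_p$, which is still nonzero on each $I(\mathbb{K}^{(0)}, v')$.

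Finally I would take $\mathbb{K}^{(\infty)}$ to be the fixed field of $\ker \psi$ in $\mathbb{L}_{\mathbb{K}^{(0)}}^{ab}$; by the discussion above it has all the required properties, and as a bonus every $v' \in S_2(\mathbb{K}^{(0)})$ is unramified in $\mathbb{K}^{(\infty)}/\mathbb{K}^{(0)}$ because $I(\mathbb{K}^{(0)}, v')$ is finite and $\mathbb{Z}_p$ is torsion-free. The argument is short, and I do not anticipate a serious obstacle: the only points needing a little care are the compatibility of inertia subgroups with the quotient $\mathfrak{X} \to \mathfrak{X}/\ker\psi$ (routine for abelian extensions) and the finiteness of $S_1(\mathbb{K}^{(0)})$; the linear-algebra core — choosing $f_0$ to avoid finitely many hyperplanes — is entirely standard.
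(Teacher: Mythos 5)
Your proposal is correct and follows essentially the same route as the paper's proof: both pass to the maximal abelian pro-$p$ extension $\mathbb{L}_{\mathbb{K}^{(0)}}^{ab}$ inside $\mathbb{L}$, use Lemma \ref{pro-ell} (ii) to see that $\Gal(\mathbb{L}_{\mathbb{K}^{(0)}}^{ab}/\mathbb{K}^{(0)})$ is a finitely generated $\mathbb{Z}_p$-module of rank at least $2$ containing each $I(\mathbb{K}^{(0)},v')$, $v'\in S_1(\mathbb{K}^{(0)})$, as a copy of $\mathbb{Z}_p$, and then cut out a $\mathbb{Z}_p$-quotient on which every such inertia subgroup has finite-index image. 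The only divergence is at that last step, which the paper delegates to Kataoka's Lemma 3.3 (while explicitly remarking that a much weaker statement suffices), and which you prove directly via the standard fact that a finite-dimensional $\mathbb{Q}_p$-vector space is not a finite union of proper subspaces; your self-contained version is valid.
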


\begin{proof}
Recall that $\mathbb{L}_{\mathbb{K}^{(0)}}^{ab}/\mathbb{K}^{(0)}$ 
is the maximal abelian pro-$p$ extension contained in $\mathbb{L}$. 
We also note that 
$\Gal (\mathbb{L}_{\mathbb{K}^{(0)}}^{ab}/\mathbb{K}^{(0)})$ is finitely generated 
as a $\mathbb{Z}_p$-module.
Let $\widetilde{\mathbb{K}}$ be  
the intermediate field of 
$\mathbb{L}_{\mathbb{K}^{(0)}}^{ab}/\mathbb{K}^{(0)}$ corresponding to the 
maximal $\mathbb{Z}_p$-torsion subgroup of 
$\Gal (\mathbb{L}_{\mathbb{K}^{(0)}}^{ab}/\mathbb{K}^{(0)})$.
Then $\Gal (\widetilde{\mathbb{K}}/\mathbb{K}^{(0)})$ 
is isomorphic to $\mathbb{Z}_{p}^{\oplus m}$ 
(where $m$ is equal to $d (U^{(0)})$ or $d (U^{(0)})-1$, and then $m \geq 2$).
Since $\mathbb{L}_{\mathbb{K}^{(0)}}^{ab}/\widetilde{\mathbb{K}}$ is finite, 
the inertia subgroup of $\Gal (\widetilde{\mathbb{K}}/\mathbb{K}^{(0)})$ for 
every $v' \in S_1 (\mathbb{K}^{(0)})$ is isomorphic to $\mathbb{Z}_p$.

Then the assertion follows by using Kataoka's result \cite[Lemma 3.3]{Kat} 
(by setting $M$ as $\Gal (\widetilde{\mathbb{K}}/\mathbb{K}^{(0)})$ and $L_j$ as the 
inertia subgroups). 
Actually, we only need a much weaker result than Kataoka's, 
because it is sufficient to find one extension $\mathbb{K}^{(\infty)}/\mathbb{K}^{(0)}$.
\end{proof}

\begin{proof}[Proof of Theorem \ref{relationship_GC} when $\mathcal{G}$ satisfies (D)]
Take $\mathbb{K}^{(0)}$ and $\mathbb{K}^{(\infty)}$ be as in above lemmas.
Then there is a sequence of extensions 
\[ \mathbb{K}^{(0)} \subset \mathbb{K}^{(1)} \subset \mathbb{K}^{(2)}
\subset \cdots \subset \mathbb{K}^{(n)} \subset \cdots \subset \mathbb{K}^{(\infty)} \]
such that 
\begin{itemize}
\item $\Gal (\mathbb{K}^{(n)}/\mathbb{K}^{(0)}) \cong \mathbb{Z}/ p^n \mathbb{Z}$ 
for all $n$, and 
\item if $n$ is sufficiently large, then 
every $v' \in S_1 (\mathbb{K}^{(0)})$ actually ramifies in $\mathbb{K}^{(n)}/\mathbb{K}^{(0)}$.
\end{itemize}
Let $s_n$ be the cardinality of $S_1 (\mathbb{K}^{(n)})$.
The above facts imply that $s_n$ is bounded as $n \to \infty$.
Fix a positive integer $C$ satisfying $s_n < C$ for all $n$.

We put $U^{(n)} = \Gal (\mathbb{L}/\mathbb{K}^{(n)})$, 
and $\mathbb{L}^{(n),ab} = \mathbb{L}_{\mathbb{K}^{(n)}}^{ab}$ 
(the maximal abelian pro-$p$ extension of $\mathbb{K}^{(n)}$ contained in $\mathbb{L}$).
By using Lemma \ref{pro-ell} (ii), we see that $U^{(n)}$ is also a Demu{\v s}kin group and 
\[ d (U^{(n)}) - 2 = p^n (d(U^{(0)}) -2 ). \]
Recall that $d(U^{(0)}) \geq 3$, 
and then an inequality $d (U^{(n)}) \geq p^n + 2$ holds.
Since $\Gal (\mathbb{L}^{(n),ab} /\mathbb{K}^{(n)}) \cong (U^{(n)})^{ab}$, 
the $\mathbb{Z}_p$-rank of $\Gal (\mathbb{L}^{(n),ab} /\mathbb{K}^{(n)})$ is 
greater than $p^n$.

Let $\mathcal{I}^{(n)}$ be the $\mathbb{Z}_p$-submodule of 
$\Gal (\mathbb{L}^{(n),ab} /\mathbb{K}^{(n)})$ 
generated by $I (\mathbb{K}^{(n)}, v)$ for all $v \in S (\mathbb{K}^{(n)})$.
Since $\mathbb{L}^{(n),ab} /\mathbb{K}^{(n)}$ is unramified outside $S (\mathbb{K}^{(n)})$, 
the fixed field $\mathbb{L}^{(n),ab}_{unr}$ of $\mathcal{I}^{(n)}$ is an unramified 
abelian pro-$p$ extension over $\mathbb{K}^{(n)}$.
By using the fact which is noted before, we see that 
$I (\mathbb{K}^{(n)}, v) \cong \mathbb{Z}_p$ 
(resp. $I (\mathbb{K}^{(n)}, v)$ is finite) for every 
$v' \in S_1 (\mathbb{K}^{(n)})$ (resp. $v' \in S_2 (\mathbb{K}^{(n)})$).
Hence the $\mathbb{Z}_p$-rank of $\mathcal{I}^{(n)}$ is smaller than $C$.

Take $n$ such that $p^n > C$.
By combining the above facts, we see that 
the $\mathbb{Z}_p$-rank of $\Gal (\mathbb{L}^{(n),ab}_{unr} /\mathbb{K}^{(n)})$ is positive.
This implies that $X (\mathbb{K}^{(n)})$ is infinite.
We recall that $\mathbb{K}^{(n)} / k'_\infty$ is a finite extension.
As noted in \cite{Iwa81} (without proof), 
there exists a finite extension $k''$ of $\mathbb{Q}$ such that 
$\mathbb{K}^{(n)}$ is the cyclotomic $\mathbb{Z}_p$-extension of $k''$.
Since any archimedean prime does not ramify in 
$\mathbb{K}^{(n)}/k'_\infty$, we see that this $k''$ is totally real.
Then the theorem (when $\mathcal{G}$ satisfies (D)) follows.
\end{proof}

\subsection{When $\mathcal{G}$ satisfies (F)}
Next, we will show the remaining case.

\begin{proof}[Proof of Theorem \ref{relationship_GC} when $\mathcal{G}$ satisfies (F)]
In fact, we can also see the assertion for this case quite similarly 
(by using Theorem \ref{pro-ell} (i) instead of Theorem \ref{pro-ell} (ii)). 

As an alternative proof, we can see that the existence of the objects satisfying (F)
implies the existence of the objects satisfying (D).
That is, if there is a triple $k'$, $S$, $\mathbb{L}$ satisfying (F), then 
we can take intermediate fields $\mathbb{M}$ and $\mathbb{L}'$ of $\mathbb{L}/k'_\infty$ 
such that 
$\mathbb{M}/k'_\infty$ is a finite extension and $\Gal (\mathbb{L}'/\mathbb{M})$ is a 
Demu{\v s}kin group with $d(\Gal (\mathbb{L}'/\mathbb{M})) \geq 3$.
(Note that $\mathbb{M}$ is the cyclotomic $\mathbb{Z}_p$-extension of a 
certain totally real field.)
\end{proof}

\section{Proof of Theorem \ref{thm_real_quad} ($p \geq 3$)}\label{sec_real_quadratic}

Let $k'$ and $S$ be as in Theorem \ref{thm_real_quad}.
We assume that $p$ is an odd prime number such that $p$ is inert in $k'$ and 
does not divide $h(k')$.
(Hence $X (k'_\infty)$ is trivial by Iwasawa's theorem \cite{Iwa56}.)
Moreover, we assume that $X_{\{q_1\}} (k')$ and $X_{\{q_2 \}} (k')$ are trivial.

\subsection{Several preliminary facts}\label{real_quad_pre}

\begin{lem}\label{lem_real1}
Let the assumptions be as in Theorem \ref{thm_real_quad}.
Then 
\[ X_{\{q_1, q_2 \}} (k') \cong \mathbb{Z}/p \mathbb{Z},\; 
X_{S} (k') \cong (\mathbb{Z}/p \mathbb{Z})^{\oplus r-1}, \;
X_{\{q_1, q_2 \}} (k'_\infty) \cong \mathbb{Z}_p, \; \text{and} \; 
X_S (k'_\infty) \cong \mathbb{Z}_p^{\oplus r-1} \]
as $\mathbb{Z}_p$-modules.
\end{lem}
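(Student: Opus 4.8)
Here is how I would approach the proof.

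The plan is to get the two isomorphisms over $k'$ from class field theory at the ground level, and then the two over $k'_\infty$ by descent from a known structure on the cyclotomic tower.

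\emph{Ground level.} For a number field $F$ and a finite set $\Sigma$ of primes not above $p$, class field theory (the limit over conductor exponents of the ray class group sequences) gives an exact sequence of $\mathbb{Z}_p$-modules
\[
E(F)\otimes\mathbb{Z}_p \xrightarrow{\ \rho_\Sigma\ } \bigoplus_{v\in\Sigma(F)}\mathcal{U}_v \longrightarrow X_\Sigma(F) \longrightarrow A(F) \longrightarrow 0,
\]
$\mathcal{U}_v$ being the $p$-Sylow subgroup of $\mathcal{O}_{F_v}^\times$ and $\rho_\Sigma$ the diagonal map. I would apply this with $F=k'$, $\Sigma=S$: here $A(k')=0$; each $q_i$ is inert in $k'$, so $N\mathfrak{q}_i=q_i^2$, and $q_i\equiv-1\pmod p$ together with $q_i^2\not\equiv1\pmod{p^2}$ force $v_p(q_i^2-1)=1$, hence $\mathcal{U}_{\mathfrak{q}_i}\cong\mathbb{Z}/p\mathbb{Z}$; and $E(k')\otimes\mathbb{Z}_p\cong\mathbb{Z}_p$ is generated by (the class of) a fundamental unit $\varepsilon$ (the factor $\{\pm1\}$ dies, $p$ being odd). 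The hypotheses $X_{\{q_1\}}(k')=X_{\{q_2\}}(k')=0$ say precisely that $\varepsilon$ maps onto a generator of $\mathcal{U}_{\mathfrak{q}_1}$ and of $\mathcal{U}_{\mathfrak{q}_2}$, so $\mathrm{Image}(\rho_S)$ is the cyclic subgroup of $(\mathbb{Z}/p\mathbb{Z})^{\oplus r}$ generated by $(\varepsilon\bmod\mathfrak{q}_1,\dots,\varepsilon\bmod\mathfrak{q}_r)$, of order exactly $p$. Hence $X_S(k')\cong(\mathbb{Z}/p\mathbb{Z})^{\oplus r-1}$, and the same computation with $\Sigma=\{q_1,q_2\}$ gives $X_{\{q_1,q_2\}}(k')\cong\mathbb{Z}/p\mathbb{Z}$.

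\emph{Cyclotomic level.} Put $\Gamma=\Gal(k'_\infty/k')$, $\Lambda=\mathbb{Z}_p[[\Gamma]]$, and fix a topological generator $\gamma_0$ of $\Gamma$. Recall $\mu(k')=0$ (Ferrero--Washington) and $X(k'_\infty)=0$ (Iwasawa, since $p$ is inert in $k'$ and $p\nmid h(k')$). First I would observe that each $q_i$ is inert in $k'_\infty/k'$: the conditions $q_i\equiv-1\pmod p$, $q_i\not\equiv-1\pmod{p^2}$ make the Frobenius of $q_i$ --- which acts as $q_i^2$ on $p$-power roots of unity --- a topological generator of $\Gamma$. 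Hence there is a unique, $\Gamma$-stable prime $\mathfrak{q}_i$ of $k'_\infty$ above $q_i$, of infinite residue degree, whose inertia subgroup $I_{\mathfrak{q}_i}\subseteq X_S(k'_\infty)$ is a $\Lambda$-submodule that is finitely generated over $\mathbb{Z}_p$ (indeed $I_{\mathfrak{q}_i}\cong\mathbb{Z}_p$) and on which $\gamma_0$ acts as multiplication by $q_i^2\equiv1\pmod p$ (its action on the $p$-part of the residue-field roots of unity). Since $X(k'_\infty)=0$, $X_S(k'_\infty)$ is generated over $\Lambda$ by $I_{\mathfrak{q}_1},\dots,I_{\mathfrak{q}_r}$; in particular it is finitely generated over $\mathbb{Z}_p$, and $(\gamma_0-1)X_S(k'_\infty)\subseteq pX_S(k'_\infty)$ because $q_i^2-1\in p\mathbb{Z}_p$. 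Now I would invoke two facts: (a) the control theorem --- the only prime ramifying in $k'_\infty/k'$ being the unique, totally ramified prime above $p$ --- yields isomorphisms $X_S(k'_\infty)_\Gamma\cong X_S(k')$ and $X_{\{q_1,q_2\}}(k'_\infty)_\Gamma\cong X_{\{q_1,q_2\}}(k')$; (b) under the present hypotheses, $X_S(k'_\infty)$ (and likewise $X_{\{q_1,q_2\}}(k'_\infty)$) has no nonzero $\mathbb{Z}_p$-torsion, by the methods of \cite{Itoh18}. From (b), $X_S(k'_\infty)\cong\mathbb{Z}_p^{\oplus a}$ as a $\mathbb{Z}_p$-module for some $a$; since $(\gamma_0-1)X_S(k'_\infty)\subseteq pX_S(k'_\infty)$, the $p$-rank of the (finite, by (a)) coinvariant $X_S(k'_\infty)_\Gamma$ equals $a$; and this coinvariant is, by (a), $X_S(k')\cong(\mathbb{Z}/p\mathbb{Z})^{\oplus r-1}$. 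Hence $a=r-1$, i.e. $X_S(k'_\infty)\cong\mathbb{Z}_p^{\oplus r-1}$, and the same argument gives $X_{\{q_1,q_2\}}(k'_\infty)\cong\mathbb{Z}_p$.

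\emph{Where the difficulty lies.} The first (ground-level) paragraph is routine; the weight falls on inputs (a) and (b). I expect (b), the $\mathbb{Z}_p$-torsion-freeness of $X_S(k'_\infty)$, to be the genuine obstacle: it is exactly the behaviour the numerical conditions on the $q_i$ are designed to produce, and it must be read off from \cite{Itoh18} --- which is devoted to this torsion question --- or re-derived by its techniques for a set $S$ of cardinality $r\ge 3$. Input (a) is standard, but must be set up with care, since the primes of $S$, while unramified in $k'_\infty/k'$, do ramify in $L_S(k'_\infty)/k'_\infty$. Finally, a small bookkeeping point underlies the whole second paragraph: that $X_S(k'_\infty)=\varprojlim_n X_S(k'_n)$ and that the ground-level sequences are compatible with the norm maps, which uses $A(k'_n)=0$ for all $n$ (again a consequence of Iwasawa's theorem, as above).
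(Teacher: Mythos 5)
Your proof is correct and follows essentially the same route as the paper: the ground-level isomorphisms come from the identical class-field-theoretic exact sequence (with the triviality of $X_{\{q_1\}}(k')$ and $X_{\{q_2\}}(k')$ forcing the image of the fundamental unit to generate the relevant local factors, so that the image of the global units is cyclic of order exactly $p$), and the infinite-level ones rest on the $\mathbb{Z}_p$-torsion-freeness of $X_S(k'_\infty)$ imported from \cite{Itoh18}, exactly as in the paper's citation of \cite[Corollary 4.2]{Itoh18}. The only cosmetic difference is that you compute the $\mathbb{Z}_p$-ranks by hand via $\Gamma$-coinvariants and the inclusion $(\gamma_0-1)X_S(k'_\infty)\subseteq pX_S(k'_\infty)$, whereas the paper quotes the rank formula of \cite[Example 6.6]{Itoh14}.
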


\begin{proof}
Put $R_{k'} (q_i) = (O_{k'}/ q_i O_{k'})^\times \otimes_{\mathbb{Z}} \mathbb{Z}_p$ for 
$i=1, \ldots, r$ (where $O_{k'}$ is the ring of integers of $k'$).
We note that $R_{k'} (q_i)$ is a cyclic group of order $p$.
Then, by class field theory, we obtain the exact sequence
\[ E(k') \otimes_{\mathbb{Z}} \mathbb{Z}_p \overset{f}{\to} 
R_{k'} (q_1) \oplus R_{k'} (q_2) \to X_{\{q_1, q_2 \}} (k') \to 0 \]
(recall that $X (k')$ is trivial).
By the assumption that $X_{\{ q_1 \}} (k')$ is trivial, we see that 
the natural mapping $E(k') \otimes_{\mathbb{Z}} \mathbb{Z}_p \to R_{k'} (q_1)$ is surjective.
From this, we see that the cokernel of $f$ is a cyclic group of order $p$ 
(note that the image of $f$ is cyclic).
The assertion that $X_{\{q_1, q_2 \}} (k') \cong \mathbb{Z}/p \mathbb{Z}$ follows.
The assertion that $X_S (k') \cong (\mathbb{Z}/p \mathbb{Z})^{\oplus r-1}$ can be 
shown similarly.
(See also Remark 4.3 of \cite{Itoh18}.)

The remaining two assertions can be shown by using \cite[Corollary 4.2]{Itoh18}.
Note that the $\mathbb{Z}_p$-rank can be computed by using the formula given in 
\cite[Example 6.6]{Itoh14} 
(see also the proof of \cite[Theorem 1.2]{Itoh18}).
\end{proof}

We note that $L_{\{ q_1, q_2 \}} (k')$ is a Galois extension over $\mathbb{Q}$ of degree $2p$.
We can see that $\Gal (L_{\{ q_1, q_2 \}} (k')/\mathbb{Q})$ is isomorphic to the dihedral group 
of order $2p$.
(If it is a cyclic group, then there exists a cyclic extension 
of degree $p$ over $\mathbb{Q}$ unramified outside $\{ q_1, q_2 \}$.
However, since $q_1 \equiv q_2 \equiv -1 \pmod{p}$,
such an extension does not exist.)

\begin{definition}\label{def_KH}
Let the notation be as in Theorem \ref{thm_real_quad}.
For an integer $n \geq 1$, we put $\zeta_{p^n} = e^{\frac{2 \pi \sqrt{-1}}{p^n}}$.
We also put 
\[ K' = L_{\{ q_1, q_2 \}} (k'),  \quad H = K' (\zeta_p), \quad \text{and} \quad 
H^+ = K' (\zeta_p + \zeta_p^{-1}). \]
Since $K'$ is totally real, $H$ is a CM-field and 
$H^+$ is the maximal real subfield of $H$.
\end{definition}

From the facts that $p$ is inert in $k'$ and 
$\Gal (K'/\mathbb{Q})$ is isomorphic to the dihedral group of order $2p$, 
we see that the unique prime of $k'$ lying above $p$ splits completely in $K'$.
Similarly, since $q_j \, (j =3, \ldots, r)$ is also inert in $k'$, 
then the unique prime of $k'$ lying above $q_j$ splits completely in $K'$.
On the other hand, since $X_{\{ q_1 \}} (k')$ (resp. $X_{\{ q_2 \}} (k')$) is trivial, 
the unique prime of $k'$ lying above $q_1$ (resp. $q_2$) must ramify in $K'$.

We also remark that every prime of $K'$ lying above $p$ is totally ramified in $K'_\infty/K'$, 
and every prime of $K'$ lying above $q_i \, (i =1, \ldots, r)$ does not split 
in $K'_{\infty} /K'$.
(Recall the assumptions that $q_i \equiv -1 \pmod{p}$ and 
$q_i^2 \not\equiv 1 \pmod{p^2}$. 
From these, 
$q_i$ does not split in $\mathbb{B}_\infty/\mathbb{Q}$.)

\begin{lem}\label{lem_real2}
Let the assumptions be as in Theorem \ref{thm_real_quad}, and 
$K'$ as in Definition \ref{def_KH}.
Then $X(K'_\infty)$ is trivial.
\end{lem}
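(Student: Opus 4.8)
The plan is to run a genus-theoretic descent along the cyclic degree-$p$ extension $K'_\infty/k'_\infty$, the only substantive input being Lemma \ref{lem_real1} together with the ramification facts recorded just before the statement.

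First I would put $G = \Gal(K'_\infty/k'_\infty)$, a cyclic group of order $p$ generated by some $\sigma$. As $X(K'_\infty)$ is finitely generated over $\mathbb{Z}_p[[\Gamma]]$, hence over the local ring $\mathbb{Z}_p[[\Gamma]][G]$, Nakayama's lemma reduces the assertion to showing that the $G$-coinvariants $X(K'_\infty)_G$ vanish. Let $L(K'_\infty)/K'_\infty$ be the maximal unramified abelian pro-$p$ extension and $\widetilde{L}$ the fixed field of $(\sigma - 1)X(K'_\infty)$ in $L(K'_\infty)$, so that $\Gal(\widetilde{L}/K'_\infty) \cong X(K'_\infty)_G$. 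Because $G$ is cyclic, $(\sigma-1)X(K'_\infty)$ is the closed commutator subgroup of $\Gal(L(K'_\infty)/k'_\infty)$ — indeed $\Gal(L(K'_\infty)/k'_\infty)/(\sigma-1)X(K'_\infty)$ is an extension of the cyclic group $G$ by the module $X(K'_\infty)_G$ on which $G$ acts trivially, and such an extension is abelian since an alternating pairing on a cyclic group is trivial. Hence $\widetilde{L}/k'_\infty$ is an abelian pro-$p$ extension; being unramified over $K'_\infty$, it is unramified over $k'_\infty$ outside the primes above $q_1$ and $q_2$ (recall that $K'/k'$, and therefore $K'_\infty/k'_\infty$, is unramified at $p$ and at the $q_j$ with $j \ge 3$). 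Thus $\widetilde{L} \subseteq L_{\{q_1,q_2\}}(k'_\infty)$.

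Next I would invoke Lemma \ref{lem_real1}, which gives $Y := \Gal(L_{\{q_1,q_2\}}(k'_\infty)/k'_\infty) \cong \mathbb{Z}_p$. Since $K'_\infty/k'_\infty$ is an abelian pro-$p$ extension of degree $p$ unramified outside $\{q_1,q_2\}$, it is the unique degree-$p$ subextension of $L_{\{q_1,q_2\}}(k'_\infty)/k'_\infty$; in particular $\Gal(L_{\{q_1,q_2\}}(k'_\infty)/K'_\infty)$ is the unique index-$p$ closed subgroup of $Y$. Let $w$ be a prime of $k'_\infty$ above $q_1$ (the case of $q_2$ is identical). Since $q_1$ ramifies in $K'/k'$ while $k'_\infty/k'$ is unramified at $q_1$, the prime $w$ is totally ramified in $K'_\infty/k'_\infty$; hence its inertia subgroup $I_w \subseteq Y$ is a closed subgroup not contained in $\Gal(L_{\{q_1,q_2\}}(k'_\infty)/K'_\infty)$. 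By the classification of closed subgroups of $\mathbb{Z}_p$ this forces $I_w = Y$, so $w$ is totally ramified in $L_{\{q_1,q_2\}}(k'_\infty)/K'_\infty$ as well. But $\widetilde{L}/K'_\infty$ is an unramified subextension of the latter, so $\widetilde{L} = K'_\infty$, i.e. $X(K'_\infty)_G = 0$; therefore $X(K'_\infty)$ is trivial.

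The heart of the matter is Lemma \ref{lem_real1}: its conclusion that the $\mathbb{Z}_p$-rank of $X_{\{q_1,q_2\}}(k'_\infty)$ is exactly $1$ is what leaves no room for a nontrivial genus field above $K'_\infty$, and this is where the triviality of $X_{\{q_1\}}(k')$ and $X_{\{q_2\}}(k')$ is really used. The only step needing a little care is the identification of $(\sigma-1)X(K'_\infty)$ with the commutator subgroup of $\Gal(L(K'_\infty)/k'_\infty)$, which depends on $G$ being cyclic (equivalently $[K':k']$ prime); the remainder is Nakayama's lemma, the structure of closed subgroups of $\mathbb{Z}_p$, and the ramification bookkeeping.
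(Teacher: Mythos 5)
Your argument is correct, and it rests on the same two pillars as the paper's own proof: Lemma \ref{lem_real1}, which gives $X_{\{q_1,q_2\}}(k'_\infty) \cong \mathbb{Z}_p$ and hence identifies $K'_\infty$ as the unique degree-$p$ subextension of $L_{\{q_1,q_2\}}(k'_\infty)/k'_\infty$, and the total ramification of the prime above $q_1$ in $K'_\infty/k'_\infty$, which forces the inertia subgroup at $q_1$ to be all of $\Gal(L_{\{q_1,q_2\}}(k'_\infty)/k'_\infty) \cong \mathbb{Z}_p$ and so leaves no room for an unramified subextension above $K'_\infty$. Where you genuinely diverge is in the reduction to abelian extensions of $k'_\infty$: you run the standard genus-theoretic descent, i.e.\ Nakayama's lemma over the local ring $\mathbb{Z}_p[[\Gamma]][G]$ together with the fact that a central extension of a cyclic group is abelian, so that only $X(K'_\infty)_G$ needs to be captured inside $L_{\{q_1,q_2\}}(k'_\infty)$. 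The paper short-circuits this step: since $\mathcal{X}_{\{q_1,q_2\}}(k'_\infty)$ has procyclic abelianization it is itself procyclic, hence $\mathcal{L}_{\{q_1,q_2\}}(k'_\infty) = L_{\{q_1,q_2\}}(k'_\infty)$, and then \emph{all} of $L(K'_\infty)$ — being a pro-$p$ extension of $k'_\infty$ unramified outside $\{q_1,q_2\}$ — lies inside $L_{\{q_1,q_2\}}(k'_\infty)$ with no need to pass to coinvariants. The paper's shortcut is slicker but is tied to the rank-one situation; your Nakayama reduction is the tool that would survive (at the cost of extra bookkeeping at the ramification step) if $X_{\{q_1,q_2\}}(k'_\infty)$ had larger $\mathbb{Z}_p$-rank. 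Both proofs are complete and correct.
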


\begin{proof}
We note that $K'_\infty$ is the unique intermediate field of 
$L_{\{q_1, q_2\}} (k'_\infty) /k'_\infty$ of degree $p$ over $k'_\infty$ 
(recall that $X_{\{q_1, q_2\}} (k'_\infty) \cong \mathbb{Z}_p$ 
by Lemma \ref{lem_real1}).
Moreover, we also see that 
$\mathcal{L}_{\{q_1, q_2\}} (k'_\infty) =L_{\{q_1, q_2\}} (k'_\infty)$.
Note that the prime of $k'_\infty$ lying above $q_1$ is totally ramified in 
$L_{\{q_1, q_2\}} (k'_\infty)$
(this follows from the fact that the prime of $k'$ lying above $q_1$ ramifies in $K'$ but 
does not ramify in $k'_\infty$).
Hence, if $X (K'_\infty)$ is not trivial, then there is a non-trivial 
unramified abelian $p$-extension of $L'/K'_\infty$ such that 
$L' \cap L_{\{q_1, q_2\}} (k') = K'_\infty$.
However, this contradicts the fact that 
$\mathcal{L}_{\{q_1, q_2\}} (k'_\infty) =L_{\{q_1, q_2\}} (k'_\infty)$.
Hence $X (K'_\infty)$ is trivial, and then the assertion follows.
\end{proof}

To show the assertion of Theorem \ref{thm_real_quad}, it is sufficient to show that 
the $\mathbb{Z}_p$-rank of $X_S (K'_\infty)$ is not so large.

\begin{prop}\label{no_free_prop}
Let the assumptions be as in Theorem \ref{thm_real_quad}, and 
$K'$ as in Definition \ref{def_KH}.
If 
\[ \rkzp X_S (K'_\infty) \leq p (r-2) \]
then $\mathcal{X}_S (k'_\infty)$ is not a free pro-$p$ group.
\end{prop}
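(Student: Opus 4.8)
The plan is to run the argument from the proof of Theorem~\ref{thm_im1} almost verbatim, with $K'_\infty$ in the role of $K_\infty$ and ``unramified outside $S$'' replacing ``unramified''. Assume, for contradiction, that $\mathcal{G} := \mathcal{X}_S(k'_\infty)$ is a free pro-$p$ group. By Lemma~\ref{lem_real1} its maximal abelian quotient is $\mathcal{G}^{ab} = X_S(k'_\infty) \cong \mathbb{Z}_p^{\oplus r-1}$, which is non-trivial since $r \geq 3$; hence $\mathcal{G}$ is finitely generated and, by Lemma~\ref{pro-ell}(i), $d(\mathcal{G}) = r-1 \geq 2$.

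Next I would identify $U := \mathcal{X}_S(K'_\infty)$ with the open subgroup $\Gal(\mathcal{L}_S(k'_\infty)/K'_\infty)$ of $\mathcal{G}$, of index $p = [K'_\infty : k'_\infty]$. For this one checks that $K'_\infty/k'_\infty$ is a degree-$p$ extension unramified outside $S(k'_\infty)$ and that $\mathcal{L}_S(K'_\infty) = \mathcal{L}_S(k'_\infty)$. The ramification statement is immediate from the facts recorded after Definition~\ref{def_KH}: $K' = L_{\{q_1,q_2\}}(k')$ is unramified outside $\{q_1,q_2\}$ over $k'$, and $p$ as well as $q_3,\dots,q_r$ split completely in $K'$, so every prime of $k'_\infty$ lying outside $S(k'_\infty)$ stays unramified in $K'_\infty/k'_\infty$. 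Consequently $K'_\infty \subseteq \mathcal{L}_S(k'_\infty)$, so $\mathcal{L}_S(k'_\infty)/K'_\infty$ is unramified outside $S(K'_\infty)$ and thus $\mathcal{L}_S(k'_\infty) \subseteq \mathcal{L}_S(K'_\infty)$; conversely $\mathcal{L}_S(K'_\infty)/k'_\infty$ is Galois, pro-$p$ (an extension of $\mathbb{Z}/p\mathbb{Z}$ by $\mathcal{X}_S(K'_\infty)$), and unramified outside $S(k'_\infty)$, hence $\mathcal{L}_S(K'_\infty) \subseteq \mathcal{L}_S(k'_\infty)$. Therefore the two fields coincide. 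This bookkeeping is the only point needing genuine care; everything else is formal.

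Finally I would apply Lemma~\ref{pro-ell}(i) to $U \subseteq \mathcal{G}$: $U$ is again free pro-$p$ and
\[ d(U) - 1 = (\mathcal{G} : U)\,\bigl(d(\mathcal{G}) - 1\bigr) = p\,(r-2), \]
so $d(U) = p(r-2)+1$. Since $U$ is free, $X_S(K'_\infty) = U^{ab}$ is a free $\mathbb{Z}_p$-module of rank $d(U)$, i.e.\ $\rkzp X_S(K'_\infty) = p(r-2)+1$. This contradicts the hypothesis $\rkzp X_S(K'_\infty) \leq p(r-2)$, and hence $\mathcal{X}_S(k'_\infty)$ cannot be a free pro-$p$ group. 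There is no serious obstacle in this step: the real work has already been done in setting up $K'$ (Definition~\ref{def_KH}) and in Lemmas~\ref{lem_real1} and~\ref{lem_real2}, and the proposition is precisely the short ``descent'' statement that the subsequent proof of Theorem~\ref{thm_real_quad} will invoke once the bound $\rkzp X_S(K'_\infty) \leq p(r-2)$ is established.
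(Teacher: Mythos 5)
Your argument is correct and is essentially the paper's own proof: both identify $U=\mathcal{X}_S(K'_\infty)$ as an index-$p$ open subgroup of $\mathcal{G}=\mathcal{X}_S(k'_\infty)$ via $\mathcal{L}_S(K'_\infty)=\mathcal{L}_S(k'_\infty)$ (using that $K'/k'$ is unramified outside $S(k')$), then apply Lemma \ref{pro-ell}(i) to get $\rkzp X_S(K'_\infty)=d(U)=p(r-2)+1$, contradicting the hypothesis. The extra bookkeeping you supply for the equality of the two maximal $S$-ramified pro-$p$ extensions is a correct elaboration of what the paper states in one line.
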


\begin{proof} 
Put $\mathcal{G} = \mathcal{X}_S (k'_\infty)$ and $U = \mathcal{X}_S (K'_\infty)$.
From the fact that $K'/k'$ is unramified outside $S(k')$, 
we see that $\mathcal{L}_S (K'_\infty) = \mathcal{L}_S (k'_\infty)$, and hence 
$U$ is an open subgroup of $\mathcal{G}$ of index $p$.

Assume that $\mathcal{G}$ is a free pro-$p$ group.
Since $X_S (k'_\infty) \cong \mathbb{Z}_p^{\oplus r-1}$, we see that $d (\mathcal{G}) = r-1$.
Hence, by using Lemma \ref{pro-ell} (i), we see that 
\[ \rkzp X (K'_\infty) = d(U) = p (r-2)+1. \] 
The assertion follows from this.
\end{proof}

\subsection{Proof of Theorem \ref{thm_real_quad}}
The following proposition is a key result to prove Theorem \ref{thm_real_quad}.

\begin{prop}\label{real_key}
Let the assumptions be as in Theorem \ref{thm_real_quad}, and 
$H^+$ as in Definition \ref{def_KH}.
Then, $\Gal (L_{\{ q_3 \}} (H^+_\infty) / L (H^+_\infty))$ is finite.
\end{prop}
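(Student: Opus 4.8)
The plan is to show that the "extra" ramification at the prime of $H^+_\infty$ above $q_3$ contributes only a bounded (in fact finite) amount to the Iwasawa module, and then conclude by a standard finiteness argument. First I would set up the basic Iwasawa-theoretic framework over $H^+_\infty$: since $H^+ = K'(\zeta_p + \zeta_p^{-1})$ is a totally real field, one knows $\mu(H^+) = 0$ by the Ferrero--Washington theorem, and $X(H^+_\infty)$ is a finitely generated torsion $\mathbb{Z}_p[[\Gamma]]$-module. I would first argue that $X(H^+_\infty)$ itself is finite: indeed, $H^+$ contains $K'$, and $X(K'_\infty)$ is trivial by Lemma \ref{lem_real2}; combined with the fact that $H^+/K'$ is a $p$-power-degree extension in which only primes above $p$ (and finitely many tame primes) ramify, together with Greenberg's conjecture being verifiable here via the splitting behavior at $p$ (the prime of $k'$ above $p$ splits completely in $K'$, so $p$ does not split completely in $H^+$ unless $[H^+:K']$ forces it — one must check this carefully). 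Even without full finiteness of $X(H^+_\infty)$, what is genuinely needed is that the cokernel of $X(H^+_\infty) \to X_{\{q_3\}}(H^+_\infty)$ is finite, i.e. that the inertia subgroup at the prime(s) above $q_3$ in $\Gal(L_{\{q_3\}}(H^+_\infty)/H^+_\infty)$ generates a submodule whose quotient $\Gal(L_{\{q_3\}}(H^+_\infty)/L(H^+_\infty))$ is finite.

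The key step is the analysis of the inertia at $q_3$. Because $q_3$ is inert in $k'$ and $\Gal(K'/\mathbb{Q})$ is dihedral of order $2p$, the prime of $k'$ above $q_3$ splits completely in $K'$; moreover $q_3 \equiv -1 \pmod p$ and $q_3^2 \not\equiv 1 \pmod{p^2}$ guarantee that $q_3$ does not split in $\mathbb{B}_\infty/\mathbb{Q}$, so the primes of $K'_\infty$ above $q_3$ are non-split in $K'_\infty/K'$. Passing to $H^+_\infty$ (which contains $\zeta_p + \zeta_p^{-1}$, hence has ramification/splitting at $q_3$ governed by the residue degree of $q_3$ modulo $p$), I would compute the local behavior: the decomposition group at a prime $w$ of $H^+_\infty$ above $q_3$, and in particular show the inertia subgroup $I(H^+_\infty, w)$ inside $\Gal(L_{\{q_3\}}(H^+_\infty)/H^+_\infty)$ is (pro)cyclic — isomorphic to $\mathbb{Z}_p$ or finite — since $q_3 \neq p$. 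The condition $q_3^2 \not\equiv 1 \pmod{p^2}$ is precisely what bounds the order of the tame inertia image, and the non-splitting of $q_3$ in the cyclotomic tower means the number of primes above $q_3$ stays bounded. Thus the submodule generated by all these inertia subgroups has bounded $\mathbb{Z}_p$-rank, and in fact I expect it has $\mathbb{Z}_p$-rank equal to the corank contributed, so that the unramified quotient $\Gal(L(H^+_\infty)/H^+_\infty)$ differs from $\Gal(L_{\{q_3\}}(H^+_\infty)/H^+_\infty)$ by a module of $\mathbb{Z}_p$-rank zero.

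Putting it together: $\Gal(L_{\{q_3\}}(H^+_\infty)/L(H^+_\infty))$ is a quotient of the $\mathbb{Z}_p[[\Gamma]]$-submodule generated by the inertia subgroups at the primes above $q_3$, which is a finitely generated $\mathbb{Z}_p$-module of rank at most one (per prime above $q_3$, finitely many), hence this Galois group is a finitely generated $\mathbb{Z}_p$-module; to upgrade "finitely generated $\mathbb{Z}_p$-module" to "finite" I would invoke that $L_{\{q_3\}}(H^+_\infty)/L(H^+_\infty)$ is ramified \emph{everywhere} at the primes above $q_3$ over a cyclotomic $\mathbb{Z}_p$-extension base, so any free $\mathbb{Z}_p$-part would force, via the argument in the style of \cite[Lemma 3.3]{Kat} or a direct genus-theoretic count, an unbounded unramified module contradicting $X(K'_\infty) = 0$ and the near-triviality of $X(H^+_\infty)$.

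The main obstacle I anticipate is the passage from $K'$ to $H^+$: one must control $X(H^+_\infty)$ (or at least its corank) even though $H^+$ is not of the dihedral-over-$\mathbb{Q}$ type that makes $K'$ tractable, and one must pin down exactly how the prime $q_3$ decomposes in $H^+_\infty$ — the residue degree of $q_3$ modulo $p$ interacts with both the dihedral structure of $\Gal(K'/\mathbb{Q})$ and the cyclotomic field $\mathbb{Q}(\zeta_p+\zeta_p^{-1})$, and getting the count of primes above $q_3$ and the precise tame inertia order right (using $q_3 \equiv -1$, $q_3^2 \not\equiv 1 \pmod{p^2}$) is the delicate bookkeeping on which the finiteness hinges.
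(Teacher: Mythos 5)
Your reduction to the submodule generated by the inertia subgroups at the primes above $q_3$ is correct as far as it goes: $\Gal(L_{\{q_3\}}(H^+_\infty)/L(H^+_\infty))$ is a quotient of that submodule, each inertia subgroup is procyclic, and the number of primes above $q_3$ stays bounded in the tower because $q_3$ does not split in $\mathbb{B}_\infty/\mathbb{Q}$. But this only shows the group is a finitely generated $\mathbb{Z}_p$-module of rank at most $m=p(p-1)/2$ (there are $m$ primes of $H^+$ above $q_3$, each non-split in $H^+_\infty/H^+$ and each with tame inertia isomorphic to $\mathbb{Z}_p$, since $q_3^2\equiv 1\pmod p$). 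The step where you upgrade ``finitely generated'' to ``finite'' is a genuine gap: it is simply not true that a positive $\mathbb{Z}_p$-rank of a tamely ramified Iwasawa module would force the unramified module to be infinite. The paper's own Lemma \ref{lem_real1} refutes that heuristic: $X(k'_\infty)$ is trivial while $X_S(k'_\infty)\cong\mathbb{Z}_p^{\oplus r-1}$. So neither $X(K'_\infty)=0$ nor a genus-theoretic count of ramified primes can kill the free part of the $q_3$-inertia. (Your side remark that $X(H^+_\infty)$ might be shown finite is also unavailable: that is Greenberg's conjecture for $H^+$, which the paper neither assumes nor proves; it works with $L(H^+_\infty)$ without controlling its size.)

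The missing idea --- and the real content of the proposition --- is a transcendence-plus-duality argument showing that the $q_3$-inertia is absorbed into the part of the $\{p,q_3\}$-ramified Iwasawa module already filled by $p$-ramification. The paper fixes a prime $\mathfrak{Q}$ of $H$ above $q_3$ with $\mathfrak{Q}^h=(\alpha)$, forms $\beta_i=\alpha^{\sigma_i}/\alpha^{\sigma_i\iota}$ and the Kummer extension $N=\bigcup_n H_\infty(\sqrt[p^n]{\beta_1},\ldots,\sqrt[p^n]{\beta_m})$, and uses Baker--Brumer's theorem on the $p$-adic independence of logarithms to show that the inertia at the primes above $p$ in $\Gal(N/H_\infty)$ has $\mathbb{Z}_p$-rank exactly $m$; this yields a $\mathbb{Z}_p^{\oplus m}$-subextension $N^+/H^+_\infty$ inside the $(T-p)$-part that is already $p$-ramified of full rank. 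On the other side, Kummer duality with $X(H_\infty)^-$ (using $\mu(H)=0$ and the fact that no prime above $p$ splits in $H_\infty/H^+_\infty$) gives that $(T-p)^m h(T)$ generates $\charideal\,\mathfrak{X}_{\{p,q_3\}}(H^+_\infty)$ with $h(T)$ prime to $T-p$, so the relevant quotient $M'/L(H^+_\infty)$ has $\mathbb{Z}_p$-rank exactly $m$. Comparing the two forces the $q_3$-inertia to contribute nothing free, whence $L_{\{q_3\}}(H^+_\infty)/L(H^+_\infty)$ is finite. Without some substitute for this Baker--Brumer input, your argument cannot close.
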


\begin{proof}
Our proof of this proposition is almost same as that of 
a special case of \cite[Theorem 1.1]{Itoh14}
(see also the proof of \cite[Theorem 3.1]{IMO}).
Hence, for the most part of this proof, 
we will state the outline only.

However, since $H^+$ is not an abelian extension over $\mathbb{Q}$, 
we must be careful in some places.
In the proof of \cite[Theorem 1.1]{Itoh14}, the following are major places 
in which the assumption that the base field is a real abelian field is used.
\begin{itemize}
\item Calculation of the $\mathbb{Z}_p$-rank of the subgroup generated by 
the inertia subgroups of the Galois group of a certain Kummer extension 
(see \cite[Proposition 3.3]{Itoh14}).
\item Estimation of (a part of) the characteristic ideal of the ``$p$-ramified Iwasawa module''  
(see \cite[pp.521--522]{Itoh14}).
\end{itemize}
However, for our case, we can also handle these places.

Let $H$ be as in Definition \ref{def_KH}.
Take a topological generator $\gamma$ of $\Gal (H_\infty / H)$ 
which satisfies $\zeta_{p^{n}}^\gamma = \zeta_{p^n}^{1+p}$ for all $n$.
(We will use the same symbol $\gamma$ for the restriction of $\gamma$ to $H^+_\infty$.)
We also fix an isomorphism from $\mathbb{Z}_p [[ \Gal (H_\infty / H) ]]$ 
to $\Lambda = \mathbb{Z}_p [[T]]$ satisfying $\gamma \mapsto 1+T$, 
and we regard $\mathbb{Z}_p [[ \Gal (H_\infty / H) ]]$-modules 
(or $\mathbb{Z}_p [[ \Gal (H^+_\infty / H^+) ]]$-modules) as 
$\Lambda$-modules via this isomorphism.
For a finitely generated torsion $\Lambda$-module $Y$, we denote 
by $\charideal Y$ its characteristic ideal.

We put $m = p(p-1)/2$ ($=$ the degree of $H^+/k'$).
Hence the degree of $H/k'$ is $2m$.
From the assumptions on $q_3$, we see that  
the unique prime of $k'$ lying above $q_3$ splits completely in $H$.
(Moreover, every prime lying above $q_3$ does not split in $H_\infty /H$.)
We also note that there are $p$ primes of $H$ lying above $p$, 
and denote by $\mathfrak{P}_1, \ldots, \mathfrak{P}_p$ these primes.
For $i = 1, \ldots, p$, 
let $\mathcal{U}_{\mathfrak{P}_i}^1 (H)$ be the group of principal units of $H_{\mathfrak{P}_i}$ 
(see also the proof Theorem \ref{thm_im2} given in Section \ref{proof_thm_im2}).
We put $\mathcal{U}^1 = \prod_{i=1}^p \mathcal{U}_{\mathfrak{P}_i}^1 (H)$.
Then the $\mathbb{Z}_p$-rank of $\mathcal{U}^1$ is $2 p (p-1) = 4m$.

Fix a prime $\mathfrak{Q}$ of $H$ which is lying above $q_3$.
We can take a positive integer $h$ such that 
$\mathfrak{Q}^h$ is a principal ideal generated by $\alpha$ 
(where $\alpha$ is an algebraic integer of $H$), and 
$\alpha \equiv 1 \pmod{\mathfrak{P}_i}$ for all $\mathfrak{P}_i$ ($i=1, \ldots, p$).
Let $\iota$ be the complex conjugation. 
Take elements $\sigma_1, \ldots, \sigma_{m}$ 
of $\Gal (H/k')$ which represents $\Gal (H/k')$ modulo $\langle \iota \rangle$.
We define a subgroup $A$ of the multiplicative group $H^\times$ generated by  
\[ \alpha^{\sigma_1}, \alpha^{\sigma_1 \iota}, \ldots, 
\alpha^{\sigma_m}, \alpha^{\sigma_m \iota}. \]
We see that $A$ is free of rank $2m$ as a $\mathbb{Z}$-module,
because the unique prime of $k'$ lying above $q_3$ splits completely in $H$.

We put $\beta_i = \alpha^{\sigma_i}/\alpha^{\sigma_i \iota}$ and 
\[ N = \bigcup_{n \geq 1} H_\infty (\sqrt[p^n]{\beta_1}, 
\ldots, \sqrt[p^n]{\beta_m}). \]
Then $N$ is a Galois extension over $H^+$, and an abelian extension over $H^+_\infty$.
Let $\mathcal{I}$ be the subgroup of $\Gal (N/H_\infty)$ which is generated by the inertia 
subgroups for the primes lying above $p$.
We would like to show that $\rkzp \mathcal{I} = m$
by using \cite[Lemma 2.5]{Kha-Win} (see also \cite{IMO}).
To see this, it is sufficient to show that 
the $\mathbb{Z}_p$-rank of $\mathcal{A}$ is $2m$, 
where $\mathcal{A}$ is the closure of the image of the diagonal embedding 
$A \to \mathcal{U}^1$.
In \cite{Itoh14}, the assumption that the base field is abelian over $\mathbb{Q}$ is 
used to prove a similar result (see also \cite{IMO}).
However, in our case, we only consider the $\Gal (H/k')$-conjugates of $\alpha$, and
$H/k'$ is an abelian extension.
Thanks to these facts, we can imitate the argument (using Baker-Brumer's theorem \cite{Brumer}) 
given in \cite[p.1498]{IMO}, \cite[pp.519--520]{Itoh14}.
(Note that we consider the matrix 
$( \log_p (\alpha^{\sigma \tau^{-1}}))_{\sigma, \tau \in \Gal (H/k')}$ in our case.
See also \cite[\S 5.6]{Was}.)
Thus we see that $\rkzp \mathcal{A} = 2m$, and hence 
\begin{equation}\label{eq_rankI}
\rkzp \mathcal{I} = m.
\end{equation}
Let $N^+$ be the unique intermediate field of $N/H^+_\infty$ such that 
$\Gal (N^+/H^+_\infty) \cong \mathbb{Z}_p^{\oplus m}$.
From the above facts, $N^+/H^+_\infty$ is unramified outside the primes lying above $p$ or $q_3$, 
and $\gamma$ acts on $x \in \Gal (N^+/H^+_\infty)$ as $x^{\gamma} = x^{1+p}$.

Let $M_{\{ p, q_3 \}} (H^+_\infty)/ H^+_\infty$ (resp. $M_{\{ p \}} (H^+_\infty)/ H^+_\infty$)
be the maximal abelian pro-$p$ extension 
which is unramified outside the primes lying above $p$ or $q_3$ (resp. $p$). 
Put 
\[ \mathfrak{X}_{\{ p, q_3 \}} (H^+_\infty) = \Gal (M_{\{ p, q_3 \}} (H^+_\infty)/ H^+_\infty) 
\quad \text{and} \quad \mathfrak{X}_{\{ p\}} (H^+_\infty) = 
\Gal (M_{\{ p \}} (H^+_\infty)/ H^+_\infty). \]
Since $H^+$ is totally real, both 
$\mathfrak{X}_{\{ p, q_3 \}} (H^+_\infty)$ and $\mathfrak{X}_{\{ p \}} (H^+_\infty)$ are 
finitely generated torsion $\Lambda$-modules 
(see, e.g., \cite[(11.3.2)Theorem (iv)]{NSW}).

We shall consider $\charideal \mathfrak{X}_{\{ p, q_3 \}} (H^+_\infty)$ 
(in \cite{Itoh14}, the assumption that the base field is a real abelian field is 
used in this process).
Before this, we consider $X (H_\infty)^- = (\iota - 1) X (H_\infty)$ 
(the usual minus part of the unramified Iwasawa module for a CM field).
Note that $k' (\zeta_p)$ is an abelian extension over $\mathbb{Q}$, and 
$H$ is a cyclic extension of degree $p$ over $k' (\zeta_p)$.
Then, by using Ferrero-Washington's theorem \cite{F-W} and 
Iwasawa's theorem \cite[Theorem 3]{Iwa73mu} (see also Section \ref{sub_Settings}), 
we see that $\mu (H)=0$.
Take $g(T) \in \Lambda$ which generates $\charideal X (H_\infty)^-$.
We note the important fact that there is no 
prime lying above $p$ which splits in $H_\infty/H^+_\infty$ 
(because $p$ does not ramify in $K'/\mathbb{Q}$).
This implies that $g(T)$ is relatively prime to $T$. 
(We remark that $H$ is not abelian over $\mathbb{Q}$, however, 
this assertion holds.
See, e.g., \cite{F-G}.)
Put $h(T) = g ( \frac{1+p}{1+T}-1) \in \Lambda$.
Then, by Kummer duality, we see that 
$h(T)$ generates $\charideal \mathfrak{X}_{\{ p \}} (H^+_\infty)$.
(Note that we can see that 
\[ \mathfrak{X}_{\{ p \}} (H^+_\infty) \cong 
\mathop{\mathrm{Hom}_{\mathbb{Z}_p}} (A(H_\infty)^-, W_{p^\infty}), \]
where $A(H_\infty)^-$ is the minus part of $\varinjlim A(H_n)$ and 
$W_{p^\infty}$ is the group of all $p$-power roots of unity.
Moreover, from a well known argument using the adjoint, 
we can obtain this fact.
See, e.g., \cite[Chapters 13, 15]{Was}.)
Hence we see that $\mathfrak{X}_{\{ p \}} (H^+_\infty)$ is finitely generated 
as a $\mathbb{Z}_p$-module, and $h(T)$ is relatively prime to $T-p$.
By using \cite[(11.3.5)Theorem]{NSW} (see also its proof), 
we can see that 
\begin{equation}\label{eq_charideal}
(T-p)^{m}\, h(T) \;\; \text{generates} \;\; \charideal \mathfrak{X}_{\{ p, q_3 \}} (H^+_\infty).
\end{equation} 
(Note that there are $m$ primes in $H^+$ lying above $q_3$, 
and they does not split in $H^+_\infty$. 
Actually, we can know the structure of $\mathfrak{X}_{\{ p, q_3 \}} (H^+_\infty)$ 
more precisely.)

The remaining part of the proof is similar to that given in \cite[pp.522--525]{Itoh14} 
(for ``Case NS'').
Let $M'$ be the intermediate field of $M_{\{ p, q_3 \}} (H^+_\infty)/L(H^+_\infty)$ 
corresponding to  
\[ (T-p) \, \Gal(M_{\{ p, q_3 \}} (H^+_\infty)/L(H^+_\infty)). \]
We see that both $L_{\{ q_3 \}} (H^+_\infty)$ and
$L(H^+_\infty) N^+$ are intermediate fields of $M'/L(H^+_\infty)$.
By using (\ref{eq_charideal}), we can show that $\rkzp \Gal (M'/L(H^+_\infty)) =m$.
Let $\mathcal{I}'$ be the subgroup of $\Gal (M'/H^+_\infty)$ 
generated by the inertia subgroups for the primes lying above $p$.
Then we can see $\rkzp \mathcal{I}' = m$ by using (\ref{eq_rankI}). 
Hence $L_{\{ q_3 \}} (H^+_\infty) /L (H^+_\infty)$ is a finite extension.
\end{proof}

\begin{cor}\label{key_cor}
Let the assumptions be as in Theorem \ref{thm_real_quad}, and 
$K'$ as in Definition \ref{def_KH}.
Then, $X_{\{ q_3 \}} (K'_\infty)$ is finite.
\end{cor}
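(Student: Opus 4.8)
The plan is to descend the conclusion of Proposition \ref{real_key} from $H^+_\infty$ down to $K'_\infty$, since $K'_\infty \subset H^+_\infty$ with $[H^+_\infty : K'_\infty] = (p-1)/2$, a degree prime to $p$. First I would observe that $H^+ = K'(\zeta_p + \zeta_p^{-1})$, so $\Gal(H^+/K')$ is cyclic of order $(p-1)/2$, and the corresponding statement $\Gal(H^+_\infty/K'_\infty) \cong \Gal(H^+/K')$ holds because $K' \cap \mathbb{Q}(\zeta_p) = \mathbb{Q}$ (as $K'$ is unramified outside $\{q_1, q_2\}$ over $\mathbb{Q}$, hence unramified at $p$). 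Proposition \ref{real_key} tells us that $\Gal(L_{\{q_3\}}(H^+_\infty)/L(H^+_\infty))$ is finite; by Lemma \ref{lem_real2} (or rather, a version of it for $H^+$), together with the fact that the analogous unramified Iwasawa module statement applies, the relevant point is that $X_{\{q_3\}}(H^+_\infty)$, which contains $\Gal(L_{\{q_3\}}(H^+_\infty)/L(H^+_\infty))$ as the subgroup corresponding to the unramified-at-$q_3$ part, is finite modulo the unramified part $X(H^+_\infty)$. Since $H^+$ is totally real, Greenberg's conjecture is not known in general; but actually Proposition \ref{real_key} combined with finiteness of the "extra" part gives exactly that $X_{\{q_3\}}(H^+_\infty)$ is finite if and only if $X(H^+_\infty)$ is, and here we can sidestep this: what we really need is only that $X_{\{q_3\}}(K'_\infty)$ is finite, and by the norm/restriction argument below this will follow directly from finiteness of $\Gal(L_{\{q_3\}}(H^+_\infty)/L(H^+_\infty))$ together with finiteness of $X(K'_\infty)$, which is Lemma \ref{lem_real2}.

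The key step is the transfer argument. Since $[K'_\infty : k'_\infty] = p$ and $K'$ is totally real with $X(K'_\infty)$ trivial (Lemma \ref{lem_real2}), and $[H^+_\infty : K'_\infty] = (p-1)/2$ is prime to $p$, restriction gives a map $X_{\{q_3\}}(K'_\infty) \to X_{\{q_3\}}(H^+_\infty)$ whose kernel and cokernel are controlled by the (trivial-order-prime-to-$p$) cohomology of $\Gal(H^+_\infty/K'_\infty)$ acting on the relevant modules; more precisely, the transfer (Verlagerung) composed with restriction is multiplication by $(p-1)/2$ on $X_{\{q_3\}}(K'_\infty)$, which is an automorphism since these are pro-$p$ groups. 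Hence $X_{\{q_3\}}(K'_\infty)$ injects (up to the prime-to-$p$ issues, which vanish on pro-$p$ modules) into $X_{\{q_3\}}(H^+_\infty)^{\Gal(H^+_\infty/K'_\infty)}$. Then, using the exact sequence relating $X_{\{q_3\}}(H^+_\infty)$ to $X(H^+_\infty)$ via $\Gal(L_{\{q_3\}}(H^+_\infty)/L(H^+_\infty))$, finiteness of the latter (Proposition \ref{real_key}) together with finiteness of $X(K'_\infty)$ forces $X_{\{q_3\}}(K'_\infty)$ to be finite. Concretely: $L_{\{q_3\}}(K'_\infty) \subseteq L_{\{q_3\}}(H^+_\infty)$, and $L_{\{q_3\}}(K'_\infty) \cdot L(H^+_\infty) / L(H^+_\infty)$ is a quotient of $\Gal(L_{\{q_3\}}(H^+_\infty)/L(H^+_\infty))$, hence finite; combined with $X(K'_\infty)$ trivial and $[L(H^+_\infty) : L(K'_\infty)]$ finite, we get $X_{\{q_3\}}(K'_\infty)$ finite.

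The main obstacle I expect is making the descent from $H^+_\infty$ to $K'_\infty$ fully rigorous at the level of Iwasawa modules rather than just Galois groups of finite-degree pieces — in particular, verifying that passing to the subfield $K'_\infty$ does not introduce new ramification at primes above $q_3$ or above $p$ that would spoil the comparison, and checking that the prime-to-$p$ index $(p-1)/2$ genuinely lets one ignore the fixed-point/coinvariant discrepancy. Once one notes that $H^+_\infty/K'_\infty$ is unramified at $q_3$ (since $q_3$ splits completely in $H$, hence in $H^+$, over $k'$, and the ramification in the cyclotomic direction is the same for both fields) and that every prime above $p$ in $K'_\infty$ is totally ramified, the comparison is clean, and the corollary follows.
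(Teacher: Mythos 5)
Your overall route coincides with the paper's: obtain finiteness of $\Gal (L_{\{ q_3 \}} (H^+_\infty) / L (H^+_\infty))$ from Proposition \ref{real_key}, descend from $H^+_\infty$ to $K'_\infty$ using that $[H^+_\infty : K'_\infty]=(p-1)/2$ is prime to $p$, and conclude with the triviality of $X(K'_\infty)$ from Lemma \ref{lem_real2} (the paper outsources the descent step to \cite[Lemma 2.1]{Itoh14}). However, your concrete execution of the descent fails at the decisive point. In your last sentence you invoke the finiteness of $[L(H^+_\infty) : L(K'_\infty)]$. Since $L(K'_\infty)=K'_\infty$ by Lemma \ref{lem_real2}, this is precisely the assertion that $X(H^+_\infty)$ is finite, i.e.\ Greenberg's conjecture for the totally real field $H^+$ --- exactly the statement you correctly announced in your first paragraph that you would have to sidestep, and which is not available here. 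Proposition \ref{real_key} controls $X_{\{q_3\}}(H^+_\infty)$ only \emph{modulo} $X(H^+_\infty)$, so bounding $\Gal\bigl(\bigl(L_{\{q_3\}}(K'_\infty)\cap L(H^+_\infty)\bigr)/K'_\infty\bigr)$ by the full group $\Gal(L(H^+_\infty)/K'_\infty)$ proves nothing; as written the argument is circular at this step.

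The gap is repairable with the ingredients you already assembled, provided the prime-to-$p$ reduction is applied to $X(H^+_\infty)$ itself rather than asserted to be unnecessary. Put $G=\Gal(H^+_\infty/K'_\infty)$ and $N=L_{\{q_3\}}(K'_\infty)\cap L(H^+_\infty)$. Your observation that $\Gal(L_{\{q_3\}}(K'_\infty)L(H^+_\infty)/L(H^+_\infty))\cong \Gal(L_{\{q_3\}}(K'_\infty)/N)$ is a quotient of the finite group $\Gal (L_{\{ q_3 \}} (H^+_\infty) / L (H^+_\infty))$ is sound; what remains is to show $N=K'_\infty$. Now $NH^+_\infty/H^+_\infty$ is an unramified abelian pro-$p$ extension, and $NH^+_\infty/K'_\infty$ is abelian (a compositum of two abelian extensions of $K'_\infty$), so $G$ acts trivially on $\Gal(NH^+_\infty/H^+_\infty)$; hence this group is a quotient of $X(H^+_\infty)_G\cong X(H^+_\infty)^G$, which is isomorphic to $X(K'_\infty)=0$ because extension-of-ideals followed by the norm is multiplication by $|G|$, a unit on pro-$p$ modules. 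Thus $N\subseteq H^+_\infty$, and since $N/K'_\infty$ is pro-$p$ while $[H^+_\infty:K'_\infty]$ is prime to $p$, we get $N=K'_\infty$ and the corollary follows. In short: only the trivial $G$-eigenspace of $X(H^+_\infty)$ is relevant, and that eigenspace is trivial; the full module $X(H^+_\infty)$, whose finiteness you implicitly assumed, need not be controlled at all.
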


\begin{proof}
Let $H^+$ be as in Definition \ref{def_KH}.
We see that 
$\Gal (L_{\{ q_3 \}} (H^+_\infty) /L (H^+_\infty))$ is finite 
by Proposition \ref{real_key}.
Then we can show that 
$\Gal (L_{\{ q_3 \}} (K'_\infty) /L (K'_\infty))$ is also finite 
(see, e.g., \cite[Lemma 2.1]{Itoh14}).
By Lemma \ref{lem_real2}, we see that 
$\Gal (L (K'_\infty) / K'_\infty)$ is trivial.
Thus the assertion follows.
\end{proof}

\begin{rem}
In the last line of the proof of \cite[Lemma 2.1]{Itoh14}, 
``$\Gal (M_q (k'_n)/L(k'_n))$'' should be read ``$\Gal (M_q (k_n)/L(k_n))$''.
\end{rem}

Now we shall finish the proof of Theorem \ref{thm_real_quad}.

\begin{proof}[Proof of Theorem \ref{thm_real_quad}] 
Let $K'$ be as in Definition \ref{def_KH}.
We shall give an upper bound of $\rkzp X_{S} (K'_\infty)$ by using 
(a rough version of) the known method (see, e.g., \cite{M-O13}, \cite{IMO}).

For $\mathfrak{q} \in S(K')$, we put 
$R_n (\mathfrak{q}) = 
(O_{K'_n} / \mathfrak{q}_{n})^\times \otimes_{\mathbb{Z}} \mathbb{Z}_p$, where 
$\mathfrak{q}_{n}$ is the extension of $\mathfrak{q}$ in $K'_n$.
We also put $R_\infty (\mathfrak{q}) = \varprojlim R_n (\mathfrak{q})$.
Since $|(O_{K'} / \mathfrak{q})^\times|$ is divisible by $p$ and 
$\mathfrak{q}$ is not decomposed in $K'_\infty$, 
we see that $R_\infty (\mathfrak{q}) \cong \mathbb{Z}_p$ (as a $\mathbb{Z}_p$-module)
for every $\mathfrak{q} \in S(K')$.

Recall that there is only one prime of $K'$ lying above $q_i$ (for $i=1,2$), 
and there are $p$ primes of $K'$ lying above $q_j$ (for $j=3, \ldots, r$).
We put 
\[ R_\infty (S) = \bigoplus_{\mathfrak{q} \in S(K')} R_\infty (\mathfrak{q})
\quad \text{and} \quad 
R_\infty (q_3) = \bigoplus_{\mathfrak{q} \in S(K'), \;  
\mathfrak{q} | q_3} R_\infty (\mathfrak{q}). \]
By using class field theory (and Lemma \ref{lem_real2}), 
we obtain the following exact sequences 
\[ \varprojlim (E (K'_n) \otimes_{\mathbb{Z}} \mathbb{Z}_p) \overset{f_1}{\to} 
R_\infty (S) \to X_{S} (K'_\infty) \to 0, \]
\[ \varprojlim (E (K'_n) \otimes_{\mathbb{Z}} \mathbb{Z}_p) \overset{f_2}{\to} 
R_\infty (q_3) \to X_{\{ q_3 \}} (K'_\infty) \to 0. \]
By Corollary \ref{key_cor}, $X_{\{ q_3 \}} (K'_\infty)$ is finite.
This implies that the $\mathbb{Z}_p$-rank of the image of $f_2$ is $p$ 
(recall that $\rkzp R_\infty (q_3) = p$). 
Then, we see that the $\mathbb{Z}_p$-rank of the image of $f_1$ is 
greater than or equal to $p$.
Since 
\[ \rkzp R_\infty (S) = p (r-2) + 2, \]
we have obtained the inequality 
\[ \rkzp X_{S} (K'_\infty) \leq p (r-2) + 2 - p. \]
Then Theorem \ref{thm_real_quad} follows from Proposition \ref{no_free_prop}.
\end{proof}

\begin{acknowledgements}
The author would express his thanks to Manabu Ozaki for giving motivation and advice.
The author also express thanks to Yutaka Konomi for giving information around his paper \cite{Kono}.
This work was partly supported by JSPS KAKENHI Grant Number JP15K04791.
\end{acknowledgements}

\begin{noteIT}
As mentioned in \cite{F-I}, there is a problem in the programs used 
in the computations of \cite[Section 5]{I-T}.
We explain the details.
The Magma command \texttt{RayClassGroup} returns a ``guaranteed'' result only when 
the ideal class group was computed in advance.
See \cite[Chapter 40]{MagmaHB}.
However, the process of computing the ideal class group in advance   
was omitted in the programs used in the research of \cite{I-T} 
(as was confirmed by the author himself).
Hence, it has to be said 
that the computational results given \cite[Section 5]{I-T} were 
``not guaranteed by Magma'' at the time when that paper was published.

Thus, the author re-computed the ray class groups by using a method to obtain 
results guaranteed by Magma.
(The author used the same defining polynomials of number fields 
which were used in the original programs.
Hence, the defining polynomials given in \cite{K-O}, \cite{Brink} 
were also used in this re-computation.)
Consequently, it was checked that the values given in the tables of \cite{I-T} 
coincide with the values obtained by this re-computation.
(Hence we do not need a correction of the values.)

Thanks to Shun'ichi Yokoyama and the 
Magma staff (especially Nicole Sutherland) for answering questions and giving advice.
The author wrote this note with the consent of the co-author of \cite{I-T}.
\end{noteIT}

\bigskip

\begin{flushleft}
Tsuyoshi Itoh \\
Division of Mathematics, 
Education Center,
Faculty of Social Systems Science, \\
Chiba Institute of Technology, \\
2--1--1 Shibazono, Narashino, Chiba, 275--0023, Japan \\
e-mail : \texttt{tsuyoshi.itoh@it-chiba.ac.jp}

\end{flushleft}

\end{document}